\begin{document}
 \thispagestyle{empty}

\title[Noetherian pointed Hopf algebras are affine]
{Noetherian pointed Hopf algebras are affine}

\author{
{Huan Jia $^{1}$}
and 
{Yinhuo Zhang $^{2}$}}

\address{$^{1}$ Department of Mathematics, Suqian Unviersity, Suqian City 223800, Jiangsu Province, China}

\address{$^{2}$ Department of Mathematics and Statistics,
University of Hasselt, Universitaire Campus, 3590 Diepenbeek, Belgium}

\address{Huan Jia $^{1}$}
\email{huan.jia@squ.edu.cn}

\address{Yinhuo Zhang $^{2}$}
\email{yinhuo.zhang@uhasselt.be}
 
\subjclass[2020]{16T05; 16P40; 16S15; 68R15}
\date{\today}
\maketitle

\newtheorem{theorem}{Theorem}[section]
\newtheorem{proposition}[theorem]{Proposition}
\newtheorem{lemma}[theorem]{Lemma}
\newtheorem{corollary}[theorem]{Corollary}
\theoremstyle{definition}
\newtheorem{definition}[theorem]{Definition}
\newtheorem{example}[theorem]{Example}
\newtheorem{remark}[theorem]{Remark}

\newcommand{\K}{\mathds{k}}
\newcommand{\Z}{\mathbb{Z}}

\newcommand{\N}{\mathds{N}}
\newcommand{\Pp}{\mathcal{P}}
\newcommand{\X}{\langle X \rangle}
\newcommand{\Y}{\langle Y \rangle}

\newcommand{\id}{\operatorname{id}}
\newcommand{\Char}{\operatorname{char}}
\newcommand{\gr}{\operatorname{gr}}
\newcommand{\ord}{\operatorname{ord}}
\newcommand{\corad}{\operatorname{corad}}
\newcommand{\lex}{\operatorname{lex}}
\newcommand{\LW}{\operatorname{LW}}
\newcommand{\G}{\operatorname{G}}

\newcommand{\R}{\operatorname{r}}
\newcommand{\rf}{\operatorname{rf}}
\newcommand{\irr}{\operatorname{irr}}
\newcommand{\ol}{o}

\theoremstyle{plain}
\newcounter{maint}
\renewcommand{\themaint}{\Alph{maint}}
\newtheorem{mainthm}[maint]{Theorem}

\theoremstyle{plain}
\newtheorem*{proofthma}{Proof of Theorem A}
\newtheorem*{proofthmb}{Proof of Theorem B}

\begin{abstract}
Let $\K$ be a field. In this paper, we introduce the notions of \emph{reduction order} and \emph{reduction-factorization} on words, and use them to show that any right or left Noetherian pointed Hopf algebra over $\K$ is affine. This result offers a partial affirmative answer to the classical affineness question for Noetherian Hopf algebras posed by Wu and Zhang \cite{WZ2003}. 
For a pointed Hopf algebra $H$ over $\K$, we construct a well-ordered set $X$ such that:
(1) $H$ is generated, as an algebra, by the subset $X_I$ of irreducible letters (with respect to the reduction order); and
(2) $X_I$ is finite whenever $H$ is right Noetherian.
\vskip 10pt
\noindent {Keywords: \textit{reduction order, reduction-factorization, pointed Hopf algebra, Noetherian, affine}} 
\end{abstract}

\section*{Introduction}
Over the past three decades, substantial progress has been made toward understanding of infinite-dimensional Hopf algebras that satisfy various natural finiteness conditions---such as being affine Noetherian, having finite Gelfand-Kirillov dimension, being Artin-Schelter regular, or possessing the Calabi-Yau property; see, for instance, \cite{B1998, B2007, BG2014, G2013, BZ2020, A2023}.

In 2003, Wu and Zhang established that every affine Noetherian PI Hopf algebra is Artin-Schelter Gorenstein, and they proposed the following classical affineness question concerning Noetherian Hopf algebras:

\noindent
\textbf{Question.  \cite{WZ2003}} 
\textit{Is every Noetherian Hopf algebra over $\K$ an affine $\K$-algebra? }

To date, progress on this question remains limited. In 1975, Molnar \cite{M1975} proved the conjecture in the commutative and cocommutative cases:
\begin{itemize}
\item[(a)] A commutative Hopf algebra is Noetherian if and only if it is affine.
\item[(b)] A cocommutative Noetherian Hopf algebra is affine.
\end{itemize}
Beyond these foundational cases, the general question is still open---even for PI Hopf algebras, which are close to the commutative setting (see \cite{BG2014, G2013}). Significant partial results exist: over an algebraically closed field of characteristic zero, Goodearl and Zhang proved that a locally affine Noetherian Hopf algebra is affine if it is faithfully flat over any Hopf subalgebra \cite[Thm. 1.5(2)]{GZ2017}. This result implies that a Noetherian pointed Hopf algebra over an algebraically closed field of characteristic zero is affine.

In the graded setting, recent progress by Jia and Zhang \cite{JZ2025} shows that a Noetherian Hopf algebra graded as an algebra is affine whenever its degree-zero part is affine.  However, extending such results to non-graded Hopf algebras is significantly more challenging. A key obstacle is that the converse of the following classical lemma fails in general, even for Hopf algebras:

\noindent
\textbf{Lemma \cite{MR2001}} 
\textit{Let $R$ be a filtered ring. If the associated graded ring $\gr R$ of $R$ is Noetherian, then $R$ is Noetherian.}

This limitation motivates the development of new tools for addressing the affineness question in non-graded settings.
Pointed and connected Hopf algebras play a central role in this direction, as they are essential in the structure and classification theory of infinite-dimensional Hopf algebras; see \cite{Z2013, WZZ2015, WZZ2016, A2021, BGZ2019, ZSL2020, Z2024}.
Building on this framework, we introduce the notions of reduction order, reduction-factorization, and prime words.  Using these new tools, we are able to show that every right or left Noetherian pointed Hopf algebra is affine over an arbitrary field.   This result generalizes the previous work by Goodearl and Zhang, which established the same conclusion over an algebraically closed field of characteristic zero \cite[Thm. 1.5(3)]{GZ2017}.

The paper is organized as follows. Let $(X,\prec)$ be a well-ordered set, $\X$ the free monoid on $X$, and $\K\X$ the free algebra on $X$. In Section \ref{sec:1}, we review basic background on Hopf algebras and the lexicographic order.

In Section \ref{sec:2}, we introduce the reduction order on $\X$ (Definition \ref{def:reduction-order}), along with reduction-factorization and prime words (Definition \ref{def:reduction-factorization}).
We first establish that the reduction order defines a well-ordering on $\X$ (Lemma~\ref{lem:reduction-order-well}) and present several examples to illustrate its behavior. We then show that every prime word occupies a distinguished position within this order: specifically, each prime word is the greatest among all of its own factors with respect to the reduction order (Lemma \ref{lem:properties-alpha_R}).

In Section~\ref{sec:3}, we investigate irreducible words with respect to the reduction order. We show that every prefix of an irreducible word is also irreducible, whereas its suffixes need not be (Proposition~\ref{prop:prefix-irrws}, Remark~\ref{rmk:suffix-irrws-false}). We further prove that if an augmented algebra is right Noetherian, then it contains only finitely many irreducible letters (Lemma~\ref{lem:Noetherian-irrls-finite}).

In Section \ref{sec:4}, using the reduction order, reduction-factorization and prime words, we describe the comultiplication of arbitrary words in $\X$ (Lemmas \ref{lem:coproduct-omega-1}--\ref{lem:coproduct-omega-2}).

In Section \ref{sec:5}, building on this comultiplication formula and Lemma \ref{lem:wr-sum-smaller-words}, we prove that every reducible letter in $X$ can be written as a linear combination of products of letters of strictly smaller order (Lemma \ref{lem:rl-sum-smaller-irrls}).

As a consequence, every non-empty word in $\X$ can be expressed as a linear combination of products of irreducible letters---an essential step toward establishing the affineness of Noetherian pointed Hopf algebras.

\noindent
\textbf{Theorem A.} (Theorem \ref{thm:rl-sum-smaller-irrls}) 
\textit{Suppose that $(X = O \cup D^{*}, \prec)$ is a well-ordered set together with a map $t: X \rightarrow \mathbb{N}$ satisfying $t(x^{*}) = t(x)$ for $x \in D$, where $D\subseteq O$ and $D^*$ is the mirror set of $D$.
Let $(\K\X,\epsilon)$ be the augmented free algebra with $\epsilon(X)=0$, and let $I \subseteq \ker \epsilon$ be a proper ideal of $\K\X$. Suppose $\Delta$ is a skew-triangular comultiplication on $\K\X$ such that $\Delta(I) \subseteq I \otimes \K\X + \K\X \otimes I$.
Assume further that for every $z \in X_{0}$, there exists $c_{z} \in \K\X$ satisfying:
\begin{itemize}
\item $c_{z}(1-z), (1-z)c_{z} \in 1 + I\ (c_z$ is the inverse of $1-z$ modulo $I)$,
\item $\ol(m_{c_{z}}) \preceq \ol(z)$ $($order control condition$)$.
\end{itemize}
If $x \in X$ is an $I$-reducible letter, then there exist $I$-irreducible letters $x_{1}, \ldots, x_{n} \in X$ }

\textit{and scalars $k_{x_{1}\cdots x_{n}} \in \K$ such that
\begin{align*}
x \in \sum_{} k_{x_{1}\cdots x_{n}}x_{1} \cdots x_{n} + I, \qquad x_{1},\ldots,x_{n} \prec x.
\end{align*}
Consequently, every non-empty word in $\X$ can be expressed modulo $I$ as a linear combination of products of $I$-irreducible letters.}

In Section~\ref{sec:6}, we apply the results from Section~\ref{sec:5} to the setting of pointed Hopf algebras. We prove that every right or left Noetherian pointed Hopf algebra is affine. Let $H$ be a pointed Hopf algebra over $\K$. We construct a generating set $X$ of $H$. Denote by $\X$ the free monoid on $X$ and by $\K\X$ the corresponding free algebra.  Let $\pi: \K\X \rightarrow H$ be the canonical projection, and write $I=\ker\pi$.  Denote by $X_{I}$ the set of $I$-irreducible letters (irreducible with respect to the reduction order introduced in \S 2--\S 3).
Theorem A  together with Lemmas \ref{lem:Noetherian-irrls-finite} and \ref{lem:relation-x-x*}  yields  the following  result. 

\noindent
\textbf{Theorem B.} (Theorem \ref{thm:noetherian-pointed-affine}) 
\textit{Let $H$ be a pointed Hopf algebra over $\K$. The following hold:
\begin{itemize}
    \item [(a)] 
There exists a generating set $X$ of $H$ such that $\pi(X_{I})$ generates $H$ as an algebra, where $\pi: \K\X \rightarrow H$ is the canonical projection and  $I=\ker\pi$.
\item [(b)] If $H$ is right or left Noetherian, then $H$ is affine.
\end{itemize}
}

In particular, every right or left Noetherian connected Hopf algebra is affine.
Another interesting consequence of Theorem B is that every Hopf subalgebra of a right or left Noetherian pointed Hopf algebra is affine (Corollary \ref{cor:Hopfsubalg-affine}), extending classical results for group algebras and universal enveloping algebras (Remark \ref{rmk:Hopfsubalg-affine}). 

Several ideas in this paper draw inspiration from \cite{ZSL2020,J2023,K1999,R1999,U2004}, including the use of irreducible words under graded lexicographic order, the role of comultiplication in PBW-type arguments, and the use of word length in ordering.

We expect that the methods developed here, such as the reduction order, reduction-factorization, prime words, and the treatment of invertible elements---will be useful in further investigations of the affineness problem and related questions. In fact, these techniques have recently been applied in \cite{JZ2025-2} to show that Noetherian connected braided Hopf algebras are affine.

\section{Coalgebras, Hopf algebras and the lexicographic order}\label{sec:1}
Throughout, let $\K$ be a field, $\K^{*} = \K \setminus \{0\}$, $\Z$ the ring of integers, and $\mathbb{N}$ the set of natural numbers. For a group $G$, denote by $\ord(a)$ the order of an element $a \in G$.  A ring (respectively, a $\K$-algebra) is called \textit{affine} if it is finitely generated as a ring (respectively, as a $\K$-algebra).

\subsection{Coalgebras and Hopf algebras}
\qquad

Following \cite{S1969,Ra2012}, we recall some basic concepts concerning  coalgebras and Hopf algebras. 

Let $C$ be  a coalgebra over $\K$, denote by $\G(C)$  the set of all \textit{group-like elements} of $C$, that is, $\G(C):=\{g\in C\mid \Delta_{C}(g)=g\otimes g, ~\epsilon_{C}(g)=1\}$. For $g,h\in \G(C)$, define  the set of $(g,h)$-\textit{skew primitive elements}  by
$$\Pp_{g,h}(C):=\{x\in C\mid \Delta_{C}(x) = g\otimes x + x\otimes h,~\epsilon_{C}(x)=0\}.$$
In particular, the elements of $\Pp_{1,1}(C)$ are called \textit{primitive}, we set  $\Pp(C):=\bigcup_{g,h\in \G(C)}\Pp_{g,h}(C)$.  The \textit{coradical} of $C$, denoted $\corad(C)$, is the sum of all simple subcoalgebras of $C$. The coalgebra $C$ is said to be  \textit{pointed} (resp. \textit{connected}) if $\corad(C)=\K\G(C)$ (resp. $\corad(C)=\K$). For a Hopf algebra $H$ over $\K$, denote by $S_{H}$ the antipode of $H$.

Recall that the \textit{coradical filtration} $\{C_{(n)}\}_{n=0}^{\infty}$ of a coalgebra $C$, defined recursively by 
$$C_{(0)}=\operatorname{corad}{C}  \text{ and }  C_{(n)}:=C_{(n-1)} \bigwedge C_{(0)}= \Delta_{C}^{-1}(C_{(n-1)} \otimes C + C \otimes C_{(0)}),\ n\ge 1,$$
is a coalgebra filtration of $C$.

For a Hopf algebra $H$, the coradical filtration $\{H_{(n)}\}_{n\ge 0}$ of $H$  is a Hopf algebra filtration of $H$ if and only if the coradical $H_{(0)}$  is a Hopf subalgebra of $H$, see \cite{Ra2012} for the details. Consequently, the coradical filtration of a pointed Hopf algebra is a Hopf algebra filtration.

We next  recall a result concerning the coradical of a pointed coalgebra.
\begin{lemma}\cite{Ra2012}\label{lem:corad-pointed-Hopfalg}
Let $C$ be a pointed coalgebra over $\K$ and fix an element $a\in \G(C)$. Then 
\begin{align*}
C_{(0)} = \corad(C)= \K\G(C) = \K a \bigoplus (\bigoplus_{a\neq g\in \G(C)} \K(a-g)).
\end{align*}
\end{lemma}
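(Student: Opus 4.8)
The equalities $C_{(0)} = \corad(C)$ and $\corad(C) = \K\G(C)$ require no work: the first is the definition of the bottom term of the coradical filtration, and the second is exactly the definition of a pointed coalgebra. So the only substantive claim is the direct sum decomposition $\K\G(C) = \K a \oplus \bigoplus_{a \neq g \in \G(C)} \K(a-g)$, and the plan is to reduce it to the linear independence of the group-like elements, followed by an elementary change of basis.

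The first and only real step is to show that $\G(C)$ is a linearly independent subset of $C$. I would argue by contradiction: suppose $\G(C)$ is linearly dependent and choose a dependent subset $\{g_1, \dots, g_n\}$ of minimal cardinality, so that (after reindexing) $g_1 = \sum_{i=2}^{n} c_i g_i$ with every $c_i \in \K^{*}$ and $\{g_2, \dots, g_n\}$ linearly independent. Applying $\Delta_C$ and using $\Delta_C(g_j) = g_j \otimes g_j$ gives $\sum_{i=2}^{n} c_i\, g_i \otimes g_i = \big(\sum_{i=2}^{n} c_i g_i\big) \otimes \big(\sum_{j=2}^{n} c_j g_j\big)$. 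Since $\{g_2, \dots, g_n\}$ is linearly independent and $\K$ is a field, the family $\{g_i \otimes g_j\}_{2 \le i,j \le n}$ is linearly independent in $C \otimes C$, so comparing coefficients yields $c_i^{2} = c_i$ for each $i$ and $c_i c_j = 0$ for $i \ne j$. The former forces $c_i = 1$, which contradicts the latter as soon as $n \ge 3$; and for $n = 2$ we get $g_1 = c_2 g_2$, whence $\epsilon_C(g_1) = c_2 \epsilon_C(g_2)$ gives $c_2 = 1$ and $g_1 = g_2$, contradicting distinctness. Hence $\G(C)$ is linearly independent, and so it is a $\K$-basis of $\K\G(C)$.

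It then remains to pass from the basis $\G(C)$ of $\K\G(C)$ to the basis $\{a\} \cup \{a - g : g \in \G(C),\, g \ne a\}$. Spanning is immediate since $g = a - (a-g)$ for $g \ne a$; for independence, a relation $\lambda a + \sum_{g \ne a} \mu_g (a - g) = 0$ rewrites as $\big(\lambda + \sum_{g \ne a} \mu_g\big) a - \sum_{g \ne a} \mu_g\, g = 0$, and linear independence of $\G(C)$ forces all $\mu_g = 0$ and then $\lambda = 0$. This yields the claimed internal direct sum.

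I do not anticipate a genuine obstacle here: the statement is classical and is in fact available from \cite{Ra2012}. The only point demanding mild care is the bookkeeping in the linear-independence argument — treating the cases $n = 2$ and $n \ge 3$ separately, and noting that the independence of $\{g_i \otimes g_j\}$ over the field $\K$ follows from the independence of $\{g_2,\dots,g_n\}$.
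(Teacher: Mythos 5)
Your proposal is correct: the paper gives no proof of this lemma, citing it directly from Radford's book, and your argument---reducing everything to the classical fact that $\G(C)$ is linearly independent (via a minimal dependence relation and comparison of coefficients after applying $\Delta_C$ and $\epsilon_C$), followed by the elementary change of basis from $\G(C)$ to $\{a\}\cup\{a-g : a\neq g\in\G(C)\}$---is exactly the standard proof found in that source. The only cosmetic point is that a minimal dependent subset automatically has at least two elements since $\epsilon_C(g)=1$ rules out $g=0$, which your case analysis implicitly uses but does not state.
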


Recall from \cite[Corollary 3.5.15]{Ra2012} that for a pointed coalgebra $C$ over $\K$, there exists an orthonormal family $\{e_{g}\}_{g\in \G(C)}$ of idempotents  of $C^{*}$ satisfying $e_{g}(h)= \delta_{g,h}$ for all $g,h\in \G(C)$. 
For any subspace $U\subseteq C$ and $g,h\in \G(C)$, denote by $$U^{h}:= e_{h} \rightharpoonup  U, \  \ {^{g}U}:= U \leftharpoonup e_{g}, \text{ and } \ {^{g}U^{h}}:= e_{h} \rightharpoonup  U \leftharpoonup e_{g},$$ where the left and right actions of $C^*$ on $C$ are defined as follows: for $u\in U$ and $c^{*} \in C^{*}$,
$$c^{*} \rightharpoonup u = \sum_{} u_{(1)} c^{*}(u_{(2)}), \text{ and }  u \leftharpoonup c^{*} = \sum_{} c^{*}(u_{(1)}) u_{(2)}.$$

Then the following lemma describes the coproduct of a pointed coalgebra.

\begin{lemma}\cite[Proposition 4.3.1]{Ra2012}\label{lem:coproduct-pointed-coalgebra}
Let $C$ be a pointed coalgebra over $\K$. The following statements hold:
\begin{itemize}
    \item [(a)] If $n\ge 1$ and $c\in {^{g}(C_{(n)})^{h}}$, then 
    \begin{align*}
    \Delta_{C}(c) =  g \otimes c + c \otimes h + v, \qquad v\in  \sum_{a\in \G(C)} {^{g}(C_{(n-1)})^{a}} \otimes {^{a}(C_{(n-1)})^{h}};
    \end{align*}
    \item [(b)] If $g\neq h$, then $\mathcal{P}_{g,h}= \K(g-h) \oplus {^{g}(C_{(1)})^{h}}$;
    \item [(c)] ${^{g}(C_{(1)})^{g}} = \mathcal{P}_{g,g}(C) \oplus \K g$;
\end{itemize}
\end{lemma}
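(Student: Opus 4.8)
The plan is to deduce all three parts from two structural facts about the idempotents $\{e_{a}\}_{a\in\G(C)}$: the bigrading they induce on $C$, and its compatibility with $\Delta$. Write $\pi_{h}=e_{h}\rightharpoonup(-)$ and $\lambda_{g}=(-)\leftharpoonup e_{g}$, so that ${}^{g}C^{h}=\pi_{h}\lambda_{g}(C)$ and, by orthonormality of the family, $\pi_{h}\lambda_{g}$ is the projection onto ${}^{g}C^{h}$; since every $c\in C$ lies in a finite-dimensional subcoalgebra, only finitely many $e_{a}$ act nontrivially on it and $\sum_{a}\pi_{a}=\id=\sum_{a}\lambda_{a}$ there. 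A short Sweedler computation using coassociativity gives the transport identities $\Delta\circ\lambda_{g}=(\lambda_{g}\otimes\id)\circ\Delta$ and $\Delta\circ\pi_{h}=(\id\otimes\pi_{h})\circ\Delta$, together with the crucial middle-matching identity $(\pi_{a}\otimes\id)\circ\Delta=(\id\otimes\lambda_{a})\circ\Delta$ (both sides send $c$ to $\sum c_{1}e_{a}(c_{2})\otimes c_{3}$). Combining these with orthogonality $\lambda_{a}\lambda_{b}=\delta_{ab}\lambda_{a}$ kills the off-diagonal middle indices and yields the key compatibility $\Delta({}^{g}C^{h})\subseteq\sum_{a}{}^{g}C^{a}\otimes{}^{a}C^{h}$. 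I expect this middle-matching step to be the technical heart of the argument.

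For (a), I would intersect this compatibility with the coalgebra-filtration property $\Delta(C_{(n)})\subseteq\sum_{i+j=n}C_{(i)}\otimes C_{(j)}$. First one checks that $\pi_{h}$ and $\lambda_{g}$ preserve each $C_{(n)}$ (because $C_{(n)}$ is a subcomodule and $e_{g}\in C^{*}$), so that ${}^{g}C^{a}\cap C_{(n)}={}^{g}(C_{(n)})^{a}$. Using ${}^{g}(C_{(0)})^{a}=\delta_{ag}\K g$ and ${}^{a}(C_{(0)})^{h}=\delta_{ah}\K h$, the extreme terms $i=0$ and $j=0$ contribute $g\otimes p$ and $q\otimes h$ with $p,q\in{}^{g}(C_{(n)})^{h}$, while the remaining terms lie in $\sum_{a}{}^{g}(C_{(n-1)})^{a}\otimes{}^{a}(C_{(n-1)})^{h}$. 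To identify the leading terms as $g\otimes c$ and $c\otimes h$, I would apply the counit axioms $(\epsilon\otimes\id)\Delta=\id=(\id\otimes\epsilon)\Delta$ and project back onto ${}^{g}C^{h}$; this shows $p-c$ and $q-c$ lie in ${}^{g}(C_{(n-1)})^{h}$, and these discrepancies can be absorbed into $v$ since $\K g\otimes{}^{g}(C_{(n-1)})^{h}$ and ${}^{g}(C_{(n-1)})^{h}\otimes\K h$ sit inside the allowed space. This counit bookkeeping is the second delicate point.

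Parts (b) and (c) then follow by specializing (a) to $n=1$. For (c), any $c\in{}^{g}(C_{(1)})^{g}$ has $\Delta c=g\otimes c+c\otimes g+\mu(g\otimes g)$ for a scalar $\mu$; applying $\id\otimes\epsilon$ forces $\epsilon(c)=-\mu$, and then $c':=c+\mu g$ satisfies $\Delta c'=g\otimes c'+c'\otimes g$ and $\epsilon(c')=0$, giving $c\in\Pp_{g,g}(C)\oplus\K g$; the reverse inclusion and directness follow from $\epsilon(g)=1$. For (b) with $g\neq h$, the containment ${}^{g}(C_{(1)})^{h}\subseteq\Pp_{g,h}$ is immediate since the $v$-term vanishes (no common middle index forces $v=0$), and $\K(g-h)\subseteq\Pp_{g,h}$ is a direct check. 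For the reverse containment I would take $x\in\Pp_{g,h}$, note $x\in C_{(1)}$, and compute $\pi_{h}\lambda_{g}x=x+e_{h}(x)g+e_{g}(x)h$ directly from $\Delta x=g\otimes x+x\otimes h$; the idempotent relations give $e_{a}(x)=0$ for $a\notin\{g,h\}$, so completeness $\sum_{a}e_{a}(x)=\epsilon(x)=0$ forces $e_{g}(x)=-e_{h}(x)$, whence $x-\pi_{h}\lambda_{g}x=e_{g}(x)(g-h)\in\K(g-h)$. Directness of both sums follows by comparing bigraded components against the grouplikes $g,h$.
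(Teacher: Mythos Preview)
The paper does not give its own proof of this lemma; it is quoted verbatim from Radford \cite[Proposition 4.3.1]{Ra2012}, so there is no in-paper argument to compare against.

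Your proposal is correct and is essentially the standard route. The three transport identities you isolate---$\Delta\lambda_{g}=(\lambda_{g}\otimes\id)\Delta$, $\Delta\pi_{h}=(\id\otimes\pi_{h})\Delta$, and the middle-matching $(\pi_{a}\otimes\id)\Delta=(\id\otimes\lambda_{a})\Delta$---together with the filtration inclusion $\Delta(C_{(n)})\subseteq\sum_{i+j=n}C_{(i)}\otimes C_{(j)}$ and the identification ${}^{g}(C_{(0)})^{a}=\delta_{ga}\K g$, give part~(a) exactly as you outline; the counit adjustment absorbing $p-c$ and $q-c$ into $v$ is the right way to pin down the leading tensorands. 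Your arguments for (b) and (c) are also sound: in (b) the idempotent relation $e_{a}*e_{a}=e_{a}$ applied to $x\in\Pp_{g,h}$ indeed kills $e_{a}(x)$ for $a\notin\{g,h\}$, and the same relation applied to $p\in\Pp_{g,g}$ gives $2e_{g}(p)=e_{g}(p)$, hence $e_{g}(p)=0$ in every characteristic, which is what makes the reverse inclusion in (c) go through (you might make this step explicit, since you only sketch the $(c)$ reverse inclusion).
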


\subsection{The lexicographic order on words}\quad

Let $X$ be a set. Denote by $\X$  the free monoid generated by $X$, and by $\K\langle X\rangle$ the free associative algebra generated by $X$ over $\K$. The empty word in $\X$ is denoted by $1$.  Let $\X^{+}$ be the free semigroup generated by $X$. Clearly, $\X = \X^{+} \cup \{1\}$, and  $\K\X^{+}$ is the subspace of $\K\X$ spanned by $\X^{+}$.

A word $\beta\in \X$ is called a \textit{factor} of $\alpha\in \X$ if there exist words $\gamma,\eta\in \X$ such that $\alpha=\gamma\beta\eta$. If $\gamma=1$ (resp. $\eta=1$), then $\beta$ is called a \textit{prefix} (resp. \textit{suffix}) of $\alpha$.
If $\beta\neq \alpha$, then  $\beta$ is called \textit{proper} factor of $\alpha$. 

For a word $\alpha= x_{1} \cdots x_{n} \in \X$ with letters $x_{1},\ldots,x_{n}\in X$,  define
\begin{itemize}
\item  $\alpha(y)$ as the number of occurrences of a letter $y\in X$ in $\alpha$, that is, \\
 $\alpha(y):=\# \{i \mid x_{i}=y,~1\le i\le n\}$;
\item  $l(\alpha)$ as the first letter of  $\alpha$, i.e.  $l(\alpha)=x_{1}$; 
\item  $|\alpha|$ as the \textit{length} of  $\alpha$, i.e. $|\alpha|=n$. 
\end{itemize}
 
For each integer $m\ge 1$, write $X^{m}:= \{\alpha \in \X \mid |\alpha| = m\}$.

Given a well-ordered set $(X,\prec)$, the \textit{lexicographic order} $\prec_{\lex}$ on the free monoid $\X$ is defined as follows (see \cite{L1997}). For any words $\alpha,\beta\in \X$, 
\begin{align*}
\alpha\prec_{\lex} \beta \Longleftrightarrow 
\left\{\begin{array}{l}
\alpha \text { is a proper prefix of } \beta, \text { or } \\
\alpha=\gamma x \eta,~\beta=\gamma y \zeta \text { with } x, y \in X, ~x\prec y \text{ and } \gamma, \eta, \zeta \in\langle X\rangle.
\end{array}\right.
\end{align*}

\begin{example}
Let $X=\{x,y\}$ with $x \prec y$. Then, by definition,  we have
$$x \prec_{\lex} x^{2} \prec_{\lex} x^{3} \prec_{\lex}  xy \prec_{\lex} y \prec_{\lex} yx \prec_{\lex} y^{2}.$$
\end{example}

\begin{remark}
The lexicographic order is not a well order on $\X$, even when $X$ is a finite set. 
For example, let $X=\{x,y\}$ with the order $x\prec y$, and define
$$B:=\{x^{n}y \mid n\ge 0\}.$$
Then the subset $B$ of $\X$ has no least element, because $x^{n+1}y\prec_{\lex} x^{n}y$ for $n\ge 0$. Hence, $(\X, \prec_{\lex})$ is not well-ordered.
\end{remark}

\section{The reduction order and the reduction-factorization on words}\label{sec:2}

To establish the main theorems of this paper, we introduce in this section the concepts of \textit{the reduction order}, the associated \textit{reduction-factorization} on words, and prime words.

Let $(O,<_{O})$ be a well-ordered set and let $D$ be a subset of $O$. Define
$$
D^{*}:=  \{x^{*} \mid x \in D\},\ \  X := O \cup D^{*}.
$$
We refer to $O$ as the \textit{original subset} of $X$ and to $D^{*}$ as  the \textit{mirror set} of $D$. In the special case where $D=\emptyset$, we have $X = O$. 

Define a map $\ol: \X^{+} \rightarrow O$ by setting, for any $\omega\in \X$,
\begin{align*}
\ol(x\omega) &= x, \text{ if } x \in O;\\
\ol(x^{*}\omega) &= x, \text{ if } x^{*} \in D^{*} \text{ with } x\in D.
\end{align*} 
Clearly, $\ol(x) = x$ for $x \in O$, and  $\ol(x^{*}) = x$ for $x^{*} \in D^{*} \text{ with } x\in D$. For $x\in X$, we call $\ol(x)$ the \textit{original element} of $x$.
Hence, for a non-empty word $\alpha\in \X$, the element  $o(\alpha)$ is defined to be  the original element of the first letter $l(\alpha)$. When $l(\alpha) \in O$, we have $\ol(\alpha) = l(\alpha)$; when $l(\alpha) \in D^{*}$, $\ol(\alpha) \neq l(\alpha)$.

Now we define an order $\prec$ on  $X$ as follow. For $x,y \in X$,
\begin{align*}
x \prec y 
\Longleftrightarrow 
\left\{\begin{array}{l}
 \ol(x) <_{O}   \ol(y), \text{ or } 
 \\
\ol(x) = \ol(y) = x \text{ and } y=x^{*}.
\end{array}\right.
\end{align*}
It is straightforward to verify that $(X, \prec)$ is a well-ordered set. Moreover, the order $\prec$ is compatible with the order $<_{O}$ on $O$; that is, $x \prec y$ if and only if $x <_{O} y$ for all $x, y \in O$.
By definition, we clearly have $\ol(\alpha) \preceq l(\alpha)$ for any $\alpha \in \X$.

Next,  we  introduce an order on the set of words $\X$, called the reduction order on $\X$:
\begin{definition}\label{def:reduction-order}
The \textit{reduction order} $\prec_{\R}$ on $\X$ is defined as follows. For any non-empty words $\alpha,\beta\in \X$, $1 \prec_{\R} \alpha$, and 
\begin{align*}
\alpha \prec_{\R} \beta \Longleftrightarrow 
& \left\{\begin{array}{l}
\ol(\alpha)\prec \ol(\beta), \text { or } \\
\ol(\alpha )= \ol(\beta) \text{ and } 
\left\{\begin{array}{l}
|\alpha| < |\beta|, \text{ or } \\
|\alpha| = |\beta| \text{ and } \alpha \prec_{\lex} \beta.
\end{array}\right.
\end{array}\right.
\end{align*}
\end{definition}

It is easy to verify that the reduction order is a total order on $\X$. Moreover, for any non-empty word $\alpha\in \X$, every proper prefix $\beta$ of $\alpha$ satisfies $\beta \prec_{\R} \alpha$. 
Hence, each word $\alpha$ is the greatest among all its prefixes.

\begin{example}\label{eg1:reduction-order}
Let $X=\{x,y\}$ with $D=\emptyset$ and $x \prec y$. Observe that $o(xy) = x = o(x^{3})$ and $|xy| = 2 < 3 = |x^{3}|$. By definition, we have $xy \prec_{\R} x^{3}$, even though $x^{3} \prec_{\lex} xy$. 
\end{example}

\begin{example}\label{eg:x-x*}
Let $X=\{x,x^{*}\}$ with $D=\{x\}$, the mirror set $D^{*} = \{x^{*}\}$ and the order  $x \prec x^{*}$. Note that 
\begin{align*}
& \ol(x^{*}x) = \ol(xx^{*}) = \ol(x^{*}) = \ol(x) = x, \quad |x| = |x^{*}|  < |xx^{*}| = |x^{*}x|, \quad xx^{*} \prec_{\lex} x^{*}x.
\end{align*}
Hence, by definition  
$$x \prec_{\R} x^{*} \prec_{\R} xx^{*} \prec_{\R} x^{*}x.$$
However, with respect to the lexicographic order, we have
$$x \prec_{\lex} xx^{*} \prec_{\lex} x^{*} \prec_{\lex} x^{*}x.$$
\end{example}

Now we show that the reduction order $\prec_{\R}$ is a well order on the set $\X$.
\begin{lemma}\label{lem:reduction-order-well}
The set $(\X, \prec_{\R})$ is well-ordered;  that is, every non-empty subset of $\X$ has a least element with respect to $\prec_{\R}$.
\end{lemma}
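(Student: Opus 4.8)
The plan is to reduce the well-ordering of $(\X,\prec_{\R})$ to the well-ordering of $(X,\prec)$ (already established) via a lexicographic/product construction, using the fact that the reduction order is built in three stages from well-ordered "coordinates." Given a non-empty subset $S\subseteq\X$, I would construct its least element layer by layer. First, since $\ol(S):=\{\ol(\alpha)\mid\alpha\in S\}$ is a non-empty subset of the well-ordered set $(X,\prec)$ — here one should note $\ol$ lands in $O\subseteq X$ and $\prec$ restricts to $<_O$ on $O$, which is well-ordered by hypothesis — it has a least element $x_0$. Restrict to $S_1:=\{\alpha\in S\mid\ol(\alpha)=x_0\}$, which is non-empty. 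Second, the set of lengths $\{|\alpha|\mid\alpha\in S_1\}$ is a non-empty subset of $\mathbb N$, hence has a least element $m_0$; restrict to $S_2:=\{\alpha\in S_1\mid|\alpha|=m_0\}$. Third, and this is where care is needed, I claim $S_2$ has a least element under $\prec_{\lex}$, and that element is the $\prec_{\R}$-least element of $S$; the second assertion is immediate from Definition \ref{def:reduction-order} once the first is proved, because any $\beta\in S$ with $\beta\notin S_2$ has either strictly larger $\ol$-value or equal $\ol$-value but strictly larger length, so is strictly $\prec_{\R}$-above every element of $S_2$, while within $S_2$ the orders $\prec_{\R}$ and $\prec_{\lex}$ coincide.

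So the crux is: \emph{every non-empty set of words of a fixed length $m$ over the well-ordered alphabet $(X,\prec)$ has a $\prec_{\lex}$-least element.} Here the restriction to fixed length is essential — the Remark after the lexicographic order definition shows $\prec_{\lex}$ is not a well order on all of $\X$, the obstruction being infinite descending chains like $x^{n+1}y\prec_{\lex}x^ny$ that mix lengths. On $X^m$, however, $\prec_{\lex}$ restricted to words of length exactly $m$ agrees with the $m$-fold lexicographic product order on $X^m$ (no word of length $m$ is a proper prefix of another), and a finite lexicographic product of well-orders is a well-order. I would prove this by induction on $m$: for $m=1$ it is the hypothesis on $(X,\prec)$; for the inductive step, given non-empty $T\subseteq X^m$, let $x_1$ be the $\prec$-least first letter occurring among words in $T$, pass to $T':=\{\alpha\in T\mid l(\alpha)=x_1\}$, strip the first letter to get a non-empty subset of $X^{m-1}$, apply the inductive hypothesis to obtain a $\prec_{\lex}$-least tail, and prepend $x_1$. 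One checks directly from the definition of $\prec_{\lex}$ that the resulting word is $\prec_{\lex}$-least in $T$.

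The main obstacle — really the only subtle point — is keeping straight that the "proper prefix" clause in the definition of $\prec_{\lex}$ is harmless precisely because we have already fixed the length, and dually that the length-stratification in $\prec_{\R}$ is exactly what repairs the failure of $\prec_{\lex}$ to be a well order. Everything else is the standard fact that a lexicographic combination of well-ordered coordinates (with the caveat that only finitely many coordinates are active, guaranteed here by the fixed length $m_0$) is again well-ordered, together with the observation that $\ol$-value, then length, then lex-order is such a combination. I would present the argument in the order: (i) $S\mapsto x_0\mapsto S_1$; (ii) $S_1\mapsto m_0\mapsto S_2$; (iii) the fixed-length lexicographic lemma by induction; (iv) verify the output is $\prec_{\R}$-minimal in $S$ using Definition \ref{def:reduction-order}.
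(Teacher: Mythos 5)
Your proposal is correct and follows essentially the same route as the paper's proof: stratify first by the least original element $\ol(\alpha)$, then by minimal length, and finally take the $\prec_{\lex}$-least word among the fixed-length survivors, checking via Definition \ref{def:reduction-order} that this word is $\prec_{\R}$-least. The only difference is that the paper simply asserts that $(X^{m},\prec_{\lex})$ is well-ordered, whereas you supply the short induction on $m$ proving it, which is a harmless (indeed welcome) extra detail rather than a different approach.
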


\begin{proof}
It suffices to show that every non-empty subset $G\subseteq \X$ has a least element with respect to $\prec_{\R}$. Let
\begin{align*}
L:=\{o(\omega) \mid \omega\in G\}.
\end{align*}
Since $L\subseteq X$ and $X$ is well-ordered, $L$ has a least element, say $z$. Define
\begin{align*}
L_{1}:=\{\omega\in G \mid o(\omega)=z\}.
\end{align*}
Among the words in $L_1$, choose those with minimal length $m$, and set 
\begin{align*}
L_{2}:=\{\omega\in G \mid o(\omega)=z, ~|\omega|=m\}.
\end{align*} 
Since $(X^{m},\prec_{\lex})$ is well-ordered and $L_{2} \subseteq X^{m}$, 
the set  $L_{2}$ has a least element with respect to the lexicographic order, say
$$\alpha = z\beta\ \text{or}\ z^{*}\beta,$$
where $\beta$ is some word (the second case occurs if $z\beta \notin G$). By the definition of the reduction order  $\prec_{\R}$, it follows that $\alpha$ is the least element in $G$. Hence, $(\X, \prec_{\R})$ is well-ordered.
\end{proof}

It is straightforward to verify that the two orders $\prec_{\R}$ and $\prec$ are compatible.
\begin{lemma}\label{lem:prec_R-compatible-prec}
The reduction order $\prec_{\R}$ is compatible with the order $\prec$  on the set $X$, that is, for any letters $x,y\in X$,  we have $x \prec_{\R} y$ if and only if $x \prec y$.
\end{lemma}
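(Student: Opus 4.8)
The plan is to simply unwind both definitions on words of length one. Fix letters $x,y \in X$, regarded as words of length $1$ in $\X$. Since $|x| = |y| = 1$, in Definition \ref{def:reduction-order} the branch ``$|\alpha| < |\beta|$'' can never be triggered, so $x \prec_{\R} y$ holds if and only if either $\ol(x) \prec \ol(y)$, or else $\ol(x) = \ol(y)$ and $x \prec_{\lex} y$.

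Next I would record that on letters the lexicographic order coincides with $\prec$ on $X$. If $x \prec_{\lex} y$, then neither can be a proper prefix of the other since they have equal length, so the second clause in the definition of $\prec_{\lex}$ applies and the length constraint forces $\gamma = \eta = \zeta = 1$, whence $x \prec y$ in $X$; conversely $x \prec y$ yields $x \prec_{\lex} y$ by taking $\gamma = \eta = \zeta = 1$. Combining this with the previous paragraph, $x \prec_{\R} y$ if and only if $\ol(x) \prec \ol(y)$, or $\ol(x) = \ol(y)$ and $x \prec y$ in $X$.

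Finally I would compare this characterization with the definition of $\prec$ on $X$. Since $\ol(x), \ol(y) \in O$ and $\prec$ restricts to $<_{O}$ on $O$ (as already observed in the text), the condition $\ol(x) \prec \ol(y)$ is the same as $\ol(x) <_{O} \ol(y)$, which is precisely the first clause defining $x \prec y$; and when $\ol(x) = \ol(y)$, the only way to have $x \prec y$ is that $\ol(x) = \ol(y) = x$ and $y = x^{*}$, which is exactly the residual case ``$\ol(x) = \ol(y)$ and $x \prec y$''. Hence the two characterizations agree, establishing both implications simultaneously. The argument is entirely routine; the only point demanding care is keeping track, in each of the two nested definitions, of which branch remains active once the constraint $|x| = |y| = 1$ is imposed.
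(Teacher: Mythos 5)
Your proof is correct: the case analysis correctly eliminates the length clause for letters, identifies $\prec_{\lex}$ with $\prec$ on length-one words, and matches the two remaining branches of Definition \ref{def:reduction-order} with the two clauses defining $\prec$ on $X$ (using that $\prec$ restricts to $<_{O}$ on $O$). The paper omits this verification as straightforward, and your argument is precisely the routine unwinding it has in mind.
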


Applying Lemma \ref{lem:prec_R-compatible-prec} to words, we obtain the following result:
\begin{corollary}\label{cor:prec_R-compatible-prec-2}
Let $\alpha$ be a non-empty word in $\X$,  and let $y$ be a letter in $O$. The following statements are equivalent: 
\begin{enumerate}
\item[(a)]$ \alpha \prec_{\R} y$;
\item[(b)]  $l(\alpha) \prec y$;
\item[(c)] $\ol(\alpha) \prec y$.
\end{enumerate}
\end{corollary}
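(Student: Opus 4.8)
The plan is to establish the cycle of equivalences by unwinding the definitions, exploiting repeatedly that a single letter $y\in O$ satisfies $\ol(y)=y$ and $|y|=1$, together with the general inequality $\ol(\alpha)\preceq l(\alpha)$ recorded after the definition of $\ol$ and the fact that $\ol$ depends only on the first letter, so that $\ol(\alpha)=\ol(l(\alpha))$. Throughout I would also use the stated compatibility of $\prec$ with $<_{O}$ on $O$, and the description of $\prec$ on $X$ by its two clauses.

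First I would prove (b) $\Leftrightarrow$ (c). For the forward direction, if $l(\alpha)\prec y$ then the definition of $\prec$ on $X$ gives either $\ol(l(\alpha))<_{O}\ol(y)$, or $\ol(l(\alpha))=\ol(y)=l(\alpha)$ with $y=l(\alpha)^{*}$; in the first case $\ol(\alpha)=\ol(l(\alpha))<_{O}y$, and in the (degenerate) second case $\ol(\alpha)=l(\alpha)\prec l(\alpha)^{*}=y$, so in either case $\ol(\alpha)\prec y$. Conversely, if $\ol(\alpha)\prec y$ then $\ol(l(\alpha))=\ol(\alpha)<_{O}y=\ol(y)$, so the first clause of the definition of $\prec$ yields $l(\alpha)\prec y$.

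Next I would prove (c) $\Rightarrow$ (a): if $\ol(\alpha)\prec y=\ol(y)$, then the first clause of Definition~\ref{def:reduction-order} immediately gives $\alpha\prec_{\R} y$, with no condition on length or lexicographic data required. This direction is essentially immediate.

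The remaining implication (a) $\Rightarrow$ (c) is the only one that needs a genuine case analysis, and I expect it to be the main (if modest) obstacle. Assume $\alpha\prec_{\R} y$. Since $\ol(y)=y$, Definition~\ref{def:reduction-order} forces either $\ol(\alpha)\prec y$, which is (c), or else $\ol(\alpha)=y$ together with $|\alpha|<|y|$, or with $|\alpha|=|y|$ and $\alpha\prec_{\lex}y$. The subcase $|\alpha|<|y|$ is vacuous, since $|y|=1$ and $\alpha$ is non-empty. In the subcase $|\alpha|=|y|=1$, $\alpha$ is a single letter and $\alpha\prec_{\lex}y$ reduces to $\alpha\prec y$ in $X$; but $\ol(\alpha)=y$ then forces $\alpha=y^{*}$ with $y\in D$ (if $\alpha\in O$ we would get $\alpha=\ol(\alpha)=y$, contradicting $\alpha\prec y$), and then $y\prec y^{*}=\alpha$, again contradicting $\alpha\prec y$. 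Hence $\ol(\alpha)\prec y$, which gives (c) and completes the chain (a) $\Rightarrow$ (c) $\Rightarrow$ (a); combined with (b) $\Leftrightarrow$ (c) this yields the full equivalence. The whole argument is bookkeeping with the definitions; the only delicate point is ruling out the tie-breaking subcases above, which is where the hypotheses $y\in O$ and $\alpha\neq 1$ are actually used.
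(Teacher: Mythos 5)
Your proof is correct, and it takes a slightly different route from the paper's. The paper establishes the cycle (a) $\Rightarrow$ (b) $\Rightarrow$ (c) $\Rightarrow$ (a): it gets (a) $\Rightarrow$ (b) from the observation that every word is the greatest among its prefixes (so $l(\alpha) \preceq_{\R} \alpha \prec_{\R} y$) combined with Lemma \ref{lem:prec_R-compatible-prec}, and (b) $\Rightarrow$ (c) from $\ol(\alpha) \preceq l(\alpha)$; the implication (c) $\Rightarrow$ (a) is the same in both arguments. You instead prove (b) $\Leftrightarrow$ (c) and (a) $\Leftrightarrow$ (c) directly by unwinding the definitions of $\prec$ on $X$ and of $\prec_{\R}$, with the case analysis in (a) $\Rightarrow$ (c) using $|y|=1$, $y\in O$, and the clause $x \prec x^{*}$ to rule out the length and lexicographic tie-breaking subcases. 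Your version avoids invoking the prefix-maximality remark and Lemma \ref{lem:prec_R-compatible-prec} at the cost of a somewhat longer hands-on verification; the paper's cycle is shorter granted those auxiliary facts. One minor remark: in your forward direction of (b) $\Rightarrow$ (c), the subcase $y = l(\alpha)^{*}$ is in fact vacuous, since $y \in O$ while mirror letters lie in $D^{*}$, which is disjoint from $O$; your treatment of it is harmless and reaches the correct conclusion anyway.
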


\begin{proof}
(a) $\Rightarrow$ (b).
Since $\alpha$ is the greatest among its prefixes, we have $l(\alpha) \preceq_{\R} \alpha \prec_{\R} y$.
By Lemma \ref{lem:prec_R-compatible-prec}, this implies $l(\alpha) \prec y$.

(b) $\Rightarrow$ (c).
Because $\ol(\alpha) \preceq l(\alpha)$ and $l(\alpha) \prec y$, it follows that $\ol(\alpha) \prec y$.

(c) $\Rightarrow$ (a).
As $y \in O$, we have $\ol(y) = y$.
Hence $\ol(\alpha) \prec \ol(y)$, which, by the definition of $\prec_{\R}$, yields $\alpha \prec_{\R} y$.
\end{proof}

We now state a useful property of the reduction order on words.

\begin{lemma}\label{lem:ac<bc}
Let $\alpha, \beta$ be non-empty words in $\X$ with $\alpha \prec_{\R} \beta$. Then for any word $\gamma \in \X$, one has
$\alpha\gamma \prec_{\R} \beta\gamma$.
\end{lemma}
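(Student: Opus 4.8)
The plan is to unwind Definition \ref{def:reduction-order} and split into the three situations that can cause $\alpha \prec_{\R} \beta$, relying on two elementary observations: appending $\gamma$ on the right leaves the first letter of a non-empty word unchanged, so $\ol(\alpha\gamma) = \ol(\alpha)$ and $\ol(\beta\gamma) = \ol(\beta)$; and it adds $|\gamma|$ to the length, so $|\alpha\gamma| = |\alpha| + |\gamma|$ and $|\beta\gamma| = |\beta| + |\gamma|$.

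First I would handle the case $\ol(\alpha) \prec \ol(\beta)$: then $\ol(\alpha\gamma) = \ol(\alpha) \prec \ol(\beta) = \ol(\beta\gamma)$, and the first clause of Definition \ref{def:reduction-order} gives $\alpha\gamma \prec_{\R} \beta\gamma$ at once. Next, if $\ol(\alpha) = \ol(\beta)$ and $|\alpha| < |\beta|$, then $\ol(\alpha\gamma) = \ol(\beta\gamma)$ while $|\alpha\gamma| = |\alpha| + |\gamma| < |\beta| + |\gamma| = |\beta\gamma|$, so again $\alpha\gamma \prec_{\R} \beta\gamma$.

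The remaining case is $\ol(\alpha) = \ol(\beta)$, $|\alpha| = |\beta|$, and $\alpha \prec_{\lex} \beta$; here $\ol(\alpha\gamma) = \ol(\beta\gamma)$ and $|\alpha\gamma| = |\beta\gamma|$, so it suffices to prove $\alpha\gamma \prec_{\lex} \beta\gamma$. This is the only step requiring a genuine (if short) argument about the lexicographic order: since $|\alpha| = |\beta|$, the word $\alpha$ cannot be a proper prefix of $\beta$, so the definition of $\prec_{\lex}$ forces a factorization $\alpha = \delta x \eta$, $\beta = \delta y \zeta$ with $\delta,\eta,\zeta \in \X$ and $x,y \in X$, $x \prec y$. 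Right-multiplying by $\gamma$ yields $\alpha\gamma = \delta\, x\,(\eta\gamma)$ and $\beta\gamma = \delta\, y\,(\zeta\gamma)$, which is precisely a witnessing factorization for $\alpha\gamma \prec_{\lex} \beta\gamma$, with the same common prefix $\delta$ and the same letter comparison $x \prec y$.

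I do not expect a real obstacle; the one point that deserves care is noting that $|\alpha| = |\beta|$ excludes the proper-prefix alternative in the definition of $\prec_{\lex}$, so that the witnessing factorization exists and survives right multiplication by $\gamma$. It is worth remarking that the analogous statement for left multiplication fails in general: appending $\gamma$ on the left destroys the first-letter (hence the $\ol$-)comparison, and a length comparison can then run in the opposite direction, which is why the lemma is stated only for multiplication on the right.
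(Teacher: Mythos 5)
Your proposal is correct and follows essentially the same three-case analysis as the paper's proof, which likewise reduces each case to the corresponding clause of Definition \ref{def:reduction-order}; your only addition is spelling out why the lexicographic comparison survives right multiplication (the equal-length hypothesis excluding the proper-prefix alternative), a detail the paper leaves implicit.
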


\begin{proof}
The proof follows directly from the definition of the reduction order. We distinguish three cases.

\noindent
\textit{Case 1}: $\ol(\alpha) \prec \ol(\beta)$. Then we obtain $\ol(\alpha\gamma) \prec \ol(\beta\gamma)$ and $\alpha\gamma \prec_{\R} \beta\gamma$.\\
\textit{Case 2}: $\ol(\alpha) = \ol(\beta)$ and $|\alpha|<|\beta|$. Then $\ol(\alpha\gamma) = \ol(\beta\gamma)$ and $|\alpha\gamma|<|\beta\gamma|$. So $\alpha\gamma \prec_{\R} \beta\gamma$.\\
\textit{Case 3}: $\ol(\alpha) = \ol(\beta)$, $|\alpha|=|\beta|$ and $\alpha \prec_{\lex} \beta$. Then we have $\ol(\alpha\gamma) = \ol(\beta\gamma)$, $|\alpha\gamma|=|\beta\gamma|$,  and $\alpha\gamma \prec_{\lex} \beta\gamma$. Therefore,  $\alpha\gamma \prec_{\R} \beta\gamma$.
\medskip
In all cases, the  desired conclusion holds. 
\end{proof}

\begin{remark}
The reduction order is not left-compatible in general: it need not hold that   $\gamma \alpha \prec_{\R} \gamma\beta$ whenever $\alpha \prec_{\R} \beta$  for any word $\gamma\in \X$. 
For example, let $X=\{x,y,z\}$ with $x\prec y \prec z$.  We have $x^{2} \prec_{\R} y$,  but $zx^{2} \succ_{\R} zy$.
\end{remark}

By definition and Lemma \ref{lem:prec_R-compatible-prec}, we have the following result for letters of $X$.
\begin{lemma}\label{lem:x-least}
Let $x\in O$ and set $L_{x} := \{\alpha \in \X \mid  \ol(\alpha) = x\}$. Then 
\begin{itemize}
\item [(a)] For distinct $y, z \in O$, we have $L_{y} \cap L_{z} = \emptyset$ and $\X^{+} =  \bigcup_{x\in O} L_{x}$.
\item [(b)] The letter $x$ is the least element of the set $L_{x}$ w.r.t. $\prec_{\R}$. 
\item [(c)] If $x\in D$, then $x^{*} \in D^{*}$ is the second least element of $L_{x}$  w.r.t. $\prec_{\R}$.
\item [(d)] A letter $a \in X$ is the least non-empty word of $\X$ (w.r.t. $\prec_{\R}$) if and only if $a$ is the least letter of $X$ w.r.t. $\prec$. Equivalently,  $a$ is the least letter of $O$ w.r.t. $\prec$ (or $<_{O}$).
\end{itemize}
\end{lemma}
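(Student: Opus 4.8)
The plan is to treat the four parts in the order stated, each reducing directly to the definition of $\ol$, the definition of the order $\prec$ on $X$, the definition of $\prec_{\R}$ (Definition \ref{def:reduction-order}), and Lemma \ref{lem:prec_R-compatible-prec}. For part (a), I would first note that $\ol\colon \X^{+}\to O$ is a genuine (single-valued) function: every non-empty word $\alpha$ has a well-defined first letter $l(\alpha)\in X = O\cup D^{*}$, and $\ol(\alpha)$ is then determined without ambiguity (it is $l(\alpha)$ when $l(\alpha)\in O$, and the unique $x\in D$ with $l(\alpha)=x^{*}$ otherwise). Disjointness $L_{y}\cap L_{z}=\emptyset$ for distinct $y,z\in O$ is then immediate, since a word cannot have two distinct original elements, and $\X^{+}=\bigcup_{x\in O}L_{x}$ holds because every $\alpha\in\X^{+}$ lies in $L_{\ol(\alpha)}$.

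For part (b), let $\alpha\in L_{x}$ with $\alpha\neq x$; I want $x\prec_{\R}\alpha$. Since $\ol(\alpha)=x=\ol(x)$, the definition of $\prec_{\R}$ reduces the comparison to lengths, and then to $\prec_{\lex}$ if the lengths agree. If $|\alpha|>1=|x|$ we get $x\prec_{\R}\alpha$ at once; if $|\alpha|=1$ then $\alpha$ is a letter with $\ol(\alpha)=x$, and the only such letters are $x$ and (when $x\in D$) $x^{*}$, so $\alpha=x^{*}$, whence $x\prec x^{*}$ gives $x\prec_{\lex}x^{*}$ and again $x\prec_{\R}\alpha$. Part (c) is the same computation once more: having established that $x$ is the least element, take $\alpha\in L_{x}$ with $\alpha\notin\{x,x^{*}\}$; the case analysis above forces $|\alpha|>1=|x^{*}|$, hence $x^{*}\prec_{\R}\alpha$, so $x^{*}$ is the second least element of $L_{x}$.

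For part (d), the key reduction is that the $\prec_{\R}$-least non-empty word is always a letter of $O$. Indeed, for any $\alpha\in\X^{+}$ the letter $\ol(\alpha)\in O$ lies in $L_{\ol(\alpha)}$, so by part (b) we have $\ol(\alpha)\preceq_{\R}\alpha$; therefore the $\prec_{\R}$-least element of $\X^{+}$ coincides with the $\prec_{\R}$-least letter of $O$. By Lemma \ref{lem:prec_R-compatible-prec} this equals the $\prec$-least letter of $O$, which by the stated compatibility of $\prec$ with $<_{O}$ equals the $<_{O}$-least element of $O$. Finally this coincides with the $\prec$-least letter of all of $X$, because for every $x\in D$ one has $x\prec x^{*}$, so no mirror letter is $\prec$-minimal and the minimum of $X$ is attained inside $O$. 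Assembling these identifications gives every equivalence in (d). The only point requiring real care is the bookkeeping in part (d)---keeping track that ``least word of $\X$'', ``least letter of $O$'', and ``least letter of $X$'' all agree---but once parts (a)--(b) and Lemma \ref{lem:prec_R-compatible-prec} are in hand no genuine obstacle remains.
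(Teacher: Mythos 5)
Your proof is correct, and it follows the same route the paper intends: the paper states this lemma without a written proof, as a direct consequence of the definitions of $\ol$, $\prec$, and $\prec_{\R}$ together with Lemma \ref{lem:prec_R-compatible-prec}, which is exactly the verification you carry out. Your careful case analysis (length comparison, the only length-one words in $L_x$ being $x$ and $x^{*}$, and the reduction of part (d) to the least letter of $O$ via $\ol(\alpha)\preceq_{\R}\alpha$) fills in precisely the routine details the authors omit.
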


Next we define the factorization of a word w.r.t. the reduction order:

\begin{definition}\label{def:reduction-factorization}
Let $\alpha$ be a  non-empty word  in $\X$.   The \textit{reduction-factorization} (or \textit{r-factorization} for short) of $\alpha$ is the decomposition
$$\alpha=\alpha_{L}\alpha_{R}$$
 such that $\alpha_{R}$ is the greatest suffix of $\alpha$ w.r.t. the reduction order. We write this factorization of $\alpha$ as 
\begin{align*}
\rf(\alpha)=(\alpha_{L},\alpha_{R}).
\end{align*}
A non-empty word $\alpha$ is called a \textit{prime word} if $\alpha_{R} = \alpha$ or $\alpha_{L}= 1$.  We denote by $\X_{p}$ the set of all prime words in $\X$.
\end{definition}

It is clear that the reduction-factorization of a word is unique. Moreover, every letter $x\in X$ is a prime word because $x_{R}=x$ and $x_{L} = 1$.

\begin{example}\label{eg1:xR}
Let $X=\{x,y,z\}$ with  $x \prec y \prec z$. 
By  definition we have 
$$(x^{3})_{R}=x^{3}, ~(xy)_{R}=y,\  \text{and}\ (yx)_{R}=yx.$$
Hence, the r-factorization of a word is closely related to the greatest letter occurring in it.  Moreover, note that  $xxy \succ_{\R} xz$,  while 
$$(xxy)_{R}=y\prec_{\R} z =(xz)_{R}.$$ 
This shows that a word of higher reduction order may have a suffix of lower reduction order, and conversely, a smaller word can have a larger suffix. 
\end{example}

\begin{example}\label{eg2:xx*R}
Let $X=\{x,x^{*}\}$, $D=\{x\}$, and $D^{*} = \{x^{*}\}$ with $x \prec x^{*}$. 
Recall from Example \ref{eg:x-x*} that 
$$x \prec_{\R} x^{*} \prec_{\R} xx^{*} \prec_{\R} x^{*}x.$$
 Then
\begin{align*}
(xx^{*})_{R} = xx^{*}, \quad (x^{*}x)_{R} = x^{*}x.
\end{align*} 
This situation differs from Example \ref{eg1:xR} since here $x$ is not the greatest letter of the word $xx^{*}$. 
\end{example}

For a word $\alpha \in \X$, denote by $m_{\alpha}$ the greatest letter appearing in $\alpha$.  For a polynomial $p=\sum_{i}k_{i}\alpha_{i} \in \K\X$ with $\alpha_{i}\in \X$, denote by $m_{p}$ the greatest letter of all the $\alpha_{i}$'s. 

By Examples \ref{eg1:xR}--\ref{eg2:xx*R}, we see that the r-factorization of a word is closely tied to the original position and type of its greatest letter. In general, we have the following results describing the behavior of r-factorizations and prime words.

\begin{lemma}\label{lem:properties-alpha_R}
Let $\alpha$ be a non-empty word in $\X$, and let $\rf(\alpha)=(\alpha_{L},\alpha_{R})$ be its reduction-factorization. Set $m:=\ol(m_{\alpha})$. Then the following statements hold:
\begin{itemize}
    \item [(a)] $\alpha_{L}(m)=0$, and $\alpha_{L}(m^{*})=0$ if $m\in D$; moreover, $o(\alpha_{R}) = o(m_{\alpha_{R}}) = m$. 
    \item [(b)] $o(\alpha)= m$ if and only if $\alpha_{R}=\alpha$.
    \item [(c)] A factorization $\alpha=\alpha_{1}\alpha_{2}$ is the r-factorization of $\alpha$ if and only if $\alpha_{1}(m)=0$, $\alpha_{1}(m^{*})=0$ when $m\in D$, and $o(\alpha_{2})=m$.
    \item [(d)] The suffix $\alpha_{R}$ is the greatest factor of itself w.r.t. the reduction order. Consequently,  $\alpha_{R}$ is the greatest factor of $\alpha$ w.r.t. the reduction order.
\end{itemize}
\end{lemma}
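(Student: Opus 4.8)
The plan is to analyze the reduction-factorization $\rf(\alpha)=(\alpha_L,\alpha_R)$ by tracking the position of the greatest letter $m_\alpha$ and its original element $m=\ol(m_\alpha)$, using the three-tier structure of $\prec_{\R}$ (first compare $\ol$, then length, then lexicographic order) together with Lemma~\ref{lem:ac<bc} (right-compatibility) and Lemma~\ref{lem:x-least} (the letters $x$ and, if $x\in D$, $x^*$ are the two least elements of $L_x$).

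For part (a), I would argue that the greatest suffix $\alpha_R$ must have $\ol(\alpha_R)=m$: among all suffixes of $\alpha$, those beginning with a letter $z$ satisfying $\ol(z)=m$ exist (take the suffix starting at the first occurrence of $m_\alpha$), and by the definition of $\prec_{\R}$ any suffix whose first letter has a strictly smaller original element is strictly smaller in the reduction order. So $\ol(\alpha_R)=m$, which forces $\ol(l(\alpha_L\alpha_R))\le\ldots$; more precisely, if $\alpha_L$ contained an occurrence of $m$ or of $m^*$ (when $m\in D$), then the suffix of $\alpha$ starting at that occurrence would still be a suffix with original element $m$, but it would be strictly longer than $\alpha_R$, contradicting maximality of $\alpha_R$ in length (second tier of $\prec_{\R}$), unless that occurrence is exactly at the start of $\alpha_R$ — hence $\alpha_L(m)=\alpha_L(m^*)=0$. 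The claim $\ol(\alpha_R)=\ol(m_{\alpha_R})=m$ then follows since $m_{\alpha_R}$, being a letter of $\alpha_R\subseteq\alpha$, has $\ol(m_{\alpha_R})\le m$, while $l(\alpha_R)$ already achieves original element $m$, so $m_{\alpha_R}$ has original element exactly $m$ and $\ol(\alpha_R)=m$.

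For part (b): if $\ol(\alpha)=m$ then $\alpha$ itself is a suffix of $\alpha$ with original element $m$ and maximal length, and among suffixes of that length $\alpha$ is trivially the lex-greatest (there is only one), so $\alpha_R=\alpha$; conversely if $\alpha_R=\alpha$ then $\ol(\alpha)=\ol(\alpha_R)=m$ by part (a). Part (c) is then essentially a restatement: the forward direction is part (a) plus part (b) applied to $\alpha_2$ (note $o(\alpha_2)=m$ and $m_{\alpha_2}$ has original element $m$ by (a), so $\ol(\alpha_2)=\ol(m_{\alpha_2})$ forces $(\alpha_2)_R=\alpha_2$); for the converse, given a factorization with $\alpha_1(m)=\alpha_1(m^*)=0$ and $o(\alpha_2)=m$, I must show $\alpha_2$ is the \emph{greatest} suffix — any longer suffix would have to reach into $\alpha_1$, hence would begin with a letter of $\alpha_1$ whose original element is $\le m$ but, since that letter is neither $m$ nor $m^*$, is actually $<m$ or is $m$-with-a-smaller-type; comparing via the first two tiers of $\prec_{\R}$ (smaller $\ol$, or equal $\ol$ but $\alpha_2$ is a suffix so its tail relationship is handled by prefix/length considerations) shows such a suffix is $\prec_{\R}\alpha_2$, and any shorter suffix is a proper prefix-complement situation handled likewise; uniqueness of r-factorization then gives the result.

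For part (d): $\alpha_R$ is a factor of itself trivially; to see it is the \emph{greatest} factor, let $\delta$ be any factor of $\alpha_R$, so $\delta$ is a factor of $\alpha$, and in particular a factor of $\alpha_R=z\beta$ for the appropriate first letter $z$. Every factor $\delta$ of $\alpha_R$ is a suffix of some prefix of $\alpha_R$, but it suffices to compare $\delta$ with $\alpha_R$ directly: $\ol(\delta)=\ol(m_\delta)\le\ol(m_{\alpha_R})=m=\ol(\alpha_R)$, so either $\ol(\delta)\prec\ol(\alpha_R)$ and we are done, or $\ol(\delta)=m$, in which case $\delta$ contains a letter with original element $m$; since $\alpha_L\alpha_R=\alpha$ has all its $m$- and $m^*$-letters inside $\alpha_R$, and $\delta$ is a factor of $\alpha_R$ strictly shorter than $\alpha_R$ unless $\delta=\alpha_R$, a length comparison (or, in the equal-length case, $\delta=\alpha_R$) gives $\delta\preceq_{\R}\alpha_R$. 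Wait — I need $\delta$ a factor, not a suffix, so length strictly smaller is not automatic only when $\delta=\alpha_R$; if $|\delta|=|\alpha_R|$ then $\delta=\alpha_R$. Thus $\delta\preceq_{\R}\alpha_R$ in all cases. Finally, since any factor of $\alpha$ that contains the greatest letter $m_\alpha$ must contain all of $\alpha_R$'s first letter... more simply: a factor $\gamma$ of $\alpha$ with $\ol(\gamma)=m$ must start within $\alpha_R$ (as $\alpha_L$ has no $m$- or $m^*$-letters, by (a) the letter with original element $m$ in $\gamma$ lies in $\alpha_R$, but $\gamma$ being a \emph{factor} starting in $\alpha_L$ could still have $\ol(\gamma)<m$ — those are already smaller), so every factor $\gamma$ of $\alpha$ satisfies either $\ol(\gamma)<m$ (hence $\gamma\prec_{\R}\alpha_R$) or $\ol(\gamma)=m$, in which case $\gamma$ is a factor of $\alpha_R$ and we apply the first part of (d). Hence $\alpha_R$ is the greatest factor of $\alpha$.

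The main obstacle I anticipate is part (c)'s converse direction and the factor-versus-suffix subtlety in part (d): one must be careful that "greatest suffix" and "greatest factor" interact correctly with the lexicographic tie-breaker, and in particular rule out that some suffix reaching into $\alpha_L$ could tie in $\ol$ and length yet beat $\alpha_R$ lexicographically — this cannot happen precisely because such a suffix is strictly longer than $\alpha_R$ (it properly contains $\alpha_R$ as a suffix), so the length tier already decides it, which is why the hypotheses $\alpha_1(m)=\alpha_1(m^*)=0$ are exactly what is needed. Everything else is a routine unwinding of Definition~\ref{def:reduction-order} and Definition~\ref{def:reduction-factorization}.
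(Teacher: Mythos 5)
Your proposal is correct and takes essentially the same route as the paper: parts (a)--(c) are the same direct unwinding of the definitions (the greatest suffix is the one beginning at the leftmost occurrence of a letter with original element $m$), and for (d) you use exactly the paper's comparison --- any factor $\delta$ of $\alpha_{R}$ satisfies $\ol(\delta)\preceq \ol(m_{\alpha_{R}})=\ol(\alpha_{R})$ and $|\delta|\le|\alpha_{R}|$ with equality only when $\delta=\alpha_{R}$ --- stated directly rather than as the paper's contradiction argument with a maximal factor. Two harmless slips worth fixing: the equality $\ol(\delta)=\ol(m_{\delta})$ asserted in your part (d) is false in general (only $\ol(\delta)\preceq \ol(m_{\delta})$ holds, which is all the argument needs), and in part (c) the hedge about a letter of $\alpha_{1}$ being ``$m$-with-a-smaller-type'' refers to a case that cannot occur, since the only letters with original element $m$ are $m$ and $m^{*}$.
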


\begin{proof}
Parts (a)-(c) follow directly from the definition of the reduction-factorization.

For Part (d),  assume to the contrary that $\alpha_{R}$ is not the greatest factor of itself. Then there exists a factor $\beta$  of $\alpha_{R}$ such that $\alpha_{R} = \gamma\beta\eta$ with $\gamma, \eta\in \X$, and $\beta$ strictly greater than $\alpha_{R}$ in the reduction order. 
Since $\beta\eta \succeq_{\R} \beta$, it follows that  $\eta=1$,   hence $\alpha_{R}= \gamma\beta$. 

Assume now $\gamma\neq 1$. 
Note that $|\beta|<|\gamma\beta|$,  and by Part (a) we have 
$$\ol (\beta) \preceq \ol (m_{\alpha_{R}}) = \ol(\alpha_{R}) = \ol(\gamma\beta).$$
Therefore,  we have $\beta \prec_{\R} \gamma\beta$,  contradicting the assumption that  $\beta$ is the greatest factor of $\alpha_{R}$. Thus, $\gamma=1$ and $\beta=\alpha_{R}$, proving that $\alpha_{R}$ is indeed the greatest factor of itself.

Finally, since $\alpha_{L}(m)=0$ and $\alpha_{L}(m^{*})=0$ (for $m \in D$),  it follows that no factor of $\alpha_{L}$ can exceed  $\alpha_{R}$ in the reduction order, so $\alpha_R$ is also the greatest factor of $\alpha$.
\end{proof}

By Lemma \ref{lem:properties-alpha_R},  the following property holds for prime words:

\begin{proposition}\label{prop:prime-words}
Let $w_{1}, w_{2}\in \X_{p}$ be prime words such that  $w_{1} \succeq_{\R} w_{2}$. Then the product $w = w_{1}w_{2}$ is also a prime word,  and $w \succ_{\R} w_{1} \succeq_{\R} w_{2}$. 
\end{proposition}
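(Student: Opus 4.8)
The plan is to unwind the definition of prime word and apply Lemma~\ref{lem:properties-alpha_R} twice. Recall that a non-empty word $\gamma$ is prime exactly when $\gamma_R = \gamma$, equivalently (by Lemma~\ref{lem:properties-alpha_R}(b)) when $\ol(\gamma) = \ol(m_\gamma)$, i.e.\ the original element of the first letter of $\gamma$ equals the original element of the greatest letter occurring in $\gamma$. So the first step is to record: $\ol(w_1) = \ol(m_{w_1})$ and $\ol(w_2) = \ol(m_{w_2})$. Writing $m_i := \ol(m_{w_i})$ for $i=1,2$, the hypothesis $w_1 \succeq_{\R} w_2$ combined with Lemma~\ref{lem:prec_R-compatible-prec} (compatibility of $\prec_{\R}$ with $\prec$ on letters) and the definition of $\prec_{\R}$ on words gives $m_2 \preceq m_1$; in particular $\ol(w) = \ol(l(w)) = \ol(l(w_1)) = m_1$ since $w_1$ is a non-empty prefix of $w$.

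Next I would identify the greatest letter of $w = w_1 w_2$. Since every letter of $w$ is a letter of $w_1$ or of $w_2$, we have $m_w = \max_{\prec}\{m_{w_1}, m_{w_2}\}$, and because $\ol(m_{w_2}) = m_2 \preceq m_1 = \ol(m_{w_1})$, the greatest letter satisfies $\ol(m_w) = m_1$ (one should note that even in the edge case where $m_2 = m_1$ but $m_{w_2} = m_1^*$ while $m_{w_1} = m_1$, the value of $\ol(m_w)$ is still $m_1$, which is all that matters here). Combining with the previous paragraph, $\ol(w) = m_1 = \ol(m_w)$, so by Lemma~\ref{lem:properties-alpha_R}(b) we conclude $w_R = w$, i.e.\ $w$ is a prime word.

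Finally, for the order inequality $w \succ_{\R} w_1$: this is exactly the assertion from the remark after Definition~\ref{def:reduction-order} that a word is the greatest among all its prefixes, since $w_1$ is a proper prefix of $w$ (here $w_2 \neq 1$ because $w_2$ is a non-empty word). Chaining with the hypothesis gives $w \succ_{\R} w_1 \succeq_{\R} w_2$.

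I do not anticipate a serious obstacle here — the statement is essentially a bookkeeping consequence of parts (a) and (b) of Lemma~\ref{lem:properties-alpha_R} together with the prefix property of $\prec_{\R}$. The only point requiring a little care is the treatment of mirror letters when computing $\ol(m_w)$: one must argue at the level of original elements $\ol(\cdot)$ rather than the letters themselves, so that the possible discrepancy between $m_1$ and $m_1^*$ in $D^*$ does not cause trouble. Everything else is routine.
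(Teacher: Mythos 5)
Your proof is correct and follows essentially the same route as the paper, which simply derives the proposition from Lemma~\ref{lem:properties-alpha_R}: you use part (b) to characterize primeness via $\ol(\cdot)=\ol(m_{(\cdot)})$, check this for $w=w_1w_2$ (including the mirror-letter edge case), and get $w\succ_{\R}w_1$ from the proper-prefix property of $\prec_{\R}$. No gaps.
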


We show that every non-empty word possesses a unique factorization by prime words.
\begin{proposition}\label{prop:unique-factorization}
Every non-empty word $\omega$ in $\X$ admits a unique factorization as a strictly ascending product of prime words with respect to the reduction order, that is,
\begin{align*}
\omega = \omega_{n}\omega_{n-1}\cdots \omega_{1}, \qquad \omega_{n},\ldots,\omega_{1} \in \X_{p}, \qquad \omega_{n}\prec_{\R} \ldots \prec_{\R} \omega_{1}.
\end{align*} 
\end{proposition}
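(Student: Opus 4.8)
The plan is to prove existence by induction on the reduction order and uniqueness by a separate argument exploiting the ``greatest suffix'' property that defines the $r$-factorization.

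For \textbf{existence}, I would argue by (strong) induction on $\omega$ with respect to $\prec_{\R}$, which is legitimate since $(\X,\prec_{\R})$ is well-ordered by Lemma~\ref{lem:reduction-order-well}. If $\omega$ is prime, we are done with $n=1$. Otherwise, take the $r$-factorization $\rf(\omega) = (\omega_L,\omega_R)$ with $\omega_L \neq 1$ and $\omega_R \neq \omega$. Set $\omega_1 := \omega_R$, which is prime: indeed $(\omega_R)_R = \omega_R$ by Lemma~\ref{lem:properties-alpha_R}(d) (it is the greatest factor, hence the greatest suffix, of itself). Now $\omega_L$ is a proper prefix of $\omega$, so $\omega_L \prec_{\R} \omega$, and by the induction hypothesis $\omega_L = \omega_n \cdots \omega_2$ with each $\omega_i$ prime and $\omega_n \prec_{\R} \cdots \prec_{\R} \omega_2$. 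It remains to check $\omega_2 \prec_{\R} \omega_1$. Here $\omega_2 = (\omega_L)_R$ is a suffix of $\omega_L$, hence of $\omega$, and $\omega_1 = \omega_R$ is the \emph{greatest} suffix of $\omega$; moreover $\omega_2 \neq \omega_R$ since $\omega_2$ is a suffix of the proper prefix $\omega_L$ (so $m_\alpha$-considerations via Lemma~\ref{lem:properties-alpha_R}(a), $\omega_L(m)=0$, prevent $\omega_2$ from containing the greatest letter that $\omega_R$ must contain). Hence $\omega_2 \prec_{\R} \omega_1$, completing the ascending chain.

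For \textbf{uniqueness}, suppose $\omega = \omega_n \cdots \omega_1 = \tau_m \cdots \tau_1$ are two such factorizations. The key claim is that in any strictly ascending product of prime words, the last factor $\omega_1$ is forced to be $\omega_R$. By Proposition~\ref{prop:prime-words} applied repeatedly, $\omega_n \cdots \omega_2$ is itself a prime word strictly smaller than $\omega_1$ (it is a product of primes each $\prec_{\R}\omega_1$, with the $\succeq_{\R}$ hypothesis of the proposition satisfied along the chain), so $\omega = (\omega_n\cdots\omega_2)\,\omega_1$ is a factorization into a prime word times the prime $\omega_1$. I then argue that $\omega_1$ must equal the greatest suffix $\omega_R$: any suffix $\sigma$ of $\omega$ either lies entirely inside $\omega_1$ (hence $\sigma \preceq_{\R} \omega_1$ as $\omega_1$ is prime, so is the greatest factor of itself by Lemma~\ref{lem:properties-alpha_R}(d)) or properly contains $\omega_1$, in which case $\sigma = \rho\,\omega_1$ with $\rho$ a nonempty suffix of $\omega_n\cdots\omega_2$; comparing original elements, $o(\rho) \preceq o(\omega_n\cdots\omega_2) \prec_{\R}$-or-$=$ relative to $o(\omega_1)$, and a short case analysis using Lemma~\ref{lem:properties-alpha_R}(a)(b) and Lemma~\ref{lem:ac<bc} shows $\sigma \preceq_{\R} \omega_1$. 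Hence $\omega_1$ is the greatest suffix of $\omega$, i.e. $\omega_1 = \omega_R = \tau_1$. Cancelling $\omega_1 = \tau_1$ on the right leaves $\omega_n\cdots\omega_2 = \tau_m\cdots\tau_2$, a strictly smaller word, and induction on $\prec_{\R}$ finishes the uniqueness.

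The \textbf{main obstacle} is the uniqueness step, specifically verifying cleanly that the last prime factor $\omega_1$ in any ascending prime factorization coincides with the canonically defined greatest suffix $\omega_R$. The subtlety is that $\omega_R$ is defined by a global maximality property over \emph{all} suffixes, whereas the factorization only gives local information; bridging this requires Proposition~\ref{prop:prime-words} to collapse the prefix $\omega_n\cdots\omega_2$ into a single prime word smaller than $\omega_1$, and then a careful comparison of original elements (via Lemma~\ref{lem:properties-alpha_R}) to rule out any suffix straddling the boundary being larger than $\omega_1$. Once that lemma-level fact is in hand, both existence and uniqueness follow by routine well-founded induction.
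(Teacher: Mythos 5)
Your existence argument is essentially the paper's: you peel off $\omega_{R}$, which is prime, and recurse on the proper prefix $\omega_{L}$ (the paper writes this as an explicit iteration of $(\cdot)_{L}$ rather than induction on $\prec_{\R}$, but it is the same decomposition). One small slip: $\omega_{2}=(\omega_{L})_{R}$ is a suffix of $\omega_{L}$ and hence a \emph{factor} of $\omega$, not a suffix of $\omega$; your parenthetical repair via Lemma~\ref{lem:properties-alpha_R}(a),(d) (that $\omega_{L}$ contains neither $\ol(m_{\omega})$ nor $\ol(m_{\omega})^{*}$, while $\ol(\omega_{R})=\ol(m_{\omega})$) is what actually gives $\omega_{2}\prec_{\R}\omega_{1}$, and that part is fine.

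The uniqueness half has a genuine gap. First, Proposition~\ref{prop:prime-words} cannot be ``applied repeatedly along the chain'': its hypothesis is $w_{1}\succeq_{\R}w_{2}$ for the \emph{left} factor, which is exactly the opposite of what a strictly ascending factorization provides, and its conclusion fails in that situation --- with $x\prec y$ both letters are prime and $x\prec_{\R}y$, yet $xy$ is not prime since $\rf(xy)=(x,y)$. So the collapse of $\omega_{n}\cdots\omega_{2}$ into a single prime word ``strictly smaller than $\omega_{1}$'' is unjustified (even when such a product happens to be prime, Proposition~\ref{prop:prime-words} makes products \emph{larger} than their factors). Second, the inequality $o(\rho)\preceq o(\omega_{n}\cdots\omega_{2})$ for a suffix $\rho$ is false in general: $o$ only sees the first letter, and a suffix may begin with a larger letter. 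Most seriously, the key claim itself --- that the last factor of an \emph{arbitrary} strictly ascending prime factorization equals the greatest suffix $\omega_{R}$ --- cannot be deduced from $\omega_{i}\prec_{\R}\omega_{1}$ alone: already for $X=\{x\}$ every power $x^{k}$ is prime and $x\prec_{\R}x^{2}$, so $x\cdot x^{2}$ is a strictly ascending product of primes equal to $x^{3}$, whose greatest suffix is $x^{3}$ itself, not $x^{2}$. What is really needed, and what the paper's proof invokes through Lemma~\ref{lem:properties-alpha_R}(c), is that the factors to the left of the last one contain neither $\ol(m_{\omega})$ nor $\ol(m_{\omega})^{*}$, i.e.\ that the maximal originals strictly decrease from right to left, as they do in the canonical factorization built in the existence step. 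This avoidance property is strictly stronger than $\mu_{i}\prec_{\R}\mu_{1}$ (which also orders, by length, words with the same leading original, e.g.\ $x\prec_{\R}x^{2}$), your proposal never establishes it, and --- as the $x^{3}$ example shows --- no argument working only with $\prec_{\R}$-comparisons of the factors can establish it; the uniqueness must be understood (as in the paper's construction) with respect to factorizations having this stronger property.
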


\begin{proof}
Set $\omega_{1} = \omega_{R}$,  and define  inductively 
\begin{align*}
\omega_{i} = ((\ldots (\omega\underbrace{_{L})_{L}\ldots)_{L}}_{i-1})_{R} \quad \text{ if  } (\ldots (\omega\underbrace{_{L})_{L}\ldots)_{L}}_{i-1} \neq 1, \quad i \ge 2.
\end{align*}
Clearly, each $\omega_{i}$ lies in $\X_{p}$.
By Lemma \ref{lem:properties-alpha_R} (d), we have $\omega_{i+1} \prec_{\R} \omega_{i}$ for all $i$,  which establishes the existence of the desired factorization.  To prove the uniqueness, suppose that 
\begin{align*}
\omega = \mu_{s} \mu_{s-1} \cdots \mu_{1}, \quad \mu_{i} \in \X_{p},  \quad \mu_{s} \prec_{\R} \ldots \prec_{\R} \mu_{1}.  
\end{align*}
Observe that $\ol(\mu_{1}) = \ol(m_{\omega})$,  and that  the word $\mu_{s}\mu_{s-1} \cdots \mu_{2}$  contains neither  $\ol(m_{\omega})$ nor $\ol(m_{\omega})^{*}$ when $\ol(m_{\omega}) \in D$.  Hence,  by Lemma \ref{lem:properties-alpha_R}, we obtain $\mu_{1} = \omega_{R} = \omega_{1}$. Applying the same argument recursively yields  $\mu_{i} = \omega_{i}$ for $i> 1$, completing the proof.
\end{proof}

Note that,  by Proposition \ref{prop:prime-words}, the factorization in Proposition  \ref{prop:unique-factorization} has the minimal possible length among all factorizations of a non-empty word into a product of prime words.

\section{Irreducible words with respect to the reduction order}\label{sec:3}

In this section, we study reducible words and irreducible words  defined with respect to the reduction order. 
Throughout, we fix a well-ordered set $X=O\cup D^{*},  D\subset O$ equipped with an order $\prec$, and we consider the induced reduction order $\prec_{\R}$ on the free monoid $\X$.

For  a non-zero polynomial $f\in \K\X$, the \textit{leading word} of $f$,  denoted  $\LW(f)$, is  the greatest word (with respect to  $\prec_{\R}$) appearing in $f$ with a nonzero coefficient. 

Let $I$ be an ideal of $\K\X$. A word $\alpha\in\X$ is called \textit{$I$-reducible} if $\LW(f)=\alpha$ for some polynomial $f\in I$. A word in $\X$ is called \textit{$I$-irreducible} if it is not \textit{$I$-reducible}. 

We denote
$$
X_{I} := \{x \in X \mid x \text{ is \textit{I}-irreducible} \}, \quad  
\X_{I}  := \{\omega \in \X \mid \omega \text{ is \textit{I}-irreducible} \}.
$$

\begin{example}
Let $(X, \prec)$ be a well-ordered set, and let $x\in D$ with its partner $x^{*}\in D^{*}$. Assume  there is an ideal $I$ of $\K\X$ such that
\begin{align*}
x^{*}x \in x + x^{*} +I, \quad xx^{*} \in x + x^{*} + I.
\end{align*} 
Recall  from  Example \ref{eg:x-x*}  that 
$$x^{*}x \succ_{\R} xx^{*} \succ_{\R} x^{*} \succ_{\R} x.$$
Thus,  we have 
$$\LW(xx^{*} - x - x^{*}) = xx^{*}\ \text{and}\ \LW(x^{*}x - x - x^{*}) = x^{*}x,$$
 so both  $xx^{*}$ and $x^{*}x$ are $I$-reducible.
\end{example}

\begin{example}\label{eg:xy^n}
Let $X = \{x,y\}$ with $x \prec y$, and let $I$ be the ideal of $\K\X$ generated by the relation $y-x^{2}$. Then,  we have: 
\begin{itemize}
    \item $y + I = x^{2} + I$ and $y \succ_{\R} x^{2}$.
    \item $yx + I = x^{3} + I = xy + I$ and  $yx \succ_{\R} x^{3} \succ_{\R} xy $.
    \item $yx^{2} + I = y^{2} + I =  x^{4} + I = xyx + I  = x^{2}y + I$ and  $yx^{2} \succ_{\R} y^{2} \succ_{\R} x^{4} \succ_{\R} xyx  \succ_{\R} x^{2}y.$
\end{itemize}
Hence,  $x, x^{2}, xy, x^{2}y$ are $I$-irreducible. It is not difficult to check that 
$$X_{I} = \{ x\}, \quad   \X_{I} = \{1, ~x, ~x^{2}, ~x^{n}y \mid n\ge 1 \}.$$
The residue classes of  $\X_{I}$ then  form a basis of the quotient algebra $\K\X/(y-x^{2})$.
\end{example}

We now show that every word can be expressed as a linear combination of irreducible words.
\begin{proposition}\label{prop:irrws-form-basis}
Let $I$ be an ideal of $\K\X$. Then the  residue classes of $I$-irreducible words $\{\omega + I \mid \omega\in \X_{I} \}$ form a basis of $\K\X/ I$.
\end{proposition}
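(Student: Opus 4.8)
The plan is to show the two halves of ``basis'' separately: spanning and linear independence, both using that $\prec_{\R}$ is a well order (Lemma \ref{lem:reduction-order-well}).

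First I would prove that the classes $\{\omega+I\mid \omega\in\X_I\}$ span $\K\X/I$. It suffices to show every word $\alpha\in\X$ lies in $\mathrm{span}_\K\{\omega\mid\omega\in\X_I\}+I$; then extend linearly. Suppose not, and among all counterexamples pick $\alpha$ with $\prec_{\R}$-least leading behaviour --- precisely, let $S\subseteq\X$ be the set of words $\alpha$ for which $\alpha\notin\mathrm{span}_\K\{\omega\mid\omega\in\X_I\}+I$, and let $\alpha$ be the $\prec_{\R}$-least element of $S$, which exists by Lemma \ref{lem:reduction-order-well}. If $\alpha$ were $I$-irreducible, then $\alpha\in\X_I$ and trivially $\alpha\in\mathrm{span}+I$, a contradiction; so $\alpha$ is $I$-reducible. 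By definition there is $f\in I$ with $\LW(f)=\alpha$; after scaling, $f=\alpha-\sum_i k_i\beta_i$ where each $\beta_i\prec_{\R}\alpha$ (every other word appearing in $f$ is strictly smaller). Hence $\alpha\equiv\sum_i k_i\beta_i \pmod I$. Each $\beta_i\prec_{\R}\alpha$, so by minimality of $\alpha$ none of the $\beta_i$ lies in $S$, i.e. each $\beta_i\in\mathrm{span}_\K\{\omega\mid\omega\in\X_I\}+I$; therefore so is $\alpha$, contradicting $\alpha\in S$. Thus $S=\emptyset$ and the irreducible classes span.

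Next I would prove linear independence. Suppose $\sum_{i=1}^m k_i\,\omega_i\in I$ with $k_i\in\K^*$, $\omega_i\in\X_I$ distinct, and $m\ge 1$ minimal; seek a contradiction. Reorder so that $\omega_1$ is the $\prec_{\R}$-greatest among $\omega_1,\dots,\omega_m$. Then $f:=\sum_i k_i\omega_i$ is a nonzero element of $I$ with $\LW(f)=\omega_1$ (since $\omega_1$ strictly dominates the others and has nonzero coefficient). But that makes $\omega_1$ an $I$-reducible word, contradicting $\omega_1\in\X_I$. Hence no such relation exists, and the classes are linearly independent. (The minimality of $m$ is not even needed here; the key point is simply that a finite $\K$-linear combination of distinct words has a well-defined $\prec_{\R}$-greatest word as its leading word.)

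Combining the two parts shows $\{\omega+I\mid\omega\in\X_I\}$ is a basis of $\K\X/I$. The only subtlety --- and the one step worth stating carefully --- is the well-foundedness argument in the spanning half: one must invoke Lemma \ref{lem:reduction-order-well} to justify the ``$\prec_{\R}$-least counterexample'' and check that after normalizing $f$ so its leading coefficient is $1$, all remaining words are genuinely $\prec_{\R}$-smaller than $\alpha$, which is exactly the definition of $\LW(f)=\alpha$. Everything else is routine.
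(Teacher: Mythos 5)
Your proposal is correct and follows essentially the same route as the paper: linear independence via the observation that a nontrivial relation among irreducible words would have an irreducible leading word, and spanning via induction along the well-order $\prec_{\R}$ (your least-counterexample argument is just the contrapositive form of the paper's induction, and your explicit normalization of $f$ is a harmless refinement). No gaps.
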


\begin{proof}
We first show that $\{\omega + I \mid \omega\in \X_{I} \}$ is linearly independent. Suppose, to the contrary, that the  residue classes of $I$-irreducible words satisfy  a non-trivial linear relation. Then there exists a polynomial
\begin{align*}
p= \sum_{i=1}^{n} k_{i}\omega_{i} \in I,
\end{align*}
where $\omega_{1},\ldots,\omega_{n}$ are distinct $I$-irreducible words in $\X$. 
Reorder the terms so that  $\omega_{1} \succ_{\R} \ldots \succ_{\R} \omega_{n}$. Then $\LW(p)=\omega_{1}$, contradicting the fact that  $\omega_1$ is $I$-reducible. 

Next we show that every word in $\X$ is a linear combination of $I$-irreducible words. 
We proceed by induction on the reduction order $\prec_{\R}$. 

The empty word $1$ is clearly $I$-irreducible, so that the statement holds for $1$. 
Now let $\alpha$ be a $I$-reducible word. 
Then there is a polynomial $f\in I$ with $\LW(f)=\alpha$. Since 
$$\alpha + I = (\alpha -f) + I$$ 
and $\alpha -f$ is a linear combination of words strictly smaller than $\alpha$ under $\prec_{\R}$, the induction hypothesis implies that $\alpha + I$ is a linear combination of the residue classes of $I$-irreducible words.  This completes the proof.
\end{proof}

Next let $\K\X$ be the augmented free algebra on $X$ with the canonical augmentation $\epsilon:\K\X \rightarrow \K$ ($\epsilon(X)= 0$). 
We show that  prefixes of an irreducible words  are also irreducible.

\begin{proposition}\label{prop:prefix-irrws}
Let $(\K\X,\epsilon)$ be the augmented free algebra, and let  $I \subseteq \ker \epsilon$ be a proper ideal of $\K\X$. Then every prefix of an $I$-irreducible word in $\X$ is itself $I$-irreducible.
\end{proposition}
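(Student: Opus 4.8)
The plan is to argue by contradiction. Suppose $\beta$ is a prefix of an $I$-irreducible word $\alpha$, say $\alpha = \beta\gamma$ for some $\gamma \in \X$, and suppose $\beta$ is $I$-reducible. Then there is a polynomial $f \in I$ with $\LW(f) = \beta$, so we may write $f = k\beta + (\text{lower terms})$ with $k \in \K^{*}$, where every word occurring in the lower-order part is $\prec_{\R} \beta$. The natural move is to multiply $f$ on the right by $\gamma$, obtaining $f\gamma \in I$ (since $I$ is an ideal), and then to identify $\LW(f\gamma)$.

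The key step is to show that $\LW(f\gamma) = \beta\gamma = \alpha$, which immediately contradicts the $I$-irreducibility of $\alpha$. This is where Lemma~\ref{lem:ac<bc} does the work: if $\delta$ is any word occurring in $f$ with $\delta \prec_{\R} \beta$, then $\delta\gamma \prec_{\R} \beta\gamma$. Hence right-multiplication by $\gamma$ is strictly order-preserving on the words appearing in $f$, so the term $k\beta\gamma$ remains strictly the largest and cannot be cancelled by any other term $k_i \delta_i \gamma$ (distinct words $\delta_i$ give distinct $\delta_i\gamma$, and all are strictly smaller). Therefore $\LW(f\gamma) = \beta\gamma = \alpha$ with nonzero coefficient $k$, contradicting irreducibility of $\alpha$. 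I would also note the degenerate case $\gamma = 1$: then $\beta = \alpha$ and the claim is trivial, so we may assume $\gamma \neq 1$, though in fact the argument above works uniformly with $f\cdot 1 = f$.

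The main obstacle — really the only subtlety — is making sure that no cancellation occurs among the lower-order terms after multiplication by $\gamma$, i.e.\ that distinct words $\delta_i$ appearing in $f$ cannot collide as $\delta_i\gamma = \delta_j\gamma$; but this is immediate from the cancellativity of the free monoid $\X$. One should also confirm that the hypotheses about $\epsilon$ and $I \subseteq \ker\epsilon$, while stated in the proposition, are not actually needed for this particular argument — they are part of the standing setup of the section and are harmless. Thus the proof reduces to the single application of Lemma~\ref{lem:ac<bc} together with the observation that the leading word of a product $f\gamma$ is $\LW(f)\gamma$ whenever $\gamma$ is on the right, which is exactly the right-compatibility guaranteed by that lemma.
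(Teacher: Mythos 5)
Your overall strategy is exactly the paper's: assume the prefix $\beta$ of the irreducible word $\alpha=\beta\gamma$ is reducible, rewrite $\beta$ modulo $I$ as a combination of words strictly smaller in $\prec_{\R}$, right-multiply by $\gamma$, and invoke Lemma~\ref{lem:ac<bc} to see that $\LW$ of the resulting element of $I$ is $\beta\gamma=\alpha$. However, there is a genuine gap in your side remark that the hypotheses $\epsilon$ and $I\subseteq\ker\epsilon$ ``are not actually needed for this particular argument.'' They are needed, and precisely at the step where you apply Lemma~\ref{lem:ac<bc}: that lemma is stated only for \emph{non-empty} words, and your ``lower terms'' $\delta$ of $f$ could a priori include the empty word $1$. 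For $\delta=1$ one has $\delta\gamma=\gamma$, and it is simply false in general that $\gamma\prec_{\R}\beta\gamma$ (the reduction order is not left-compatible): e.g.\ with $x\prec y$, $\beta=x$, $\gamma=y$ one has $y\succ_{\R}xy$, so a constant term in $f$ would destroy the conclusion $\LW(f\gamma)=\alpha$. The role of $I\subseteq\ker\epsilon$ is exactly to rule this out: since $\epsilon$ vanishes on $\X^{+}$ and $\epsilon(f)=0$, the coefficient of the empty word in $f$ is zero, so every lower-order word $\delta$ lies in $\X^{+}$ and Lemma~\ref{lem:ac<bc} applies. This is how the paper phrases the step (``By the augmentation $\epsilon$, there exist words $\eta\in\X^{+}$\dots'').

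Moreover, the statement itself fails without that hypothesis, so the omission is not merely cosmetic. Take $X=\{x,y\}$ with $x\prec y$, $D=\emptyset$, and let $I$ be the two-sided ideal generated by $x-1$ (note $\epsilon(x-1)=-1\neq 0$). Then $x$ is $I$-reducible, but $xy$ is $I$-irreducible: the only words $\prec_{\R}xy$ are $1$, $x$, $x^{2}$, and an element $k\,xy+ax^{2}+bx+c\in I$ with $k\neq 0$ would give $ky+a+b+c=0$ in $\K\X/I\cong\K[y]$, which is impossible (and $xy-y\in I$ has leading word $y\succ_{\R}xy$, not $xy$). So the irreducible word $xy$ has the reducible prefix $x$. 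Your proof becomes correct once you delete the claim that the augmentation is dispensable and instead use $I\subseteq\ker\epsilon$ to ensure all lower-order terms of $f$ are non-empty words before applying Lemma~\ref{lem:ac<bc}; with that repair it coincides with the paper's argument.
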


\begin{proof}
Let $\alpha$ be an $I$-irreducible word, and suppose $\alpha=\beta\gamma$ with  $\beta$ a prefix of $\alpha$.  Assume, to the contrary, that $\beta$ is $I$-reducible. 

By the augmentation $\epsilon$,  there exist words  $\eta \in \X^{+}$ and scalars $k_{\eta} \in \K$ such that 
\begin{align*}
\beta \in \sum_{\eta} k_{\eta} \eta + I, \qquad  \eta \prec_{\R} \beta.
\end{align*}
By Lemma \ref{lem:ac<bc}, multiplying by $\gamma$  preserves the inequality:
\begin{align*}
\alpha=\beta\gamma \in \sum_{\eta} k_{\eta} \eta\gamma + I, \qquad \eta\gamma \prec_{\R} \beta\gamma=\alpha.
\end{align*}
Hence, the polynomial 
$$p=\alpha - \sum_{\eta} k_{\eta} \eta\gamma \in I$$
satisfies $\LW(p) = \alpha$, contradicting  the assumption that $\alpha$ is $I$-irreducible. Hence, every prefix of $\alpha$ must be $I$-irreducible.
\end{proof}

\begin{remark}\label{rmk:suffix-irrws-false}
The analogue for suffixes of an $I$-irreducible word does not hold. For example,  in Example \ref{eg:xy^n}, each word  $x^{n}y$ ($n\geq 1$)  is $I$-irreducible, while its suffix $y$ is $I$-reducible. Thus an $I$-irreducible word may have reducible suffixes. 
\end{remark}

For a letter $y\in X$, set
$$
X^{\prec y} :=\{x\in X \mid x \prec y\}, \quad  X_{I}^{\prec y} :=\{x\in X_{I} \mid x \prec y \}.$$

We now record a basic property of irreducible letters in $O$.
\begin{lemma}\label{lem:irrl}
Let $(\K\X,\epsilon)$ be the augmented free algebra,  and let $I \subseteq \ker \epsilon$ be a proper ideal of $\K\X$.  If $y\in O$ is an $I$-irreducible letter, then 
\begin{align*}
y\notin X^{\prec y}\cdot \K\X + I.
\end{align*}
\end{lemma}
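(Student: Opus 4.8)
The plan is to argue by contradiction, in close analogy with the proof of Proposition~\ref{prop:prefix-irrws}. Assuming $y$ lies in $X^{\prec y}\cdot\K\X + I$, I would produce a polynomial in $I$ whose leading word under $\prec_{\R}$ is exactly $y$, which is impossible since $y$ is $I$-irreducible.

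Concretely, I would start from an expression $y = \sum_{x\in X^{\prec y}} x\,p_{x} + r$ with $p_{x}\in\K\X$ (almost all zero) and $r\in I$, and consider
\[
q := y - \sum_{x\in X^{\prec y}} x\,p_{x} \;=\; r \in I.
\]
The decisive step is the word comparison: every non-empty word $\gamma$ occurring in $\sum_{x\in X^{\prec y}} x\,p_{x}$ has first letter $l(\gamma)=x$ for some $x\prec y$, so since $y\in O$, Corollary~\ref{cor:prec_R-compatible-prec-2} gives $\gamma\prec_{\R} y$, and in particular $\gamma\neq y$. Hence $y$ occurs in $q$ with coefficient $1$ while every other word of $q$ is $\prec_{\R} y$, so $\LW(q)=y$ with $q\in I$ --- contradicting the $I$-irreducibility of $y$, and establishing the lemma.

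I do not expect a genuine obstacle; the one point that requires care is that the hypothesis $y\in O$ is truly needed, precisely because it is what makes Corollary~\ref{cor:prec_R-compatible-prec-2} applicable. For a mirror letter $y=x^{*}\in D^{*}$ the statement fails: a word $x\beta$ with $|\beta|\ge 1$ lies in $X^{\prec y}\cdot\K\X$ but satisfies $\ol(x\beta)=x=\ol(y)$ and $|x\beta|>|y|$, so that $x\beta\succ_{\R} y$, and the leading-word comparison above collapses. (The augmentation hypothesis $I\subseteq\ker\epsilon$ is inherited from the ambient setup and is not actually invoked here, since each term $x\,p_{x}$ is automatically a linear combination of non-empty words.)
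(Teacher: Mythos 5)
Your proposal is correct and follows essentially the same route as the paper: both argue by contradiction, forming $q = y - \sum_{x\prec y} x\,p_{x} \in I$ and using the order comparison (the paper via $\ol(xp_x)=\ol(x)\preceq x\prec y$, you via Corollary~\ref{cor:prec_R-compatible-prec-2}) to conclude $\LW(q)=y$, contradicting $I$-irreducibility. Your side remarks (the role of $y\in O$ and the fact that the augmentation is only used for the paper's preliminary observation that the right ideal is proper) are also accurate.
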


\begin{proof}
Note first that $X^{\prec y}\cdot \K\X + I\neq \K\X$,  since $1 \notin X^{\prec y}\cdot \K\X + I$ by the augmentation $\epsilon$.

Suppose that the claim fails. Then there exists an $I$-irreducible letter $y \in O$ such that 
$$y \in X^{\prec y}\cdot \K\X + I.$$
 Thus, we may write  
\begin{align*}
p=y - \sum_{i} k_{i}x_{i}f_{i} \in I, 
\end{align*}
where  each $x_{i}\in X^{\prec y}$, $f_{i}\in \K\X$,  and $k_{i} \in \K$.

Since $o(x_{i}f_{i}) = o(x_{i}) \preceq x_{i} \prec y = \ol(y)$ for all $i$,  every word appearing in  $\sum_{i} k_{i}x_{i}f_{i}$ is strictly smaller than $y$ w.r.t. $\prec_{\R}$. Consequently, $\LW(p)=y$,  contradicting the assumption that $y$ is $I$-irreducible. Hence, the claim follows.
\end{proof}

Given an augmented algebra $(A,\epsilon_{A})$ over $\K$. Choose a generating set $X$ of $A$ such that $X\subseteq \ker \epsilon_{A}$. 
By a slight abuse of notation, we also let $X$  generate a free algebra $\K\X$. Let  $\pi: \K\X \rightarrow A$  be  the canonical projection determined by the assignment map $f_{X}: X\rightarrow A$. Then $(\K\X,\epsilon)$ becomes an augmented algebra,  where its augmentation $\epsilon$ is induced by $\epsilon_{A}$ via the relation $\epsilon= \epsilon_{A}\circ \pi$. 
Let $I:=\ker\pi$. It is clear that $X, I \subseteq \ker \epsilon$.

Now assume $X = O \cup D^{*}$.  We will show that the corresponding set of irreducible letters is finite whenever $A$ is right  Noetherian.

\begin{lemma}\label{lem:Noetherian-irrls-finite}
Let $(A,\epsilon_{A})$ be an augmented algebra with a generating set $X=  O \cup D^{*}$, and let $\pi: \K\X \rightarrow A$ be the canonical projection and $I=\ker \pi$. 
If $A$ is right Noetherian, then the set $X_{I} \cap O$ is finite. Consequently, if the set $X_{I} \cap D^{*}$ is finite as well, then $X_{I}$ is finite.
\end{lemma}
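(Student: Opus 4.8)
The plan is to contradict the ascending chain condition on right ideals of $A$. Suppose for contradiction that $X_{I}\cap O$ is infinite. Since $\prec$ restricts to the well-order $<_{O}$ on $O$, the infinite subset $X_{I}\cap O$ is well-ordered; for each $n\ge 1$ let $y_{n}$ denote its $n$-th smallest element, so that $y_{1}\prec y_{2}\prec y_{3}\prec\cdots$ is a strictly increasing sequence of $I$-irreducible letters lying in $O$.

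Next I would attach to this sequence the right ideals $J_{n}:=\sum_{i=1}^{n}\pi(y_{i})A\subseteq A$, so that $J_{1}\subseteq J_{2}\subseteq\cdots$, and show that every inclusion is proper. Fix $n\ge 1$. For each $i\le n$ we have $y_{i}\prec y_{n+1}$, hence $y_{i}\in X^{\prec y_{n+1}}$ and therefore $y_{i}\K\X\subseteq X^{\prec y_{n+1}}\cdot\K\X$; applying the surjection $\pi$ gives $\pi(y_{i})A\subseteq\pi\bigl(X^{\prec y_{n+1}}\cdot\K\X\bigr)$, and summing over $i$ yields $J_{n}\subseteq\pi\bigl(X^{\prec y_{n+1}}\cdot\K\X\bigr)$. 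On the other hand, $y_{n+1}$ is an $I$-irreducible letter in $O$, so Lemma~\ref{lem:irrl} gives $y_{n+1}\notin X^{\prec y_{n+1}}\cdot\K\X+I$; since $\ker\pi=I$, this says exactly $\pi(y_{n+1})\notin\pi\bigl(X^{\prec y_{n+1}}\cdot\K\X\bigr)$, and in particular $\pi(y_{n+1})\notin J_{n}$. Thus $J_{n}\subsetneq J_{n+1}$ for all $n$, producing an infinite strictly ascending chain of right ideals of $A$ and contradicting the hypothesis that $A$ is right Noetherian. Hence $X_{I}\cap O$ is finite; and since $X_{I}=(X_{I}\cap O)\cup(X_{I}\cap D^{*})$, finiteness of $X_{I}\cap D^{*}$ then forces $X_{I}$ to be finite.

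I expect the only delicate point --- and the step to state carefully --- to be the translation of Lemma~\ref{lem:irrl} from the free algebra $\K\X$ to right ideals of $A$: one must note that $X^{\prec y}\cdot\K\X$ is a right ideal of $\K\X$, that its image under $\pi$ is precisely the right ideal $\sum_{x\prec y}\pi(x)A$ of $A$ (using surjectivity of $\pi$), that this image contains $J_{n}$ when $y=y_{n+1}$, and that $\ker\pi=I$ is exactly what lets the non-membership ``$y\notin(\cdots)+I$'' pass to ``$\pi(y)\notin\pi(\cdots)$''. Once this dictionary is in place, the remainder is routine bookkeeping with the well-order on $O$ and the ascending chain condition, and no hypothesis beyond right Noetherianity of $A$ is used.
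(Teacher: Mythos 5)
Your proof is correct and follows essentially the same route as the paper: both arguments build a strictly ascending chain of right ideals from an infinite increasing sequence of $I$-irreducible letters in $O$, with properness of each inclusion supplied by Lemma~\ref{lem:irrl} and the identification $I=\ker\pi$, contradicting right Noetherianity of $A$. The only cosmetic difference is that the paper first forms the chain $\{x_{1},\ldots,x_{n}\}\cdot\K\X+I$ in $\K\X$ and then pushes it forward by $\pi$, whereas you define the ideals $J_{n}$ directly in $A$ and pull the non-membership back through $\ker\pi=I$; the two formulations are equivalent.
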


\begin{proof}
 Assume, toward a contradiction,  that $|X_{I} \cap O|= +\infty$. 
 Choose distinct $I$-irreducible letters
 $$x_{1} \prec x_{2} \prec \ldots \prec x_{n} \prec \ldots$$
  in $X_{I} \cap O$, and consider the ascending chain of right ideals in $\K\X$:
\begin{align*}
x_{1} \cdot \K\X + I \subseteq \{x_{1},x_{2}\} \cdot \K\X + I \subseteq \ldots \subseteq \{x_{1},\ldots,x_{n}\} \cdot \K\X + I \subseteq \ldots,
\end{align*}

By the augmentation $\epsilon$, we have  $1 \notin \{x_{1},\ldots,x_{n}\} \cdot \K\X + I$, so each 
 right ideal in the chain is proper.  Lemma \ref{lem:irrl} implies that 
\begin{align*}
x_{n+1} \notin \{x_{1},\ldots,x_{n}\} \cdot \K\X + I, \qquad n\ge 1.
\end{align*} 
and hence  the above chain is a strictly ascending chain.

Since $\pi$ is an algebra map, applying $\pi$ yields a strictly ascending chain of right ideals of $A$:
\begin{align*}
\pi(x_{1}) \cdot  A  \subsetneq \{\pi(x_{1}),\pi(x_{2})\} \cdot A \subsetneq \ldots \subsetneq \{\pi(x_{1}),\ldots,\pi(x_{n})\} \cdot  A \subsetneq \ldots,
\end{align*}
contradicting the Noetherian property of $A$. Therefore, $X_{I} \cap O$ must be finite. The final assertion is immediate.
\end{proof}

\section{Comultiplications on words}\label{sec:4}

In this section, we use the reduction order, the reduction-factorization and prime words to study comultiplications on arbitrary words.

Suppose that $X$ is a set equipped with a map $t: X \rightarrow \mathbb{N}$. For $m \ge 0$, we write
\begin{align*}
X_{m} &:= \{x\in X  \mid t(x)=m\},\\
X_{+} &:= \{x\in X  \mid  t(x)\ge 1\}.
\end{align*}  
Then the free algebra $\K\X$ admits a grading 
$$\K\X=\bigoplus_{n= 0}^{\infty} \K\X_{n}$$
where $\deg(x) = t(x)$ for every letter $x\in X$. Note that $\K\X_{0}$ is not necessarily equal to the base field $\K$.

\begin{example}\label{example:filtration-t-degree}
Let $A$ be a filtered algebra over $\K$ with filtration $\{A_{n}\}_{n\ge 0}$. 
Choose a generating set $X$ of $A$ together with an assignment $f_{X}: X \rightarrow A$. Let $\pi: \K\X \rightarrow A$ be the canonical algebra map determined by $f_{X}$.
For each letter $x\in X$, define a map $t:X \rightarrow \mathbb{N}$ by 
\begin{align*}
t(x) = \min \{n \mid f_{X}(x)\in A_{n}\}.
\end{align*}
Then $\K\X=\bigoplus_{n= 0}^{\infty} \K\X_{n}$ becomes a graded algebra with $\deg(x) = t(x)$ for $x\in X$. 
The map $t$ extends naturally to  words by 
\begin{align*}
t(\omega) =  \min \{n \mid \pi(\omega)\in A_{n}\}, \quad \omega \in \X.
\end{align*}
Note that $t(xy) \le t(x) + t(y)$ for all $x,y\in X$.
Thus, for a word  $\omega = x_{1} \cdots x_{n}$, 
\begin{align*}
t(\omega) \le \sum_{i=1}^{n} t(x_{i}) = \sum_{i=1}^{n} \deg(x_{i}) = \deg(\omega).
\end{align*}
\end{example}

Assume throughout this section that $X = O \cup D^{*}$ is a well-ordered set with order $\prec$ and that we are given a map $t: X \rightarrow \mathbb{N}$ satisfying:
\begin{align*}
t(x^{*}) = t(x), \quad x \in D. 
\end{align*}
We now   introduce a comultiplication on the free algebra $\K\X$.
\begin{definition}
A \textit{comultiplication} on $\K\X$ is an algebra map $\Delta:\K\X \rightarrow \K\X \otimes \K\X$. 
We call  $\Delta$  \textit{skew-triangular} if for each $x\in X$, there exist elements
$$z_{x} \in X_{0}, ~x'\in \K\X^{+}, ~x'' \in X,$$ 
such that   
\begin{align*}
\Delta(x) &= x \otimes 1 + (1-z_{x}) \otimes x + \sum_{ t(x'') < t(x)}  x' \otimes x'', \quad x'' \prec x.
\end{align*}
For $x_{0} \in X_{0}$, this forces  $z_{x_{0}} = x_{0}$,  and hence 
$$\Delta(x_{0}) = x_{0} \otimes 1 + (1 - x_{0}) \otimes x_{0}.$$
\end{definition}

Note that a skew-triangular comultiplication is not necessarily coassociative.

For a word $\omega= x_{1}\cdots x_{n} \in \X$ with $x_{1},\ldots,x_{n}\in X$, we write
\[
z_{\omega} := \prod_{i=1}^{n}z_{x_{i}},\quad   
[1-z_{\omega}] := \prod_{i=1}^{n}(1-z_{x_{i}}) \in \K\X.
\]

\begin{lemma}\label{lem:deg+length}
Let $\Delta$ be a skew-triangular comultiplication on $\K\X$, and let $f\in \X^{+}$ satisfy
\begin{align*}
\Delta(f) = f \otimes 1 + [1-z_{f}] \otimes f + \sum_{} f' \otimes f'', \quad f'\in \K\X^{+}, ~f''\in \X^{+}.
\end{align*}
Then $\deg(f'') \le \deg(f)$, $|f''| \le |f|$ and $\deg(f'') + |f''| < \deg(f) + |f|$.
\end{lemma}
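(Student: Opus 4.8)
The plan is to argue by induction on the length $|f| = n$, using the fact that $\Delta$ is an algebra map so that $\Delta(f)$ can be computed from $\Delta(f')$ and $\Delta(x_n)$ when $f = f' x_n$. The base case $n = 1$ is essentially the definition of a skew-triangular comultiplication: for a letter $x$, every term $x''$ appearing on the right satisfies $t(x'') < t(x)$, hence $\deg(x'') < \deg(x)$, while $|x''| = 1 = |x|$, so all three inequalities $\deg(x'') \le \deg(x)$, $|x''| \le |x|$, and $\deg(x'') + |x''| < \deg(x) + |x|$ hold (the last one strictly, from $t(x'') < t(x)$). One should note that here $f''$ ranges over the ``error'' terms $x''$, not over $f$ itself or the term attached to $[1-z_f]$; the statement is about the summands in $\sum f' \otimes f''$.

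For the inductive step, write $f = g x$ with $g \in \X^{+}$ of length $n-1$ and $x \in X$. Since $\Delta$ is multiplicative, $\Delta(f) = \Delta(g)\Delta(x)$, and I would expand this product using the inductive description of $\Delta(g)$ (with error terms $g''$ satisfying the three inequalities relative to $g$) and the definitional description of $\Delta(x)$ (with error terms $x''$ satisfying $t(x'') < t(x)$, $x'' \prec x$). The leading piece $g \otimes 1$ times $x \otimes 1$ produces $f \otimes 1 = gx \otimes 1$; the piece $[1-z_g]\otimes g$ times $(1-z_x)\otimes x$ produces $[1-z_f] \otimes gx = [1-z_f]\otimes f$ (using $z_f = z_g z_x$ and that $z_x, 1-z_x \in \K\X_0$ commute appropriately in the second tensor slot since $\K\X_0$ contributes degree $0$); everything else is an error term of the form $(\text{something in } \K\X^{+}) \otimes f''$ where $f''$ is one of: $g'' x$, $g \cdot x''$ (up to the $1-z_x$ factor which only affects the first slot), $g'' x''$, or $g''$ hit by a degree-$0$ factor. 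In each case I check $\deg(f'') \le \deg(g) + \deg(x) \le \deg(f)$ — using $\deg(g'') \le \deg(g)$, $\deg(x'') < \deg(x)$, and subadditivity of $\deg$ under concatenation in the free algebra, noting $\deg(gx) = \deg(g) + \deg(x)$ exactly — and similarly $|f''| \le |g| + |x| = |f|$, with the combined quantity $\deg(f'') + |f''|$ strictly smaller than $\deg(f) + |f|$ because in every error term either the degree drops strictly or the length drops strictly (an error term from $\Delta(g)$ contributes $g''$ with $\deg(g'') + |g''| < \deg(g) + |g|$, and multiplying by $x$ or by $x''$ or by nothing keeps a strict gap).

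The main obstacle — or rather the point requiring care — is the bookkeeping that the ``good'' terms $f \otimes 1$ and $[1-z_f] \otimes f$ are exactly reproduced and that literally every remaining term in the expansion of $\Delta(g)\Delta(x)$ falls into the error category with the strict inequality intact; in particular one must handle the cross term $[1-z_g]\otimes g$ against $x \otimes 1$ (giving $[1-z_g] x \otimes g$, an error term with $f'' = g$, for which $\deg(g) < \deg(f)$ or $|g| < |f|$ — indeed both, since $\deg(x) \ge 0$ and $|x| = 1 > 0$) and the term $g \otimes 1$ against $(1-z_x)\otimes x$ (giving $g(1-z_x) \otimes x$, error term $f'' = x$, with $\deg(x) \le \deg(f)$, $|x| = 1 < n+\text{?}$... here $|x| = 1 \le n = |f|$ and $\deg(x) + 1 < \deg(g) + \deg(x) + n$ provided $n \ge 2$, while for $n = 1$ this term was already accounted for in the base case). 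A clean way to organize this is to treat $|f| = n$ with $n \ge 1$ uniformly and observe that for $n \ge 2$ the term $g \otimes 1$ against $(1-z_x)\otimes x$ has $f'' = x$ with $\deg(x) + |x| = \deg(x) + 1 \le \deg(g) + \deg(x) + n - 1 + 1$, strict since $\deg(g) + n - 1 \ge n - 1 \ge 1 > 0$. Once this case analysis is laid out, the three claimed inequalities follow termwise and hence for $\Delta(f)$ as a whole.
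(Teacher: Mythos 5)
Your proof is correct and takes essentially the same route as the paper: both arguments rest on the per-letter facts that the right-hand tensor components of $\Delta(x)$ have length at most $1$ and degree at most $\deg(x)$ (with a strict drop unless the component is $x$ itself, or $1$ with $x\in X_{0}$, in which case the length drops), combined with multiplicativity of $\Delta$. Your induction on $|f|$ with the explicit termwise case analysis is just a more detailed write-up of the paper's terse ``since $\Delta$ is an algebra map'' expansion.
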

\begin{proof}
From the defining formula for $\Delta(x)$, we have  $|1|<|x''|=|x|$ for every $x\in X$.   Since $\Delta$ is an algebra map, this implies $|f''|\le |f|$. 

Next, for  $x\in X_{+}$, we have  $0=\deg(1) \le \deg(x'')< \deg(x)$, while $\deg(x)=0$ for $x\in X_{0}$. It follows that $\deg(f'')\le \deg(f)$, and equality $\deg(f'')= \deg(f)$ occurs only when $f=x_{1} \cdots x_{n}$ with $x_{i}\in X_{0}$ for all $i$.  In this case, 
$$\Delta(x_{i})= x_{i} \otimes 1 + (1-x_{i}) \otimes x_{i},$$
so in every nontrivial term one has  $|f''|<|f|$.  Hence, in all cases, we have $\deg(f'') + |f''| < \deg(f) + |f|$ for $f\in \X$.
\end{proof}

Now we determine the comultiplication of prime words.
\begin{lemma}\label{lem:coproduct-omega-1}
Suppose that $\Delta$ is a skew-triangular comultiplication on $\K\X$. If $\omega\in \X_{p}$, then there exist $\omega'\in \K\X^{+}$ and $\omega''\in \X^{+}$ such that
\begin{align*}
\Delta(\omega) = \omega \otimes 1 + [1-z_{\omega}] \otimes \omega + \sum_{}  \omega' \otimes \omega'',
\end{align*}
where each term in the sum satisfies
$$\omega'' \preceq_{\R} (\omega'')_{R} \prec_{\R} \omega,\ \text{and}\  |\omega''|\le |\omega|.$$
\end{lemma}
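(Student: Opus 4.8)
The plan is to argue by induction on the length $|\omega|$ of the prime word $\omega$, using only that $\Delta$ is an algebra map (coassociativity is never needed). When $|\omega|=1$ the word $\omega=x$ is a letter, and the claimed identity is precisely the defining formula of a skew-triangular comultiplication: here $z_{\omega}=z_{x}$, $[1-z_{\omega}]=1-z_{x}$, and every right tensor factor $x''$ occurring is a letter with $x''\prec x$, so $(x'')_{R}=x''\prec_{\R}\omega$ and $|x''|=1\le|\omega|$.

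For the inductive step, assume $|\omega|\ge 2$ and write $\omega=\alpha y$, where $y$ is the last letter of $\omega$ and $\alpha$ the prefix of length $|\omega|-1$. First I would check that $\alpha$ is again prime: since $\alpha$ and $\omega$ share their first letter, $\ol(\alpha)=\ol(\omega)$, and $\omega$ prime gives $\ol(\omega)=\ol(m_{\omega})$ by Lemma~\ref{lem:properties-alpha_R}(b); as $l(\alpha)\preceq m_{\alpha}\preceq m_{\omega}$ and $\ol$ is order-preserving, this forces $\ol(m_{\alpha})=\ol(\alpha)$, hence $\alpha_{R}=\alpha$ by Lemma~\ref{lem:properties-alpha_R}(b) again. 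Also $\alpha\prec_{\R}\omega$, being a proper prefix. Applying the inductive hypothesis to $\alpha$ and the definition of skew-triangularity to $y$ gives
\begin{align*}
\Delta(\alpha)&=\alpha\otimes 1+[1-z_{\alpha}]\otimes\alpha+\sum_{}\alpha'\otimes\alpha'',\qquad \alpha''\preceq_{\R}(\alpha'')_{R}\prec_{\R}\alpha,\quad |\alpha''|\le|\alpha|,\\
\Delta(y)&=y\otimes 1+(1-z_{y})\otimes y+\sum_{}y'\otimes y'',\qquad y''\prec y,\quad |y''|=1,
\end{align*}
with $\alpha',y'\in\K\X^{+}$. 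Then I would expand $\Delta(\omega)=\Delta(\alpha)\Delta(y)$ into its nine pure tensors. Two of them assemble the principal part, $(\alpha\otimes 1)(y\otimes 1)=\omega\otimes 1$ and $([1-z_{\alpha}]\otimes\alpha)((1-z_{y})\otimes y)=[1-z_{\omega}]\otimes\omega$ (using $z_{\omega}=z_{\alpha}z_{y}$); the remaining seven are pure tensors $\omega'\otimes\omega''$ with $\omega'\in\K\X^{+}$ (a routine check, from $\K\X^{+}\cdot\K\X\subseteq\K\X^{+}$ and $\K\X\cdot\K\X^{+}\subseteq\K\X^{+}$), in which $\omega''$ runs over the non-empty words $\alpha$, $\alpha''$, $y$, $y''$, $\alpha y''$, $\alpha'' y$, $\alpha'' y''$, each of length $\le|\omega|$.

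The heart of the argument is the estimate $(\omega'')_{R}\prec_{\R}\omega$ for these seven right factors; the inequality $\omega''\preceq_{\R}(\omega'')_{R}$ is automatic, as $\omega''$ is a suffix of itself. The main obstacle is that passing to the $R$-part need not respect the reduction order---for instance $xz\prec_{\R}x^{3}$ while $(xz)_{R}=z\succ_{\R}x^{3}$---so one cannot simply quote $\omega''\prec_{\R}\omega$. The fix uses primality of $\omega$ decisively: it gives $\ol(m_{\omega})=\ol(\omega)$, so every letter occurring in $\omega$, in $\alpha$, or in $y$ has original element $\preceq\ol(\omega)$; moreover $\ol(m_{\alpha''})=\ol((\alpha'')_{R})\preceq\ol(\alpha)$ by Lemma~\ref{lem:properties-alpha_R}(a) combined with $(\alpha'')_{R}\prec_{\R}\alpha$, and $\ol(y'')\preceq\ol(y)\preceq\ol(\omega)$. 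Hence for every candidate $\omega''$ above, Lemma~\ref{lem:properties-alpha_R}(a) yields $\ol((\omega'')_{R})=\ol(m_{\omega''})\preceq\ol(\omega)$, while $(\omega'')_{R}$, being a suffix of $\omega''$, has $|(\omega'')_{R}|\le|\omega''|\le|\omega|$. If either inequality is strict, the definition of $\prec_{\R}$ closes the case at once; otherwise $(\omega'')_{R}$ has the same original element and the same length as $\omega$, so it equals $\omega''$ itself, which is then one of $\alpha y''$, $\alpha'' y$, $\alpha'' y''$ with $|\alpha''|=|\alpha|$, and a brief lexicographic comparison against $\omega=\alpha y$---using $\ol(\alpha'')=\ol(\alpha)$ (forced by equality of the original elements), $\alpha''\prec_{\R}\alpha$ from the inductive hypothesis, and $y''\prec y$---gives $\omega''\prec_{\lex}\omega$, hence $\omega''\prec_{\R}\omega$. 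Finally, since each of the seven remainder factors $\omega''$ is non-empty and satisfies $(\omega'')_{R}\prec_{\R}\omega=\omega_{R}$, none of them feeds the $\otimes 1$-part or the $\otimes\omega$-part, so the expansion is already exactly the asserted one. I expect the nine-term bookkeeping to be entirely routine; the one delicate point is the $R$-part estimate just sketched, where primality of $\omega$ is indispensable.
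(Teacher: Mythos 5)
Your proposal is correct and follows essentially the same route as the paper: induction on $|\omega|$, peeling off the last letter so that $\omega=\alpha y$ with $\alpha$ again prime, expanding $\Delta(\alpha)\Delta(y)$ into the same nine terms, and checking the same seven remainder right factors $\alpha,\alpha'',y,y'',\alpha y'',\alpha'' y,\alpha'' y''$. The only difference is organizational: where the paper determines $(\omega'')_{R}$ case by case via Lemma~\ref{lem:properties-alpha_R}(c), you bound $\ol((\omega'')_{R})$ and $|(\omega'')_{R}|$ uniformly and dispose of the single boundary case by a lexicographic comparison, which establishes the same inequalities.
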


\begin{proof}
We proceed by induction on the length of words.  For $|\omega|=1$ the statement follows directly from the definition of  the  comultiplication on $X$. Assume now that $|\omega|\ge 2$ and write $\omega=\alpha x$ for $x\in X$. 
Since $\ol(\alpha) = \ol(m_{\alpha})$, 
Lemma \ref{lem:properties-alpha_R} (b) implies  $\alpha_{R}=\alpha$.   
Thus, by induction hypothesis (because $|\alpha|<|\omega|$),  we have
\begin{align*}
\Delta(\omega) =& \Delta(\alpha) \Delta(x) \\
=& (\alpha \otimes 1 + [1-z_{\alpha}] \otimes \alpha + \sum_{}  \alpha' \otimes \alpha'')(x \otimes 1  + (1-z_{x}) \otimes x + \sum_{}   x' \otimes x'') \\
=& \omega \otimes 1  + \alpha (1-z_{x}) \otimes x + \sum_{} \alpha  x' \otimes x''\\
& + [1-z_{\alpha}] x \otimes \alpha  + [1-z_{\omega}] \otimes \omega + \sum_{} [1-z_{\alpha}] x' \otimes \alpha x''\\
& + \sum_{}  \alpha' x \otimes \alpha''  + \sum_{}  \alpha' (1-z_{x}) \otimes \alpha'' x + \sum_{} \sum_{}  \alpha'  x' \otimes \alpha'' x'',
\end{align*}
where $x', \alpha'\in \K\X^{+}$, $x'' \in X$, $\alpha''\in \X^{+}$ and
\begin{align*}
x'' \prec x, \qquad \alpha'' \preceq_{\R} (\alpha'')_{R} \prec_{\R} \alpha, \qquad |\alpha''| \le |\alpha|.
\end{align*}

We now verify that every $\omega''$ arising in the above sums satisfies
$$\omega'' \preceq_{\R} (\omega'')_{R} \prec_{\R} \omega, \text{ and } |\omega''|\le |\omega|.$$
Since $\omega'' \preceq_{\R} (\omega'')_{R}$ and $\alpha, x\preceq_{\R}\omega_{R}=\omega$ by Lemma \ref{lem:properties-alpha_R} (d),  Lemma \ref{lem:prec_R-compatible-prec}, the definition of $\prec_{\R}$, and Lemma \ref{lem:ac<bc} apply in each of the following cases:

\begin{itemize}
\item $\omega'' = x$. $x_{R}=x \prec_{\R} \omega$.
\item $\omega'' = x''$. $(x'')_{R}=x'' \prec_{\R} x \prec_{\R} \omega$.
\item $\omega'' = \alpha$. $\alpha_{R} = \alpha \prec_{\R} \omega$.
\item $\omega'' = \alpha x''$.\\
$(\alpha x'')_{R} = \alpha_{R}x'' = \alpha x'' \prec_{\R} \alpha x = \omega$ if $\ol(m_{\alpha}) \succeq \ol(x'')$;\\
$(\alpha x'')_{R} =  x'' \prec_{\R} x \prec_{\R} \omega$ if $\ol(m_{\alpha}) \prec \ol(x'')$.
\item $\omega'' = \alpha''$. $(\alpha'')_{R} \prec_{\R} \alpha \prec_{\R} \omega$.
\item $\omega'' = \alpha''x$.\\
$(\alpha'' x)_{R} = (\alpha'')_{R}x \prec_{\R} \alpha x = \omega$ if $\ol(m_{\alpha''}) \succeq \ol(x)$; \\
$(\alpha'' x)_{R} =  x \prec_{\R} \omega$ if $\ol(m_{\alpha''}) \prec \ol(x)$.
\item $\omega'' = \alpha''x''$.\\
$(\alpha'' x'')_{R} = (\alpha'')_{R}x'' \prec_{\R} \alpha x'' \prec_{\R} \alpha x = \omega$ if $\ol(m_{\alpha''}) \succeq \ol(x'')$; \\
$(\alpha'' x'')_{R} = x'' \prec_{\R} x \prec_{\R} \omega$ if $\ol(m_{\alpha''}) \prec \ol(x'')$.
\item $|\alpha''|$, $|\alpha''x''|$, $|\alpha''x|$, $|\alpha|$, $|\alpha x''| \le |\alpha x|= |\omega|$.
\end{itemize}
Therefore, 
\begin{align*}
\Delta(\omega) = \omega \otimes 1 + [1-z_{\omega}] \otimes \omega + \sum_{} \omega' \otimes \omega'',
\end{align*}
where $\omega'\in \K\X^{+}$, $\omega''\in \X^{+}$, $\omega'' \preceq_{\R} (\omega'')_{R} \prec_{\R} \omega$ and $|\omega''|\le |\omega|$. This completes the proof. 
\end{proof}

Next we determine the comultiplication of  words in $\X \setminus  \X_{p}$.

\begin{lemma}\label{lem:coproduct-omega-2}
Suppose that $\Delta$ is a skew-triangular comultiplication on $\K\X$. Let $\omega \in \X \setminus \X_{p}$ and $\rf(\omega) = (\omega_{L}, \omega_{R})$. Then the following hold:
\begin{itemize}
\item [(a)] There exist $(\omega_{L})',(\omega_{R})' \in \K\X^{+}$ and words $(\omega_{L})'', (\omega_{R})'' \in \X^{+}$ such that
\begin{align*}
\Delta(\omega_{L}) = \omega_{L} \otimes 1 + [1-z_{\omega_{L}}] \otimes \omega_{L} + \sum_{} (\omega_{L})' \otimes (\omega_{L})'', \\
\Delta(\omega_{R}) = \omega_{R} \otimes 1 +  [1-z_{\omega_{R}}] \otimes \omega_{R} + \sum_{} (\omega_{R})' \otimes (\omega_{R})'', 
\end{align*}
where $\ol(m_{(\omega_{L})''}) \preceq \ol(m_{\omega_{L}}) \prec \ol(m_{\omega})$, $\ol(m_{(\omega_{R})''}) \preceq \ol(m_{\omega_{R}}) = \ol(m_{\omega})$, 
and $(\omega_{R})'' \preceq_{\R} ((\omega_{R})'')_{R} \prec_{\R} \omega_{R}$. 
\item [(b)] There exist $\omega'\in \K\X^{+}$ and $\omega''\in \X^{+}$ such that
\begin{align*}
\Delta(\omega) = \omega \otimes 1 + \omega_{L}[1-z_{\omega_{R}}] \otimes \omega_{R} +  [1-z_{\omega}] \otimes \omega + \sum_{} \omega' \otimes \omega'',
\end{align*}
where either $(\omega'')_{R} \prec_{\R} \omega_{R}$ or $\omega''  = f\omega_{R}$ for some $f\in \X^{+}$ satisfying $o(m_{f})\prec o(m_{\omega_{R}})$.
\end{itemize}
\end{lemma}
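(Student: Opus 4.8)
The plan is to deduce Lemma~\ref{lem:coproduct-omega-2} from Lemma~\ref{lem:coproduct-omega-1} by exploiting the reduction-factorization $\omega = \omega_L\omega_R$ and the multiplicativity of $\Delta$. Since $\omega \notin \X_p$ we have $\omega_L \neq 1$ and $\omega_R \neq \omega$, and both $\omega_L$ and $\omega_R$ are shorter than $\omega$; in fact each of them is a prime word (indeed $\omega_R \in \X_p$ by Lemma~\ref{lem:properties-alpha_R}(d), and $\omega_L$ is handled by its own r-factorization — more precisely one applies the result to $\omega_L$ whether or not it is prime, since Lemma~\ref{lem:coproduct-omega-1} applies to any prime word and an easy induction on length pushes it through for all words, or one simply invokes part (a) recursively). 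So the first step is part~(a): apply Lemma~\ref{lem:coproduct-omega-1} (or its obvious extension to all words) to get
\begin{align*}
\Delta(\omega_L) &= \omega_L \otimes 1 + [1-z_{\omega_L}] \otimes \omega_L + \sum (\omega_L)' \otimes (\omega_L)'',\\
\Delta(\omega_R) &= \omega_R \otimes 1 + [1-z_{\omega_R}] \otimes \omega_R + \sum (\omega_R)' \otimes (\omega_R)''.
\end{align*}
For the order estimates in (a): $\ol(m_{\omega_R}) = \ol(m_\omega) = m$ by Lemma~\ref{lem:properties-alpha_R}(a), while $\ol(m_{\omega_L}) \prec m$ because $\omega_L$ contains neither $m$ nor $m^*$ (again Lemma~\ref{lem:properties-alpha_R}(a)); the bounds $\ol(m_{(\omega_L)''}) \preceq \ol(m_{\omega_L})$ and $\ol(m_{(\omega_R)''}) \preceq \ol(m_{\omega_R})$ follow because the terms $(\omega_L)''$, $(\omega_R)''$ are built from the defining formula for $\Delta$ on letters, which only produces $x'' \prec x$; and $(\omega_R)'' \preceq_{\R} ((\omega_R)'')_R \prec_{\R} \omega_R$ is exactly the conclusion of Lemma~\ref{lem:coproduct-omega-1} applied to the prime word $\omega_R$.

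For part~(b), the plan is to expand $\Delta(\omega) = \Delta(\omega_L)\Delta(\omega_R)$ using the two formulas above and sort the nine resulting families of tensor terms. The three ``main'' terms come from pairing: $(\omega_L\otimes 1)(\omega_R\otimes 1) = \omega\otimes 1$; $(\omega_L\otimes 1)([1-z_{\omega_R}]\otimes\omega_R) = \omega_L[1-z_{\omega_R}]\otimes\omega_R$; and $([1-z_{\omega_L}]\otimes\omega_L)([1-z_{\omega_R}]\otimes\omega_R) = [1-z_\omega]\otimes\omega$ (using $[1-z_{\omega_L}][1-z_{\omega_R}] = [1-z_\omega]$ and $\omega_L\omega_R = \omega$). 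Every remaining term has right tensor-factor of one of the forms $\omega_L$, $(\omega_L)''$, $\omega_L\omega_R$ wait — carefully: the cross term $([1-z_{\omega_L}]\otimes\omega_L)(\omega_R\otimes 1) = [1-z_{\omega_L}]\omega_R\otimes\omega_L$, and I must check $(\omega_L)_R \prec_\R \omega_R$; since $\ol(m_{\omega_L}) \prec m = \ol(m_{\omega_R})$, we get $\omega_L \prec_\R \omega_R$, and $(\omega_L)_R = \omega_L$ as $\omega_L$ is prime, so this is fine. The terms with right factor $(\omega_L)''$: here $\ol(m_{(\omega_L)''}) \preceq \ol(m_{\omega_L}) \prec m$, so $((\omega_L)'')_R \prec_\R \omega_R$ by the definition of $\prec_\R$ (comparing $\ol$ first). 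The terms with right factor $(\omega_R)''$ or $(\omega_R)''x$-type pieces multiplied on the right by nothing: $(\omega_R)'' \prec_\R ((\omega_R)'')_R \prec_\R \omega_R$ directly. The genuinely delicate family is where the right tensor-factor is $\omega_L\cdot(\text{something})$, coming from $([1-z_{\omega_L}]\otimes\omega_L)$ or $((\omega_L)'\otimes(\omega_L)'')$ multiplied against $(\text{stuff})\otimes(\omega_R\text{ or }(\omega_R)'')$: these give right factors $\omega_L\omega_R$, $\omega_L(\omega_R)''$, $(\omega_L)''\omega_R$, $(\omega_L)''(\omega_R)''$. For $\omega_L\omega_R = \omega$ itself: this only appears as the $[1-z_\omega]\otimes\omega$ term already accounted; any other occurrence has a left factor strictly inside $[1-z_{\omega_L}]$, i.e. comes with a proper sub-term, so I need to confirm there is no ``extra'' $\omega\otimes$-term or $\omega_L[1-z_{\omega_R}]\otimes\omega_R$-duplicate — this is a bookkeeping check that the only contributions to right factor $=\omega$ and to right factor $=\omega_R$ are the listed ones (the former forces left factor $=[1-z_{\omega_L}][1-z_{\omega_R}]$ from the two ``$[1-z]$'' slots, the latter forces choosing $\omega_L\otimes 1$ then $[1-z_{\omega_R}]\otimes\omega_R$, OR $\omega_L$ from the first factor's leading term and $(\omega_R)''$ with $(\omega_R)'' = \omega_R$ — impossible since $(\omega_R)'' \prec_\R \omega_R$).

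For the remaining ``$\omega_L\cdot(\text{suffix-of-}\omega_R)$''-type right factors, write such a term as $f\omega_R$ or more generally $g$ where $g$ ends in a suffix of $\omega_R$ or of $(\omega_R)''$. The key dichotomy to establish — and I expect this to be the main obstacle — is: either $g_R \prec_\R \omega_R$, or $g = f\omega_R$ with $o(m_f) \prec o(m_{\omega_R}) = m$. When the right factor is $\omega_L\omega_R$: since $\omega_L(m) = \omega_L(m^*) = 0$ (Lemma~\ref{lem:properties-alpha_R}(a)), the r-factorization of $\omega_L\omega_R$ is exactly $(\omega_L, \omega_R)$ by Lemma~\ref{lem:properties-alpha_R}(c), so $(\omega_L\omega_R)_R = \omega_R$, NOT strictly smaller — hence this falls in the second alternative with $f = \omega_L$ and $o(m_{\omega_L}) = \ol(m_{\omega_L}) \prec m$. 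When the right factor is $\omega_L(\omega_R)''$: if $(\omega_R)''$ still contains the letter $m$ (or $m^*$), then its r-factorization has $\ol(m) = m$-part as suffix and $\omega_L$ contributes nothing to that, giving $(\omega_L(\omega_R)'')_R = ((\omega_R)'')_R \prec_\R \omega_R$ (first alternative); if $(\omega_R)''$ contains no $m$, then $\ol(m_{(\omega_R)''}) \prec m$, so $(\omega_L(\omega_R)'')_R$ has $\ol \prec m$, again $\prec_\R \omega_R$. When the right factor is $(\omega_L)''\omega_R$ or $(\omega_L)''(\omega_R)''$: since $\ol(m_{(\omega_L)''}) \prec m$, similar analysis via Lemma~\ref{lem:properties-alpha_R}(c) shows $(\,(\omega_L)''\omega_R)_R = \omega_R$ exactly when $(\omega_L)''$ omits $m, m^*$ — which it does — so this is the second alternative with $f = (\omega_L)''$, $o(m_f) \prec m$; and $(\omega_L)''(\omega_R)''$ splits the same way as $\omega_L(\omega_R)''$. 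Collecting all terms that are not among the three distinguished ones into $\sum \omega' \otimes \omega''$ and recording for each the verified alternative completes part~(b). The whole argument is a finite case analysis driven entirely by Lemma~\ref{lem:properties-alpha_R}(a),(c) and the compatibility lemmas~\ref{lem:prec_R-compatible-prec},~\ref{lem:ac<bc}; the one subtlety requiring care is keeping straight, in the mixed products $(\omega_L)'(\omega_R)'$ etc., that the \emph{left} tensor factors never accidentally vanish and that no two families collapse onto the same distinguished term, which I would handle by noting that each distinguished right-tensor-factor ($\omega$, $\omega_R$) uniquely determines which leading pieces were chosen from $\Delta(\omega_L)$ and $\Delta(\omega_R)$.
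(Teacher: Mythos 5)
Your proposal is correct and follows essentially the same route as the paper: apply Lemma \ref{lem:coproduct-omega-1} to the prime word $\omega_{R}$, use the general skew-triangular form of $\Delta$ on $\omega_{L}$ together with Lemma \ref{lem:properties-alpha_R}(a) for the order bounds in part (a), then expand $\Delta(\omega_{L})\Delta(\omega_{R})$ and verify the dichotomy for each possible right tensor factor, exactly as in the paper's case analysis. One slip to repair: $\omega_{L}$ is in general \emph{not} prime --- for $X=\{x,y,z\}$ with $x\prec y\prec z$ and $\omega=xyz$ one has $\omega_{L}=xy$, whose r-factorization is $(x,y)$ --- so your justification ``$(\omega_{L})_{R}=\omega_{L}$ as $\omega_{L}$ is prime'' is wrong; the needed inequality $(\omega_{L})_{R}\prec_{\R}\omega_{R}$ nevertheless holds for the reason you already have in hand, namely every letter of $\omega_{L}$ (hence of its suffix $(\omega_{L})_{R}$) has original element $\preceq \ol(m_{\omega_{L}})\prec \ol(m_{\omega})=o(\omega_{R})$, so $\ol((\omega_{L})_{R})\prec \ol(\omega_{R})$ and the definition of $\prec_{\R}$ gives the claim. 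With that sentence corrected, your treatment of the remaining factors ($(\omega_{R})''$, $(\omega_{L})''$, $\omega_{L}(\omega_{R})''$, $(\omega_{L})''\omega_{R}$, $(\omega_{L})''(\omega_{R})''$, and possible extra $\omega_{L}\omega_{R}$ terms), driven by Lemma \ref{lem:properties-alpha_R}(a),(c), matches the paper's proof; your split on whether $(\omega_{R})''$ contains a letter with original $\ol(m_{\omega})$ is an equivalent (slightly cleaner) version of the paper's comparison of $\ol(m_{(\omega_{R})''})$ with $\ol(m_{\omega_{L}})$.
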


\begin{proof}
By Lemma \ref{lem:coproduct-omega-1}, we have $(\omega_{R})''\preceq_{\R} ((\omega_{R})'')_{R} \prec_{\R} \omega_{R}$.
Recall from Lemma \ref{lem:properties-alpha_R} that $\ol(m_{\omega_{L}})\prec \ol(m_{\omega})$ and $\ol(m_{\omega_{R}}) = \ol(m_{\omega})$. The comultiplication on $X$ then  implies that $\ol(m_{(\omega_{L})''}) \preceq \ol(m_{\omega_{L}}) \prec \ol(m_{\omega})$ and $\ol(m_{(\omega_{R})''}) \preceq \ol(m_{\omega_{R}}) = \ol(m_{\omega})$. This completes the proof of Part(a).  
Now, for Part(b), note that $\omega_{L} \neq 1$ since $\omega$ is not prime. Applying the result from Part(a), we obtain:
\begin{align*}
 &\Delta(\omega) \\
= &\Delta(\omega_{L})\Delta(\omega_{R})\\
= & (\omega_{L} \otimes 1 + [1-z_{\omega_{L}}] \otimes \omega_{L}+ \sum (\omega_{L})' \otimes (\omega_{L})'')(\omega_{R} \otimes 1 + [1-z_{\omega_{R}}] \otimes \omega_{R}+ \sum (\omega_{R})' \otimes (\omega_{R})'')\\
= & \omega \otimes 1 + \omega_{L}[1-z_{\omega_{R}}] \otimes \omega_{R} + \sum \omega_{L} (\omega_{R})' \otimes (\omega_{R})'' \\
& + [1-z_{\omega_{L}}]\omega_{R} \otimes \omega_{L} +  [1-z_{\omega}] \otimes \omega + \sum  [1-z_{\omega_{L}}](\omega_{R})' \otimes \omega_{L} (\omega_{R})''\\
& + \sum (\omega_{L})'\omega_{R} \otimes (\omega_{L})'' + \sum (\omega_{L})' [1-z_{\omega_{R}}] \otimes (\omega_{L})''\omega_{R} + \sum \sum (\omega_{L})'(\omega_{R})' \otimes (\omega_{L})'' (\omega_{R})''.
\end{align*}
Then, by the definition of $\prec_{\R}$ and Lemmas \ref{lem:properties-alpha_R} and \ref{lem:prec_R-compatible-prec}, we conclude the following:
\begin{itemize}
\item 
$\omega'' = (\omega_{R})''$.
$((\omega_{R})'')_{R} \prec_{\R} \omega_{R}$.
\item 
$\omega'' = \omega_{L}$.
$(\omega_{L})_{R} \prec_{\R} \ol(m_{\omega}) \preceq_{\R} m_{\omega} \preceq_{\R} \omega_{R}$.
\item 
$\omega'' = \omega_{L}(\omega_{R})''$.    
\subitem  
$(\omega_{L}(\omega_{R})'')_{R} \prec_{\R}  \ol(m_{\omega}) \preceq_{\R} m_{\omega} \preceq_{\R} \omega_{R}$ if $\ol(m_{(\omega_{R})''}) \preceq \ol(m_{\omega_{L}})$;
\subitem  
$(\omega_{L}(\omega_{R})'')_{R} = ((\omega_{R})'')_{R}  \prec_{\R} \omega_{R}$ if $\ol(m_{(\omega_{R})''}) \succ \ol(m_{\omega_{L}})$.
\item 
$\omega'' = (\omega_{L})''$. $((\omega_{L})'')_{R}  \prec_{\R} \ol(m_{\omega})  \preceq_{\R}   \omega_{R}$. 
\item 
$\omega'' = (\omega_{L})''\omega_{R}$.    
\subitem 
$\ol(m_{(\omega_{L})''}) \prec_{\R} \ol(m_{\omega}) = \ol(m_{\omega_{R}})$ and $((\omega_{L})''\omega_{R})_{R}= \omega_{R}$.
\item 
$\omega'' =(\omega_{L})'' (\omega_{R})''$.
\subitem 
$((\omega_{L})'' (\omega_{R})'')_{R} \prec_{\R} \ol(m_{\omega})  \preceq_{\R} \omega_{R}$ if $\ol(m_{(\omega_{R})''}) \preceq \ol(m_{(\omega_{L})''})$; 
\subitem  
$((\omega_{L})'' (\omega_{R})'')_{R} =  ((\omega_{R})'')_{R} \prec_{\R} \omega_{R}$ if $\ol(m_{(\omega_{R})''}) \succ \ol(m_{(\omega_{L})''})$.
\end{itemize}
Therefore, Part(b) holds.
\end{proof}

\section{Words and Irreducible letters}\label{sec:5}
This section applies the comultiplication constructed in Section~\ref{sec:4} to show that every word can be written as a linear combination of products of irreducible letters.

Throughout this section, assume that $X = O \cup D^{*}$ ($D\subseteq O$) is a well-ordered set with respect to $\prec$, equipped with a map $t : X \to \mathbb{N}$ satisfying $t(x^{*}) = t(x)$ for all $x \in D$. Let $\K\X$ denote the augmented free algebra on $X$ with the canonical augmentation $\epsilon : \K\X \to \K$ (that is, $\epsilon(X) = 0$). We begin with a key reduction lemma for prime words. This result plays a central role in establishing Lemma~\ref{lem:rl-sum-smaller-irrls} and Theorem~\ref{thm:rl-sum-smaller-irrls}.

\begin{lemma}\label{lem:wr-sum-smaller-words}
Let $(\K\X,\epsilon)$ be the augmented free algebra, and let $I \subseteq \ker \epsilon$ be a proper ideal of $\K\X$.
Suppose that $\Delta$ is a skew-triangular comultiplication on $\K\X$ satisfying $\Delta(I) \subseteq I \otimes \K\X + \K\X \otimes I$, and that for every $x\in X_{0}$, there exists $c_{z}\in \K\X$ such that 
$$c_{z}(1-z), (1-z)c_{z}\in 1+ I\ \text{and}\ \ol(m_{c_{z}}) \preceq \ol(z).$$
Let $\omega\in \X^{+}$,  and suppose there exist $f,g\in \X^{+}$ and $k_{f},k_{g}\in \K$ such that
\begin{align}\label{formula:wr-fwr-g-I}
p_{\omega} = \omega_{R} + \sum_{f} k_{f}f\omega_{R} + \sum_{g} k_{g} g\in I, \quad \ol(m_{f}) \prec \ol(m_{\omega_{R}}), ~g_{R}\prec_{\R} \omega_{R}.
\end{align} 
Then there exist words $h\in \X^{+}$ and coefficients $k_{h}\in \K$ such that 
\begin{align}\label{formula:wr-h-I}
\omega_{R} \in \sum_{h} k_{h} h + I, \quad h_{R} \prec_{\R} \omega_{R}.
\end{align}
\end{lemma}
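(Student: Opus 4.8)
The plan is to apply the comultiplication $\Delta$ to the element $p_\omega \in I$ and extract, from the component living in $\K\X \otimes \K\X$, an identity that isolates $\omega_R$ modulo $I$ in terms of strictly smaller words. The governing intuition is that $\Delta(\omega_R) = \omega_R \otimes 1 + [1-z_{\omega_R}] \otimes \omega_R + (\text{lower terms})$ by Lemma \ref{lem:coproduct-omega-1}, so the ``$\otimes\,\omega_R$'' slot of $\Delta(p_\omega)$ collects a factor of the form $[1-z_{\omega_R}] + (\text{corrections})$ tensored with $\omega_R$; since $\Delta(p_\omega) \in I \otimes \K\X + \K\X \otimes I$, applying a suitable functional or the multiplication-type map to this tensor identity, together with the invertibility hypothesis $c_z(1-z) \in 1 + I$, should let us solve for $\omega_R$. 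Concretely, I would first expand $\Delta(p_\omega) = \Delta(\omega_R) + \sum_f k_f \Delta(f)\Delta(\omega_R) + \sum_g k_g \Delta(g)$ using Lemmas \ref{lem:coproduct-omega-1} and \ref{lem:coproduct-omega-2}, write $f = f' \cdots$ via r-factorizations, and carefully track which tensor terms have right-hand tensorand equal to $\omega_R$ and which have right-hand tensorand a word $h$ with $h_R \prec_\R \omega_R$.

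The second step is the extraction. Because $\Delta(p_\omega) \in I\otimes \K\X + \K\X \otimes I$, I would pick the terms whose second tensor factor, reduced modulo $I$ via Proposition \ref{prop:irrws-form-basis} if necessary, still ``sees'' $\omega_R$ as a leading contribution — i.e. separate $\Delta(p_\omega)$ as $A \otimes \omega_R + (\text{rest})$ where $A \in \K\X$ gathers all left tensorands paired with $\omega_R$, and the ``rest'' pairs only with $1$, with words strictly $\prec_\R \omega_R$, or lies in $\K\X \otimes I$. Matching against $I \otimes \K\X + \K\X \otimes I$ forces $A \in I$ (after accounting for the part of the decomposition that lands in $\K\X \otimes I$, which only contributes words $\equiv$ to smaller ones). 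Reading off $A$: the $\Delta(\omega_R)$ contributes $[1-z_{\omega_R}]$; the $\sum_f k_f \Delta(f)\Delta(\omega_R)$ contributes $\sum_f k_f (\text{leading part of }\Delta(f))\cdot[1-z_{\omega_R}]$, where the leading part of $\Delta(f)$ in the relevant slot is $f \otimes 1 + [1-z_f]\otimes f + \cdots$; since $\ol(m_f) \prec \ol(m_{\omega_R})$, these contributions are controlled — the key point being that after multiplying by $c_{z_{\omega_R}}$ (when $z_{\omega_R} \neq 1$) or directly (when the relevant $z$ is trivial, using that $\epsilon(X) = 0$ makes $1 - z_{\omega_R}$ invertible-like modulo the augmentation ideal structure), $A \in I$ rearranges to $\omega_R \in \sum_h k_h h + I$ with each $h_R \prec_\R \omega_R$.

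I expect the main obstacle to be the bookkeeping in the second step: precisely arguing that every word $h$ appearing after the rearrangement satisfies $h_R \prec_\R \omega_R$, and that no ``resonance'' term of the form $f\omega_R$ with $\ol(m_f) = \ol(m_{\omega_R})$ sneaks back in. This is where the order-control condition $\ol(m_{c_z}) \preceq \ol(z)$ is essential: multiplying $A \in I$ by $c_{z_{\omega_R}}$ to invert $1 - z_{\omega_R}$ could in principle raise the original element of the leading letter, but the hypothesis guarantees $\ol(m_{c_{z_{\omega_R}}}) \preceq \ol(z_{\omega_R}) \preceq \ol(m_{\omega_R})$, so after r-factorization (Lemma \ref{lem:properties-alpha_R}), the resulting words either have strictly smaller $\ol(m_\cdot)$ than $\ol(m_{\omega_R})$ — hence trivially $h_R \prec_\R \omega_R$ by Lemma \ref{lem:ac<bc} and the definition of $\prec_\R$ — or retain $\ol(m_{\omega_R})$ but with strictly shorter $\omega_R$-suffix, again giving $h_R \prec_\R \omega_R$. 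A secondary technical nuisance is handling the contributions from the $\sum_g k_g g$ terms with $g_R \prec_\R \omega_R$: their comultiplications via Lemma \ref{lem:coproduct-omega-2} can still produce a term $\cdot \otimes \omega_R$ only if $\omega_R$ is a suffix of $g$ with the appropriate r-factorization behavior, but since $g_R \prec_\R \omega_R$ this cannot happen, so those terms land entirely in the ``rest'' and pose no difficulty — I would verify this carefully using Lemma \ref{lem:properties-alpha_R}(d).
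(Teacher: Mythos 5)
Your starting point coincides with the paper's (expand $\Delta(p_{\omega})$ via Lemmas \ref{lem:coproduct-omega-1}--\ref{lem:coproduct-omega-2} and exploit $\Delta(p_{\omega})\in I\otimes\K\X+\K\X\otimes I$), but your extraction step is not valid as stated. Writing $\Delta(p_{\omega})=A\otimes\omega_{R}+(\text{rest})$ and ``matching'' does not force $A\in I$: the summand lying in $\K\X\otimes I$ does contribute to the coefficient of $\omega_{R}$ in the second slot, since an element of $I$ (for instance $p_{\omega}$ itself) can involve $\omega_{R}$ with nonzero coefficient when expanded in the word basis; moreover $\omega_{R}$ is typically $I$-reducible, so Proposition \ref{prop:irrws-form-basis} does not single it out as a basis element. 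The paper's extraction goes in the opposite direction: it applies $\phi\otimes\id$ with $\phi$ a linear functional vanishing on $I$ and normalized by $\phi(p_{1})=1$, where $p_{1}=(1+\sum_{f}k_{f}f)[1-z_{\omega_{R}}]$ is the aggregate left tensorand paired with $\omega_{R}$; this produces an element of $I$ in the second slot. Your first paragraph gestures at ``a suitable functional,'' but the concrete version you give points the wrong way.

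The deeper gap is that even a corrected extraction does not prove the lemma, and your proposal contains no mechanism for what remains. Applying $\phi\otimes\id$ yields a relation of the form $\omega_{R}+\sum_{f}k_{f}\phi([1-z_{f}][1-z_{\omega_{R}}])\,f\omega_{R}+\sum_{f}k_{f}\sum\phi(f'[1-z_{\omega_{R}}])\,f''\omega_{R}+(\text{terms with r-part}\prec_{\R}\omega_{R})\in I$. The resonance terms $f\omega_{R}$ and $f''\omega_{R}$ here have $\ol(m_{f}),\ol(m_{f''})\prec\ol(m_{\omega_{R}})$ — these, not terms with $\ol(m_{f})=\ol(m_{\omega_{R}})$ as you suggest, are the dangerous ones, and they are exactly the terms already allowed in hypothesis (\ref{formula:wr-fwr-g-I}) — and their r-part equals $\omega_{R}$, so conclusion (\ref{formula:wr-h-I}) is not yet reached; one cannot pick a single $\phi$ with $\phi(p_{1})=1$ that kills all $[1-z_{f}][1-z_{\omega_{R}}]$ at once, since these may be linearly dependent with $p_{1}$ modulo $I$. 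Multiplying by an inverse of $[1-z_{\omega_{R}}]$ does not help either: a word $uf\omega_{R}$ with $\ol(m_{uf})\prec\ol(m_{\omega_{R}})$ still has r-part $\omega_{R}$ (and note $c_{z}$ is only hypothesized for single letters $z\in X_{0}$, with a merely non-strict bound $\ol(m_{c_{z}})\preceq\ol(z)$). The bulk of the paper's proof is precisely the termination argument you are missing: a dichotomy on whether $g_{f_{0}}:=[1-z_{f_{0}}][1-z_{\omega_{R}}]$ lies in $\K p_{1}\oplus I$ (in which case the relation collapses to $[1-z_{f_{0}}]\omega_{R}+\sum_{g}k_{g}g\in I$ and one inverts with $c_{z_{f_{0}}}$, whose order is \emph{strictly} below $\ol(m_{\omega_{R}})$), an inner iteration choosing $\phi(g_{f_{0}})=0$ so that the set $W_{1}$ of resonance factors strictly shrinks, and an outer induction on $\max_{f}(\deg(f)+|f|)$, which Lemma \ref{lem:deg+length} shows strictly decreases for the newly created factors $f''$, with base case $f\in X_{0}$ where $f'=0$. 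Without this (or an equivalent) double induction, the argument you sketch does not close.
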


\begin{proof}
Let 
\begin{align*}
W_{1}:= \{f \mid p_{\omega} \in I \text{ and } k_{f}\neq 0\}.
\end{align*}
If $W_{1}=\emptyset$, then it is done. Let $W_{1}\neq \emptyset$. By Lemmas \ref{lem:coproduct-omega-2}, there exist $(\omega_{R})', f',g' \in \K\X^{+}$ and words $(\omega_{R})'', f'',g''\in \X^{+}$ such that
\begin{align*}
& \Delta(\omega_{R} + \sum_{f\in W_{1}} k_{f} f\omega_{R} + \sum_{g} k_{g} g) \\
= & (\omega_{R} \otimes 1 + [1-z_{\omega_{R}}] \otimes \omega_{R} +  \sum_{} (\omega_{R})' \otimes (\omega_{R})'') \\
& + \sum_{f\in W_{1}} k_{f}(f\otimes 1 + [1-z_{f}] \otimes f + \sum_{} f' \otimes f'')(\omega_{R} \otimes 1 + [1-z_{\omega_{R}}] \otimes \omega_{R} + \sum_{} (\omega_{R})' \otimes (\omega_{R})'')\\
& + \sum_{g} k_{g} (g\otimes 1  + [1-z_{g}] \otimes g + \sum_{} g' \otimes g'')
\end{align*}
\begin{align*}
= & \omega_{R} \otimes 1 + [1-z_{\omega_{R}}] \otimes \omega_{R} + \sum_{} (\omega_{R})' \otimes (\omega_{R})'' \\
& + \sum_{f\in W_{1}} k_{f} (f \omega_{R} \otimes 1  + f [1-z_{\omega_{R}}] \otimes \omega_{R} + \sum_{} f (\omega_{R})' \otimes (\omega_{R})''\\
& +  [1-z_{f}] \omega_{R} \otimes f  + [1-z_{f}] [1-z_{\omega_{R}}] \otimes f \omega_{R} + \sum_{} [1-z_{f}] (\omega_{R})' \otimes f (\omega_{R})''\\
& + \sum_{} f' \omega_{R} \otimes f''  +  \sum_{} f' [1-z_{\omega_{R}}] \otimes f'' \omega_{R} +  \sum_{}  \sum_{} f' (\omega_{R})' \otimes f'' (\omega_{R})'')\\
& + \sum_{g} k_{g} (g\otimes 1 +  [1-z_{g}] \otimes g + \sum_{} g' \otimes g''),
\end{align*}
where  $\deg(f'') + |f''| < \deg(f) + |f|$ by Lemma \ref{lem:deg+length}.

Define 
$$p_{0}:= [1-z_{\omega_{R}}],\ \  p_{1}:= (1+\sum_{f\in W_{1}} k_{f}f)[1-z_{\omega_{R}}],\ \ g_{f}:= [1-z_{f}][1-z_{\omega_{R}}] \ \text{for}\ f\in W_{1}.$$
Since $\epsilon(p_{1})=1$ and $\epsilon(I)=0$,  the subspace
 $\K p_{1} + I$ is a direct sum.
 
 Now fix an element $f_{0}\in W_{1}$ and define
\begin{align*}
W_{f_{0}}:=\{f\in W_{1} \mid g_{f}\in g_{f_{0}} + I\}.
\end{align*}
Note that $W_{f_{0}}\neq \emptyset$ since $f_{0}\in W_{f_{0}}$.

We now consider the following cases:

\noindent
\textit{Case 1:} $g_{f_{0}} \in \K p_{1} \bigoplus I$.   Since $\epsilon(g_{f_{0}}) =\epsilon(p_{1})=1$ and $\epsilon(v)=0$,  we may write $g_{f_{0}} = p_{1} + v$ for some $v\in I$.  This implies 
 $$(1 + \sum_{f\in W_{1}} k_{f}f) \in [1-z_{f_{0}}] + I.$$ 
Substituting into equation (\ref{formula:wr-fwr-g-I})  yields:
\begin{align*}
[1-z_{f_{0}}]\omega_{R} + \sum_{g} k_{g} g\in I.
\end{align*}
Recall that,  for every $x\in X_{0}$, $c_{x}(1-x), (1-x)c_{x}\in 1+ I$ satisfying $\ol(m_{c_{x}}) \preceq \ol(x)$. 
Since $\ol(m_{z_{f_{0}}}) \preceq \ol(m_{f_{0}}) \prec \ol(m_{\omega_{R}})$, it is not difficult to show that there exist words $g_{1}\in \X^{+}$ and coefficients $k_{g_{1}} \in \K$ such that
\begin{align*}
\omega_{R} + \sum_{g_{1}} k_{g_{1}} g_{1}\in I,  \qquad (g_{1})_{R} \prec_{\R} \omega_{R}.
\end{align*}

\noindent
\textit{Case 2:} $g_{f_{0}} \notin \K p_{1} \bigoplus I$. In this case, the sum $\K p_{1} + I + \K g_{f_{0}}$ is direct. Choose a subspace $U\subseteq \K\X$ such that $\K\X = \K p_{1} \bigoplus I \bigoplus \K g_{f_{0}} \bigoplus U$. Define
 a linear map $\phi:\K\X \rightarrow \K$ by
\begin{align*}
\phi(p_{1}) =1, \quad \phi(g_{f_{0}})= 0, \quad \phi(I)=\phi(U)=0. 
\end{align*}
Let $W_{1}' := W_{1} \setminus W_{f_{0}}$.  We record the following observations:
\begin{itemize}
    \item $((\omega_{R})'')_{R} \prec_{\R} \omega_{R}$ by Lemma \ref{lem:coproduct-omega-2} (a) or Lemma \ref{lem:coproduct-omega-1};
    \item $f_{R}, (f(\omega_{R})'')_{R}, (f'')_{R}, (f''(\omega_{R})'')_{R} \prec_{\R} \omega_{R}$, because $\ol(m_{f''}) \preceq \ol(m_{f}) \prec \ol(m_{\omega_{R}})$ and $((\omega_{R})'')_{R} \prec_{\R} \omega_{R}$.
    \item $(g'')_{R} \preceq_{\R} g_{R} \prec_{\R} \omega_{R}$ by Lemma \ref{lem:coproduct-omega-2} (b).
\end{itemize}
Apply the linear map $\phi \otimes \id$ to 
\begin{align*}
\Delta(\omega_{R} + \sum_{f\in W_{1}} k_{f} f\omega_{R} + \sum_{g} k_{g} g) \in I \otimes \K\X + \K\X \otimes I.
\end{align*}
Then there exist words $g_{2} \in \X^{+}$ and coefficients $k_{g_{2}} \in \K$ such that
\begin{align}\label{formula:5.1-3}
\omega_{R} + \sum_{f\in W_{1}'} k_{f} \phi(g_{f}) f\omega_{R} + \sum_{f\in W_{1}} k_{f} \sum_{}\phi( f'p_{0}) f''\omega_{R} + \sum k_{g_{2}} g_{2} \in I,
\end{align}
where 
$$\ol(m_{f''}) \prec \ol(m_{\omega_{R}}),\ \deg(f'') + |f''| < \deg(f) + |f|\ \text{and}\ (g_{2})_{R} \prec_{\R} \omega_{R}.$$

Note that $|W_{1}'| < |W_{1}|$. By repeating the above process, we may eliminate the second term in (\ref{formula:5.1-3}). Consequently,  there exit words $q_{f}, g_{3}\in \X^{+}$ and coefficients $k_{q_{f}},k_{g_{3}}\in \K$ such that
\begin{align}\label{formula:wr-qf-I}
\omega_{R} + \sum_{f\in W_{1}} \sum_{q_{f}} k_{q_{f}} q_{f} \omega_{R} + \sum k_{g_{3}} g_{3} \in I,
\end{align}
where $(g_{3})_{R} \prec_{\R} \omega_{R}$, $\ol(m_{q_{f}}) \prec \ol(m_{\omega_{R}})$,  and  $\deg(q_{f})+ |q_{f}| < \deg(f) + |f|$ by Lemma \ref{lem:deg+length}. 

Define
\begin{align*}
W_{2} &:= \{q_{f} \mid q_{f} \text{ appears in } (4) \}, \\
d_{1} &:= \max\{\deg(f) + |f| \mid f \in W_{1} \},\\
d_{2} &:= \max\{\deg(w) + |w| \mid  w \in W_{2} \}
\end{align*}
We now prove (\ref{formula:wr-h-I}) by induction on $d_{1}$. 

If $d_{1}=1$, then $\deg(f)=0$ and $|f|=1$ for all $f\in W_{1}$.  Since $\Delta(f)= f\otimes 1  + (1-f) \otimes f$, we have $f'=0$ for all $f\in W_{1}$. 
Hence,  $k_{q_{f}}=0$,  and (\ref{formula:wr-qf-I}) takes the desired form:
\begin{align*}
\omega_{R} + \sum k_{g_{3}} g_{3} \in I, \qquad (g_{3})_{R} \prec_{\R} \omega_{R}. 
\end{align*}
Let $d_{1} \ge 1$. By construction,
$$\deg(q_{f})+ |q_{f}| < \deg(f) + |f|, \ \text{for all}\ q_{f} \in W_{2},$$
and hence, $d_2<d_1$.   Thus the induction hypothesis applies to $W_2$, and yields
the desired form (\ref{formula:wr-h-I}) for $\omega_{R}$. 
\end{proof}

Next we show that every reducible letter can be represented by a linear combination of products of letters with strictly lower reduction order.

\begin{lemma}\label{lem:rl-sum-smaller-irrls}
Let $(\K\X,\epsilon)$ be the augmented free algebra, and let $I \subseteq \ker \epsilon$ be a proper ideal of $\K\X$. Suppose $\Delta$ is a skew-triangular comultiplication on $\K\X$ such that $\Delta(I) \subseteq I \otimes \K\X + \K\X \otimes I$.
Assume that for every $z\in X_{0}$, there exists $c_{z}\in \K\X$ such that 
$$c_{z}(1-z), (1-z)c_{z}\in 1+ I\ \text{and}\ \ol(m_{c_{z}}) \preceq \ol(z).$$
If $x\in X$ is an $I$-reducible letter,
then there exist letters $x_{1},\ldots,x_{n}$ in $X$ and coefficients $k_{x_{1}\cdots x_{n}}\in \K$ such that
\begin{align*}
x \in \sum_{} k_{x_{1}\cdots x_{n}} x_{1} \cdots x_{n} + I, \qquad x_{1}, \ldots, x_{n} \prec x.
\end{align*}
That is, $x$ can be expressed modulo $I$ as a linear combination of monomials in strictly smaller letters.
\end{lemma}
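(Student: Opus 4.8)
The plan is to run an induction on the reduction order $\prec_{\R}$ applied to the letter $x$, using Lemma~\ref{lem:wr-sum-smaller-words} as the engine that converts a relation with leading word $x$ into the desired expression. Since $x$ is $I$-reducible, there is a polynomial $f\in I$ with $\LW(f)=x$; after rescaling and using the augmentation (so that every word in $f$ lies in $\ker\epsilon$, i.e.\ is non-empty), I can write $x \in \sum k_\eta\,\eta + I$ with each $\eta \prec_{\R} x$, that is, $\eta\in\X^{+}$ and $\ol(m_\eta)\preceq\ol(x)$ with the lexicographic/length refinements making $\eta\prec_{\R}x$. Note that since $x$ is a letter, $x_R = x$ and $o(x) = \ol(x) = m_x$, so $x$ itself plays the role of $\omega_R$ in the notation of Lemma~\ref{lem:wr-sum-smaller-words} (take $\omega$ to be any word with $\omega_R = x$, e.g.\ $\omega = x$). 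The first task is to massage the relation $x \in \sum k_\eta\,\eta + I$ into the precise shape~\eqref{formula:wr-fwr-g-I} required by Lemma~\ref{lem:wr-sum-smaller-words}: each $\eta$ with $\ol(m_\eta) \prec \ol(m_x)$ is of type $g$ (indeed $g_R \prec_{\R} x$ since $\ol(m_\eta)\prec\ol(x)$), and each $\eta$ with $\ol(m_\eta) = \ol(x)$ either already has $\eta_R \prec_{\R} x$ (type $g$ again) or, writing $\rf(\eta)=(\eta_L,\eta_R)$, must satisfy $\eta_R = x$ — because $\ol(m_{\eta_R}) = \ol(m_\eta) = \ol(x)$ forces $m_{\eta_R}$ to be $x$ or $x^{*}$, and the latter would give $\eta_R \succeq_{\R} x^{*} \succ_{\R} x$, contradicting $\eta \prec_{\R} x \succeq_{\R} \eta_R$; hence $\eta_R = x$ and $\eta = \eta_L x = f x$ with $\ol(m_f) \prec \ol(x) = \ol(m_x)$, which is type $f$.

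With the relation now in the form~\eqref{formula:wr-fwr-g-I}, Lemma~\ref{lem:wr-sum-smaller-words} applies and produces words $h\in\X^{+}$ and scalars $k_h$ with
\begin{align*}
x \in \sum_{h} k_h\, h + I, \qquad h_R \prec_{\R} x.
\end{align*}
Now I want to conclude by induction. The point is that each word $h$ appearing here has $h_R \prec_{\R} x$; writing $h = h_n h_{n-1}\cdots h_1$ as its unique ascending factorization into prime words (Proposition~\ref{prop:unique-factorization}), every prime factor $h_i$ satisfies $h_i \preceq_{\R} h_1 = h_R \prec_{\R} x$. If each $h_i$ were a \emph{letter}, I would be done directly — but prime words need not be letters. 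So the induction should really be set up on words, not just letters: I would prove the stronger statement that every word $\omega\in\X^{+}$ with $\omega \prec_{\R} x$ (for the fixed reducible letter $x$, or more cleanly: every word strictly $\prec_{\R}$-below a given bound) can be written modulo $I$ as a linear combination of products of letters each $\prec_{\R} x$ — together with the letter case being handled by the argument above. Concretely: each $h$ above is a product of prime words all $\prec_{\R} x$; each such prime word $w$ either is a letter $\prec x$ (done), or by Proposition~\ref{prop:prime-words}/Lemma~\ref{lem:properties-alpha_R} has length $\geq 2$ and I can write $w = \alpha y$ with $y$ a letter and $\alpha$ a shorter word, both $\prec_{\R} w \prec_{\R} x$; the letters occurring when I expand recursively are all $\prec_{\R} x$. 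Any letter encountered in this expansion that happens to be $I$-reducible is strictly smaller than $x$, so the outer induction on $\prec_{\R}$ (well-founded by Lemma~\ref{lem:reduction-order-well}) lets me replace it by a combination of still-smaller letters. Collecting terms gives $x \in \sum k_{x_1\cdots x_n} x_1\cdots x_n + I$ with all $x_i \prec x$, using Lemma~\ref{lem:prec_R-compatible-prec} to pass freely between $\prec$ and $\prec_{\R}$ on letters.

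\textbf{Main obstacle.} The delicate point is the bookkeeping in the reduction from an arbitrary $I$-relation with leading word $x$ to the canonical form~\eqref{formula:wr-fwr-g-I}, and in particular verifying that the ``type $f$'' terms genuinely have the factorization $\eta = f x$ with $\ol(m_f)\prec\ol(m_x)$ rather than some other configuration — this is where the structure of the mirror set $D^{*}$ and the precise definition of $\prec_{\R}$ (especially the clause $\ol(x)=\ol(y)=x$ and $y=x^{*}$) must be used carefully, exactly as in the argument sketched above distinguishing $m_{\eta_R}\in\{x,x^{*}\}$. A secondary subtlety is making the induction hypothesis strong enough to cover prime words of length $\geq 2$, not just letters — so the statement one actually inducts on needs to be phrased for words, with the product-of-smaller-letters conclusion, and one must check that decomposing a prime word $w=\alpha y$ and recursing terminates (it does, since $|\alpha|<|w|$ and $\alpha,y\prec_{\R}w$, so a secondary induction on length inside the $\prec_{\R}$-induction closes it). The application of Lemma~\ref{lem:wr-sum-smaller-words} itself is a black box here, so the real work is entirely in these order-theoretic verifications, none of which is deep but all of which require attention to the $O$ versus $D^{*}$ dichotomy.
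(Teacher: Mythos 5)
There is a genuine gap, and it sits exactly at the step you flagged as the ``main obstacle'': the reduction of the initial relation to the shape (\ref{formula:wr-fwr-g-I}) with $\omega_{R}=x$ does not work. From $\LW(f)=x$ you get $x\in\sum k_{\eta}\eta+I$ with $\eta\prec_{\R}x$, but for a letter $x$ this only controls $o(\eta)$, the original of the \emph{first} letter of $\eta$ (Corollary \ref{cor:prec_R-compatible-prec-2}); it does not give $\ol(m_{\eta})\preceq\ol(x)$, as you assert. A word $\eta\prec_{\R}x$ may contain letters far above $x$: e.g.\ for $w\prec x\prec z$ and $x-wz\in I$, the word $\eta=wz$ satisfies $\eta\prec_{\R}x$ but $m_{\eta}=z\succ x$, $\eta_{R}=z\succ_{\R}x$, and $\eta$ is neither of type $g$ (its greatest suffix is not $\prec_{\R}x$) nor of the form $fx$. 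Your case analysis omits $\ol(m_{\eta})\succ\ol(x)$ entirely, and the ``contradiction'' you invoke in the remaining case relies on the inequality $\eta\succeq_{\R}\eta_{R}$, which is backwards: by Lemma \ref{lem:properties-alpha_R}(d) the suffix $\eta_{R}$ is the \emph{greatest} factor of $\eta$, so $\eta_{R}\succeq_{\R}\eta$, and indeed $\eta=yx^{*}$ with $y\prec x\in D$ gives $\eta\prec_{\R}x$ while $\eta_{R}=x^{*}\succ_{\R}x$. So Lemma \ref{lem:wr-sum-smaller-words} cannot be fed the initial relation with $\omega_{R}=x$, and once that fails your concluding induction (which is otherwise fine, if somewhat overbuilt --- once $h_{R}\prec_{\R}x$ every letter of $h$ is already $\prec x$ by Lemma \ref{lem:properties-alpha_R}(d), no prime factorization is needed) has nothing to start from.

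What is missing is the paper's central device. The actual proof does not reduce to Lemma \ref{lem:wr-sum-smaller-words} at the letter $x$; it takes $\beta$ to be the greatest suffix $\omega_{R}$ over all words $\omega$ in the relation (possibly $\beta\succeq_{\R}x$), and inducts on $\beta$ with respect to $\prec_{\R}$. In the nontrivial case $\beta\succeq_{\R}x$ it applies the skew-triangular comultiplication $\Delta$ to the relation, uses $\Delta(I)\subseteq I\otimes\K\X+\K\X\otimes I$, and hits the result with $\varphi\otimes\id$ for a linear functional $\varphi$ built from a direct-sum decomposition $\K\X=\K p\oplus I\oplus\K1\oplus V$ with $p=\sum k_{\alpha\beta}\alpha[1-z_{\beta}]$; this is what manufactures a relation of the shape (\ref{formula:wr-fwr-g-I}) \emph{for $\beta$}, to which Lemma \ref{lem:wr-sum-smaller-words} is then applied, lowering the maximal suffix and closing the induction. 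Your proposal never uses $\Delta$ or the hypotheses on it outside the black-box citation, which is a sign the reduction step was too optimistic. A secondary omission: the case $x\in D^{*}$ needs separate treatment, since $x^{*}$ is only the second least element of $L_{x}$ (Lemma \ref{lem:x-least}(c)), so its expansion may carry an extra $k_{x}x$ term.
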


\begin{proof}
If $x \in I$, the result is immediate. Assume therefore that $x \notin I$. We proceed by case analysis based on the type of $x$.

\noindent
\textit{Case 1}: $x\in O$.
By the augmentation $\epsilon$ and Corollary \ref{cor:prec_R-compatible-prec-2}, there exist $\omega \in \X^{+}$ and $k_{\omega} \in \K^{*}$ such that 
\begin{align*}
x\in \sum_{} k_{\omega} \omega + I, \quad \ol(\omega)\prec x.
\end{align*}
Define the sets:
\begin{align*}
W & :=\{\omega \mid x\in \sum_{} k_{\omega} \omega + I\},\\ 
W_{R}& :=\{\omega_{R} \mid \omega\in W\},
\end{align*}
and  let $\beta$ be  the greatest word  in $W_{R}$ (w.r.t. $\prec_{\R}$).  We can then  express $x$ as:
\begin{align*}
x\in \sum_{\alpha\beta\in W} k_{\alpha\beta} \alpha\beta + \sum_{\omega\in W \atop {\omega_{R} \prec_{\R} \beta}} k_{\omega} \omega + I, \qquad \ol(\alpha\beta), \ol(\omega) \prec x.
\end{align*}
Without loss of generality, we assume $\sum_{\alpha\beta\in W} k_{\alpha\beta} \alpha\beta\notin I$.

Since $(\X, \prec_{\R})$ is well-ordered by Lemma \ref{lem:reduction-order-well},  we proceed by induction  on $\beta$ w.r.t. $\prec_{\R}$. 
Note that $\beta \in \X^{+}$.

Let $a$ be the least non-empty word in $\X$ (i.e., the least letter of $X$ w.r.t. the order $\prec$ by Lemma \ref{lem:x-least} (d)).
If $\beta = a$, then  for each  $\omega\in W$, we have $\omega = a^{n(\omega)}$ for some $n(\omega)\ge 1$, where $a = o(\omega) \prec x$. Thus the claim holds in this case.
Assume $\beta \succ_{\R} a$. 
If $\beta \prec_{\R} x$, then by Lemma \ref{lem:properties-alpha_R}(d), for every $\omega \in W$ and every letter $b$ in $\omega$, we have $b \preceq_{\R} \omega_{R} \preceq_{\R} \beta \prec_{\R} x$. Lemma \ref{lem:prec_R-compatible-prec} then implies $b \prec x$, and again the claim holds. 

Now suppose $\beta \succeq_{\R} x$ (i.e., $l(\beta), \ol(\beta) \succeq x$ by Corollary \ref{cor:prec_R-compatible-prec-2}). Then $\alpha \neq 1$ since $\ol(\alpha\beta) \prec x$. 

By the comultiplication structure and Lemma \ref{lem:coproduct-omega-2}, there exist $z_{x} \in X_{0}$, $x', (\alpha\beta)', \omega' \in \K\X^{+}$, $x'' \in X$ and $(\alpha\beta)'', \omega'' \in \X^{+}$ such that:
\begin{align*}
& \Delta( \sum_{\alpha\beta\in W} k_{\alpha\beta} \alpha\beta + \sum_{\omega\in W \atop {\omega_{R} \prec_{\R} \beta}} k_{\omega} \omega - x) \\
=&\sum_{\alpha\beta\in W} k_{\alpha\beta}(\alpha\beta \otimes 1 + \alpha [1-z_{\beta}] \otimes \beta 
+ [1-z_{\alpha\beta}] \otimes \alpha\beta + \sum_{} (\alpha\beta)' \otimes (\alpha\beta)'') \\
& + \sum_{\omega \in W \atop{\omega_{R}\prec_{\R} \beta}} k_{\omega}(\omega \otimes 1  + [1-z_{\omega}] \otimes \omega + \sum_{} \omega' \otimes \omega'')\\
& - (x \otimes 1 +  (1-z_{x}) \otimes x + \sum_{} x' \otimes x''),
\end{align*}
where:
\begin{itemize}
\item $x'' \prec x$, i.e., $x '' \prec_{\R} x$,
\item $(\omega'')_{R} \preceq_{\R} \omega_{R} \prec_{\R} \beta$,
\item either $((\alpha\beta)'')_{R} \prec_{\R} \beta$ or $(\alpha\beta)'' = u\beta$ for some $u \in \X^{+}$ with $o(m_{u}) \prec o(m_{\beta})$.
\end{itemize}
Let $p := \sum_{\alpha\beta \in W} k_{\alpha\beta} \alpha[1 - z_{\beta}]$. Note that $p \notin I$ since $\sum_{\alpha\beta \in W} k_{\alpha\beta} \alpha\beta \notin I$. Therefore, $\K p + I$ is a direct sum. Since $\alpha \in \X^{+}$ for every $\alpha\beta \in W$, we also have that $\K p + I + \K 1$ is a direct sum by the augmentation $\epsilon$.

Let $\K\X = \K p \oplus I \oplus \K 1 \oplus V$ for some subspace $V \subseteq \K\X$. Define a linear map $\varphi : \K\X \rightarrow \K$ by:
\begin{align*}
\varphi(p) =1, \quad \varphi(1) = 0, \quad \varphi(I) = \varphi(V)=0. 
\end{align*}

Since
\begin{align*}
\Delta( \sum_{\alpha\beta\in W} k_{\alpha\beta} \alpha\beta + \sum_{\omega\in W \atop {\omega_{R} \prec_{\R} \beta}} k_{\omega} \omega - x) 
\in  I \otimes \K\X + \K\X \otimes I,
\end{align*}
applying $\varphi \otimes \id$ yields that there exist $f, g \in \X^{+}$ and $k_{f}, k_{g} \in \K$ such that:
\begin{align*}
\beta +   \sum_{f} k_{f} f\beta + \sum_{g} k_{g} g\in I, \qquad  \ol(m_{f}) \prec \ol(m_{\beta}), \quad g_{R}\prec_{\R} \beta.
\end{align*}
By Lemma \ref{lem:wr-sum-smaller-words}, there exist words $h\in \X^{+}$ and coefficients $k_{h}\in \K$ such that
\begin{align*}
\beta \in \sum_{h} k_{h} h + I, \qquad  h_{R} \prec_{\R} \beta.
\end{align*} 
We can now rewrite $x$ as: 
\begin{align*}
x\in \sum_{\alpha\beta\in W} \sum_{h} k_{\alpha\beta}k_{h} \alpha h + \sum_{\omega\in W \atop {\omega_{R} \prec_{\R} \beta}} k_{\omega} \omega + I, \qquad h_{R} \prec_{\R} \beta.
\end{align*}
Observe that $(\alpha h)_{R} \prec_{\R} \beta$, since 
\begin{itemize}
\item if $\ol(m_{\alpha}) \prec \ol(m_{h})$, then $(\alpha h)_{R} = h_{R} \prec_{\R} \beta$;
\item if $\ol(m_{\alpha}) \succeq \ol(m_{h})$, then $(\alpha h)_{R} \prec_{\R} \ol(m_{\beta}) \preceq_{\R} m_{\beta} \preceq_{\R} \beta$.
\end{itemize}
By the induction hypothesis (since $(\alpha h)_{R}, \omega_{R} \prec_{\R} \beta$), there exist letters $x_{1}, \ldots, x_{n} \in X$ and coefficients $k_{x_{1}\cdots x_{n}} \in \K$ such that:
\begin{align*}
x \in \sum_{} k_{x_{1}\cdots x_{n}} x_{1} \cdots x_{n} + I, \qquad x_{1}, \ldots x_{n} \prec x.
\end{align*}

\noindent
\textit{Case 2}: $x^{*}\in D^{*}$ for an original element $x \in D$.  

Recall that $x^{*}$ is the second least element among words $w \in \X$ satisfying $\ol(w) = x$ by Lemma \ref{lem:x-least} (c). 
It follows from Corollary \ref{cor:prec_R-compatible-prec-2} that there exist $\omega \in \X^{+}$ and $k_{x}, k_{\omega} \in \K$, such that
\begin{align*}
x^{*} - k_{x} x \in \sum_{} k_{\omega} \omega + I, \qquad o(\omega)\prec x.
\end{align*}
Recall that $t(x^{*})=t(x)$. The remainder of the proof follows the same patten of Case 1,
using $\Delta(x^{*})$ instead of $\Delta(x)$. We conclude that there exist letters $y_{1}, \ldots, y_{m} \in X$ and coefficients $k_{y_{1}\cdots y_{m}} \in \K$ such that:
\begin{align*}
x^{*} \in k_{x} x +  \sum_{} k_{y_{1}\cdots y_{m}} y_{1} \cdots y_{m} + I, \qquad y_{1},\ldots,y_{m} \prec x \prec x^{*}.
\end{align*}
\end{proof}

We now prove that every letter or non-empty word can be expressed as a linear combination of products of $I$-irreducible letters w.r.t. the reduction order. In particular, this shows that the images $\pi(X_I)$ generate the quotient algebra $\K\X/I$.

\begin{theorem}\label{thm:rl-sum-smaller-irrls}
Let $(\K\X,\epsilon)$ be the augmented free algebra, and let $I \subseteq \ker \epsilon$ be a proper ideal of $\K\X$.
Suppose $\Delta$ is a skew-triangular comultiplication on $\K\X$ such that $\Delta(I) \subseteq I \otimes \K\X + \K\X \otimes I$.
Assume that for every $z \in X_{0}$, there exists $c_{z} \in \K\X$ satisfying:
\begin{itemize}
\item $c_{z}(1-z), (1-z)c_{z} \in 1 + I\ (c_z$ is the inverse of $1-z$ modulo $I)$,
\item $\ol(m_{c_{z}}) \preceq \ol(z)$ $($order control condition$)$.
\end{itemize}
If $x \in X$ is an $I$-reducible letter, then there exist $I$-irreducible letters $x_{1}, \ldots, x_{n} \in X$ and scalars $k_{x_{1}\cdots x_{n}} \in \K$ such that
\begin{align*}
x \in \sum_{} k_{x_{1}\cdots x_{n}}x_{1} \cdots x_{n} + I, \qquad x_{1},\ldots,x_{n} \prec x.
\end{align*}
Consequently, every non-empty word in $\X$ can be expressed modulo $I$ as a linear combination of products of $I$-irreducible letters.
\end{theorem}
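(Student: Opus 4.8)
The plan is to obtain this theorem from Lemma~\ref{lem:rl-sum-smaller-irrls} by a well-ordered induction, since the substantive work --- the comultiplication formulas of Section~\ref{sec:4} together with the invertibility/order-control bookkeeping inside Lemmas~\ref{lem:wr-sum-smaller-words} and~\ref{lem:rl-sum-smaller-irrls} --- is already in place. Lemma~\ref{lem:rl-sum-smaller-irrls} already expresses an $I$-reducible letter $x$ modulo $I$ as a linear combination of products of letters each strictly below $x$ in the order $\prec$; what remains is to upgrade ``letters below $x$'' to ``$I$-irreducible letters below $x$''. Because $(X,\prec)$ is a well-order, transfinite induction on $x$ is available.

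First I would fix an $I$-reducible letter $x$ and assume, as induction hypothesis, that every letter $y \prec x$ is congruent modulo $I$ to a linear combination of products of $I$-irreducible letters, each $\preceq y$ (for $I$-irreducible $y$ this is the trivial statement that $y$ is the one-factor product ``$y$''; for $I$-reducible $y$ it is the conclusion of the theorem applied to $y$). Applying Lemma~\ref{lem:rl-sum-smaller-irrls} to $x$ gives
\begin{align*}
x \in \sum_{j} k_{j}\, x_{j,1}\cdots x_{j,n_{j}} + I, \qquad x_{j,1},\ldots,x_{j,n_{j}} \prec x
\end{align*}
(if $x$ is the $\prec$-least letter this sum is empty, so $x \in I$ and the conclusion holds vacuously; thus no separate base case is needed). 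By the induction hypothesis each factor $x_{j,i}$ is congruent modulo $I$ to a linear combination of products of $I$-irreducible letters, all of which are $\preceq x_{j,i} \prec x$. Since congruence modulo the two-sided ideal $I$ is compatible with multiplication, substituting these expressions into each monomial $x_{j,1}\cdots x_{j,n_{j}}$ and expanding produces a linear combination, modulo $I$, of products of $I$-irreducible letters, every one of which is $\prec x$. Summing over $j$ gives the first assertion for $x$ and closes the induction.

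For the final claim, observe that \emph{every} letter of $X$ is, modulo $I$, a linear combination of products of $I$-irreducible letters: trivially if the letter is $I$-irreducible, and by the first assertion otherwise. Since $a \equiv a'$ and $b \equiv b'$ modulo $I$ force $ab \equiv a'b'$ modulo $I$, a short induction on word length extends this to any non-empty word $\omega = z_{1}\cdots z_{m}$ --- multiply the expressions obtained for $z_{1},\ldots,z_{m}$ and expand. I do not expect a genuine obstacle in this argument; the one point requiring care is the bookkeeping of the two nested layers of induction (the outer one on $x$ under $\prec$, and the inner substitution of the induction hypothesis into each factor), specifically verifying that the substitution keeps the error term inside $I$ while preserving the strict bound ``$\prec x$'' for every $I$-irreducible letter surviving in the final expression.
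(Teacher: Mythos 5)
Your proposal is correct and follows essentially the same route as the paper: a well-ordered (transfinite) induction on the letter $x$ with respect to $\prec$, invoking Lemma~\ref{lem:rl-sum-smaller-irrls} to drop to strictly smaller letters, substituting the induction hypothesis into each factor, and then extending to arbitrary non-empty words by multiplicativity of congruence modulo $I$. The only difference is that you spell out the base case and the substitution bookkeeping more explicitly than the paper does, which is harmless.
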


\begin{proof}
We proceed by induction on reducible letters in $X$ with respect to the well-ordering $\prec$ (since $(X, \prec)$ is well-ordered).

By Lemma \ref{lem:rl-sum-smaller-irrls}, there exist letters $x_{1}, \ldots, x_{n} \in X$ and scalars $k_{x_{1}\cdots x_{n}} \in \K$ such that
\begin{align*}
x \in \sum_{} k_{x_{1}\cdots x_{n}} x_{1} \cdots x_{n} + I,  \qquad  x_{1},\ldots,x_{n} \prec x.
\end{align*}
Now, by the induction hypothesis (since $x_{1}, \ldots, x_{n} \prec x$), each $x_i$ that is $I$-reducible can be expressed modulo $I$ as a linear combination of products of $I$-irreducible letters. Substituting these expressions yields the existence of $I$-irreducible letters $y_{1}, \ldots, y_{m} \in X_{I}$ and scalars $k_{y_{1}\cdots y_{m}} \in \K$ such that
\begin{align*}
x \in \sum_{} k_{y_{1}\cdots y_{m}} y_{1} \cdots y_{m} + I 
\qquad y_{1},\ldots,y_{m} \prec x.
\end{align*}
Therefore, every reducible letter in $X$ can be represented modulo $I$ as a linear combination of products of $I$-irreducible letters. Since irreducible letters trivially satisfy this condition, it follows that all letters in $X$ have this property. Consequently, every non-empty word in $\X$ can be expressed modulo $I$ as a linear combination of products of $I$-irreducible letters.
\end{proof}

\section{Application to  Noetherian pointed Hopf algebras}\label{sec:6}

In this section, we apply the results obtained in Section 5 to pointed Hopf algebras.  We show that every right or left Noetherian pointed Hopf algebra is affine.  

Let $H$ be a pointed Hopf algebra over $\K$, $\G(H)$ the group of its group-like elements.  For any $g\in \G(H)$,  recall that 
\begin{align*}
\Delta_{H}(1-g) = (1-g) \otimes 1 + g \otimes (1-g), \quad \epsilon_{H}(1-g)=0,
\end{align*}
that is, $1-g$ is a $(g,1)$-skew primitive element. In what follows, we will take $1-g$ as a generator of $H$ for every $g\in G(H)$, rather than $g$ itself. The choice is motivated by the fact that  because nontrivial group-like elements pose significant challenges in defining the reduction order and irreducible words when used as generators.

Recall that the coradical filtration $\{H_{(n)}\}_{n\ge 0}$ of $H$ is a Hopf algebra filtration of $H$. By Lemma \ref{lem:corad-pointed-Hopfalg}, we may choose a generating set $X$ of $H$, along with the assignment $f_{X}: X \rightarrow H$ and a map $t: X \rightarrow \mathbb{N}$ defined by 
$$t(x) = \min\{n \mid f_{X}(x) \in H_{(n)}\}$$
 (see Example \ref{example:filtration-t-degree}), such that for $x\in X_{0}$, we have $\pi(1-x)\in \G(H)$. Here,  $\pi: \K\X \rightarrow H$ is the canonical projection induced by $f_{X}$, and $\K\X$ is the free algebra on $X$.

Let $I=\ker \pi$.  Recall that  for  all $x\in X_{0}$,  $\pi(1-x) \in \G(H)$. 
Suppose  $\ord(\pi(1-x)) = n < \infty$. Then 
\begin{align}\label{formula:invertible-relation-finite-1}
(1-x)^{n} \in 1 + I,
\end{align}
or equivalently,
\begin{align}\label{formula:invertible-relation-finite-2}
x^{n} \in a_{n-1}x^{n-1} + \ldots + a_{1}x + I, \quad \text{ for some } a_{n-1},\ldots,a_{1} \in \Z.
\end{align}
If instead $\ord(\pi(1-x))) = \infty$, then there exists a letter $x^{*} \in X_{0}$ such that
\begin{align}\label{formula:invertible-relation-infinite-1}
& (1-x)(1-x^{*})\in 1+ I, \qquad (1-x^{*})(1-x)\in 1 + I, 
\end{align}
or equivalently, 
\begin{align}\label{formula:invertible-relation-infinite-2}
xx^{*} \in x + x^{*} + I, \qquad x^{*}x \in x + x^{*} +I.
\end{align}

For such elements $x,x^{*}\in X_{0}$, we have the following result.
\begin{lemma}\label{lem:relation-x-x*}
Let $(\K\X,\epsilon)$ be the augmented free algebra. Let $I \subseteq \ker \epsilon$ be a proper ideal of $\K\X$, $x \in D$ and $x^{*} \in D^{*}$. Suppose that 
$$xx^{*} \in x + x^{*} + I\ \text{and}\ x^{*}x \in x + x^{*} +I.$$
 Then the following hold:
\begin{itemize}
    \item [(a)] If $x$  is $I$-reducible, then $x^{*}$ is $I$-reducible.
    \item [(b)] If $x^{*}$  is $I$-reducible, then there exist $\omega \in \X^{+}$ and $k_{x}, k_{\omega} \in \K$ such that
\begin{align}\label{formula:x*}
x^{*} \in k_{x} x + \sum_{} k_{\omega} \omega + I, \qquad  o(\omega)\prec x.
\end{align}
Moreover,
\begin{itemize}
\item  if $k_{x} = 0$, then $x$ is $I$-reducible;
\item  if $k_{x} \neq 0$, then $x^{2}$ is $I$-reducible.
\end{itemize}
\end{itemize}
\end{lemma}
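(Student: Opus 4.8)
**Proof proposal for Lemma \ref{lem:relation-x-x*}.**

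The plan is to exploit the two defining relations $xx^{*}\in x+x^{*}+I$ and $x^{*}x\in x+x^{*}+I$ together with the fact, recorded in Example \ref{eg:x-x*} and Lemma \ref{lem:x-least}, that $x\prec_{\R} x^{*}\prec_{\R} xx^{*}\prec_{\R} x^{*}x$, so that $x$ is the least element of $L_{x}$ and $x^{*}$ is its second least element. For part (a), suppose $x$ is $I$-reducible. By Lemma \ref{lem:irrl} (or directly by the augmentation $\epsilon$ and Corollary \ref{cor:prec_R-compatible-prec-2}), $I$-reducibility of $x$ means there is a polynomial in $I$ with leading word $x$; since $x$ is the least non-empty word with $o=x$, this forces $x\in\sum k_{\omega}\omega+I$ with every $\omega\prec_{\R} x$, i.e. $o(\omega)\prec x$ for all $\omega$ — equivalently $x$ lies in $X^{\prec x}\cdot\K\X+I$. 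I then substitute this expression for $x$ into the relation $xx^{*}\in x+x^{*}+I$: the left side becomes $\bigl(\sum k_{\omega}\omega\bigr)x^{*}\in\sum k_{\omega}\omega x^{*}+I$, and by Lemma \ref{lem:ac<bc} each $\omega x^{*}\prec_{\R} xx^{*}$; more importantly $o(\omega x^{*})=o(\omega)\prec x$, so each such word is $\prec_{\R} x\prec_{\R} x^{*}$. Rearranging, $x^{*}\in x - \bigl(\sum k_{\omega}\omega x^{*}\bigr) + I$, and then replacing the remaining single $x$ by its small expansion again, we get $x^{*}\in\sum k'_{\mu}\mu+I$ with $\mu\prec_{\R} x^{*}$ for all $\mu$, whence $\LW$ of the corresponding element of $I$ equals $x^{*}$ and $x^{*}$ is $I$-reducible.

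For part (b), suppose $x^{*}$ is $I$-reducible. Since $x^{*}$ is the second least word in $L_{x}$ (Lemma \ref{lem:x-least}(c)), the only word of order $x$ strictly below $x^{*}$ is $x$ itself; applying the augmentation and Corollary \ref{cor:prec_R-compatible-prec-2} to a polynomial in $I$ with leading word $x^{*}$, we may write $x^{*}\in k_{x}x+\sum k_{\omega}\omega+I$ with $o(\omega)\prec x$ for every $\omega$ occurring, which is exactly \eqref{formula:x*}. Now feed this into the relation $xx^{*}+x^{*}x\in 2x+2x^{*}+I$ (adding the two given relations), or more cleanly into $xx^{*}\in x+x^{*}+I$ alone. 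Substituting \eqref{formula:x*} on the left gives $x\cdot(k_{x}x+\sum k_{\omega}\omega)\in k_{x}x^{2}+\sum k_{\omega}x\omega+I$; here each $x\omega$ has $o(x\omega)=o(x)=x$ but $|x\omega|=1+|\omega|\ge 2$, so $x\omega\succ_{\R} x$ and these terms need not be small — this is the crux. The right side is $x+x^{*}+I$, and substituting \eqref{formula:x*} for $x^{*}$ on the right yields $x+k_{x}x+\sum k_{\omega}\omega+I$, i.e. a combination of $x$ and words of order $\prec x$. If $k_{x}=0$, then \eqref{formula:x*} already reads $x^{*}\in\sum k_{\omega}\omega+I$ with all $\omega\prec_{\R} x^{*}$, but we need the conclusion about $x$; instead use the other relation: from $x^{*}x\in x+x^{*}+I$ substitute \eqref{formula:x*} with $k_{x}=0$ on the left to get $\bigl(\sum k_{\omega}\omega\bigr)x\in x+x^{*}+I$, and since $o(\omega x)=o(\omega)\prec x$ the left side is a combination of words $\prec_{\R} x$; substituting \eqref{formula:x*} (again $k_{x}=0$) for the $x^{*}$ on the right, we obtain $x\in\sum(\text{words}\prec_{\R} x)+I$, so $x$ is $I$-reducible. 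If $k_{x}\neq 0$, then from the computation above $k_{x}x^{2}\in x+x^{*}+I-\sum k_{\omega}x\omega$; substitute \eqref{formula:x*} for the $x^{*}$ on the right once more, collect, and divide by $k_{x}\in\K^{*}$ to get $x^{2}\in\sum(\text{lower words})+I$ — but I must check the leading word of the resulting element of $I$ is exactly $x^{2}$. The candidate competitors are the words $x\omega$ (with $o(x\omega)=x$, length $\ge 2$) and $\omega$ (with $o(\omega)\prec x$, hence $\prec_{\R} x^{2}$) and $x,x^{*}$ (both $\prec_{\R} x^{2}$). Among $\{x^{2}\}\cup\{x\omega\}$, note $|x\omega|\ge 2=|x^{2}|$ with equality only if $|\omega|=1$, i.e. $\omega$ is a letter with $o(\omega)\prec x$; then $o(x\omega)=x=o(x^{2})$, $|x\omega|=2=|x^{2}|$, and $x^{2}\prec_{\lex}x\omega$ fails precisely because $\omega\prec x$ forces $x\omega\prec_{\lex}x^{2}$, so $x\omega\prec_{\R} x^{2}$; if $|\omega|\ge 2$ then $|x\omega|>2$ so $x\omega\succ_{\R} x^{2}$, which would be a problem — so here I should first use the relation to rewrite $x\omega$ down (each such $x\omega$, being a word of length $\ge 2$ starting with $x$ whose remaining letters have order $\prec x$, lies already in the span of products of letters $\prec x^{2}$? no — it has the letter $x$). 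The honest fix is to observe $x\omega = x\cdot\omega$ and apply \eqref{formula:x*}-substitution is not available for $x$; instead note that in \eqref{formula:x*} we may take each $\omega$ to be a product of \emph{letters} of order $\prec x$ after invoking Theorem-type reductions, but at this point in the paper that is unavailable, so the cleanest route is: since $o(\omega)\prec x$ and $\omega\in\X^{+}$, Corollary \ref{cor:prec_R-compatible-prec-2} gives $\omega\prec_{\R} x$; hence $x\omega\prec_{\R}\,?$ — no. I would therefore instead argue that the polynomial $k_{x}x^{2}-x-x^{*}+\sum k_{\omega}x\omega+(\text{expansion of }x^{*})$ lies in $I$ and, grouping, rewrite it so that its leading word is $x^{2}$: replace each occurrence of $x^{*}$ by \eqref{formula:x*}; the words $x\omega$ with $|\omega|\ge 2$ do have $x\omega\succ_{\R}x^{2}$, so to conclude "$x^{2}$ is $I$-reducible" I must instead note that multiplying \eqref{formula:x*} on the left by $x$ gives $x\cdot x^{*}\in k_{x}x^{2}+\sum k_{\omega}x\omega+I$ while $x\cdot x^{*}\in x+x^{*}+I$, and then the \emph{leading word} of $xx^{*}-x-x^{*}$ is $xx^{*}$, which is $\succ_{\R}x^{2}$ — so this chain shows $xx^{*}$ is $I$-reducible, not directly $x^{2}$.

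**Expected main obstacle.** The delicate point is precisely part (b) in the case $k_{x}\neq 0$: proving that $x^{2}$ (rather than merely some larger word such as $xx^{*}$ or $x^{*}x$) is $I$-reducible. The resolution I anticipate is to combine \eqref{formula:x*} with \emph{both} relations $xx^{*},x^{*}x\in x+x^{*}+I$ and with the substitution of $x^{*}$ by \eqref{formula:x*} on \emph{all} sides, so that every word of the shape $x\omega$ or $\omega x$ is in turn rewritten: since each such $\omega$ satisfies $o(\omega)\prec x$, one iterates \eqref{formula:x*}-substitution is not legal, but one can instead peel off the leading letter of $\omega$ (which is $\prec x$) and use that letters $\prec x$ are handled by the inductive framework in which this lemma is applied — indeed Lemma \ref{lem:relation-x-x*} is invoked in the proof of Theorem B precisely at the induction step on $\prec$, so one may legitimately assume all letters $\prec x$ are already expressed in terms of $I$-irreducible letters, collapsing every $x\omega$ of length $\ge 2$ into words of order $\preceq_{\R}x$ times powers, and thereby isolating $x^{2}$ as the leading word. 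If that contextual induction is not permitted inside the statement of the lemma itself, the alternative is to prove directly that $(1-x)^{2}\in\K 1+\K(1-x)+I$ using $(1-x)(1-x^{*})\in 1+I$ and the reducibility hypothesis on $1-x^{*}$, translating back to $x^{2}\in\K 1+\K x+\sum(\text{words }\prec_{\R}x)+I$, whose leading word is unambiguously $x^{2}$; I expect this invertible-element computation to be the clean way through, and it is exactly the "treatment of invertible elements" flagged in the introduction.
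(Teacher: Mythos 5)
Your part (a) and your treatment of the case $k_{x}=0$ in part (b) are essentially the paper's argument: expand the reducible letter via Corollary \ref{cor:prec_R-compatible-prec-2}, substitute into one of the two relations, and read off the leading word, using that any word $\omega x^{*}$ or $\omega x$ with $o(\omega)\prec x$ has $o$-value $\prec x$ and is therefore $\prec_{\R}$ every word whose $o$-value is $x$, irrespective of length. Up to that point the proposal is fine.

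However, the case $k_{x}\neq 0$ of part (b) — which you yourself flag as the crux — is a genuine gap: you substitute \eqref{formula:x*} into $xx^{*}\in x+x^{*}+I$, producing terms $x\omega$ with $o(x\omega)=x$ and $|x\omega|\geq 2$, which dominate $x^{2}$ whenever $|\omega|\geq 2$, and none of your proposed escapes (importing the induction from the proof of Theorem \ref{thm:noetherian-pointed-affine}, or an uncarried-out computation with $(1-x)^{2}$) is actually executed. The fix is the one you already used in the $k_{x}=0$ case and then abandoned: substitute \eqref{formula:x*} into the \emph{other} relation $x^{*}x\in x+x^{*}+I$, i.e.\ with $x^{*}$ on the left. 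This yields
\begin{align*}
p=(k_{x}+1)x+\sum k_{\omega}\omega-k_{x}x^{2}-\sum k_{\omega}\omega x\in I,
\end{align*}
where every error term $\omega$ or $\omega x$ satisfies $o(\omega)=o(\omega x)\prec x$ and is therefore $\prec_{\R}x^{2}$ no matter how long it is, because the reduction order compares $o$-values before lengths. The only words in $p$ with $o$-value $x$ are $x$ and $x^{2}$, so $\LW(p)=x^{2}$ when $k_{x}\neq 0$ (and $\LW(p)=x$ when $k_{x}=0$), which is exactly the paper's one-line conclusion. In short: the asymmetry between $xx^{*}$ and $x^{*}x$ under left-substitution of $x^{*}$ is the whole point, and your attempt picks the wrong side in the decisive case.
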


\begin{proof}
Assume $x$ is $I$-reducible. By Corollary \ref{cor:prec_R-compatible-prec-2}, we have $x\in \sum_{} k_{\omega} \omega + I$ for some words $\omega \in \X^{+}$ satisfying $o(\omega)\prec x$, and $k_{\omega} \in \K$.
Since $xx^{*} \in x + x^{*} + I$, there is a polynomial 
\begin{align*}
f= x^{*} + x - \sum_{} k_{\omega} \omega x^{*} \in I, \qquad o(\omega x^{*})\prec x.
\end{align*}
Thus,  $\LW(f) = x^{*}$, and hence $x^{*}$ is $I$-reducible.

We now prove Part(b). By Lemma \ref{lem:x-least} (c) and Corollary \ref{cor:prec_R-compatible-prec-2},   $x^{*}$ admits a representation of the form (\ref{formula:x*}). 
Substituting this into the relation: $x^{*}x \in x + x^{*} +I$, yields the polynomial
\begin{align*}
p = (k_{x} + 1) x + \sum_{} k_{\omega} \omega - k_{x} x^{2} -  \sum_{} k_{\omega} \omega x \in I, \qquad \ol(\omega) = \ol(\omega x) \prec x.
\end{align*}
The leading word of $p$ is then easily determined:  $\LW(p) = x$ if $k_{x} = 0$; and $\LW(p) = x^{2}$ if $k_{x} \neq 0$.
\end{proof}

We now demonstrate that for a pointed Hopf algebra, the residue classes of its irreducible letters (with respect to a suitably chosen generating set) form a generating set. It follows immediately that any right or left Noetherian pointed Hopf algebra is affine.

\begin{theorem}\label{thm:noetherian-pointed-affine}
Let $H$ be a pointed Hopf algebra over $\K$. The following hold:
\begin{itemize}
    \item [(a)] There exits a generating set $X$ of $H$ such that $\pi(X_{I})$ generates $H$ as an algebra, where $\pi: \K\X \rightarrow H$ is the canonical projection and  $I=\ker\pi$.
    \item [(b)] If $H$ is right or left Noetherian, then $H$ is affine.
\end{itemize}
\end{theorem}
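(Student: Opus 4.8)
\emph{The plan.} I would derive (a) as an application of Theorem~\ref{thm:rl-sum-smaller-irrls} and then read off (b) from (a) together with Lemmas~\ref{lem:Noetherian-irrls-finite} and \ref{lem:relation-x-x*}. The substantive task is to exhibit the generating set $X=O\cup D^{*}$ of $H$, the map $t$, and a skew-triangular comultiplication $\Delta$ on $\K\X$ meeting the hypotheses of Theorem~\ref{thm:rl-sum-smaller-irrls}. As indicated before the theorem, I would build $X$ along the coradical filtration of $H$: let $O$ carry a basis of $\ker\epsilon_{H}$ compatible with $\{H_{(n)}\}_{n\ge 0}$, chosen (via Lemma~\ref{lem:corad-pointed-Hopfalg}) so that $f_{X}$ maps the $t$-degree-$0$ letters onto $\{1-g\mid g\in\G(H)\}$; let $D\subseteq O$ be the letters $1-g$ with $g$ of infinite order, $D^{*}$ the mirror letters with $f_{X}(x^{*})=1-g^{-1}$ when $f_{X}(x)=1-g$; set $t(x)=\min\{n\mid f_{X}(x)\in H_{(n)}\}$ (so $t(x^{*})=t(x)=0$ for $x\in D$) and refine the well-order on $O$ so that $t(x)<t(y)\Rightarrow x\prec y$; and define $\Delta$ on each letter $x$ by a chosen lift of $\Delta_{H}(\pi(x))$ to $\K\X\otimes\K\X$, extended multiplicatively, with $\pi:\K\X\to H$ the canonical projection, $I=\ker\pi$ and $\epsilon=\epsilon_{H}\circ\pi$.

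\emph{Checking the hypotheses of Theorem~\ref{thm:rl-sum-smaller-irrls}.} In order: (i) $\epsilon(X)=0$ and $I$ is a proper ideal contained in $\ker\epsilon$, since $\pi(X)\subseteq\ker\epsilon_{H}$ and $\pi$ is onto $H\neq 0$; (ii) $\Delta(I)\subseteq I\otimes\K\X+\K\X\otimes I$, because $(\pi\otimes\pi)\circ\Delta=\Delta_{H}\circ\pi$ (both are algebra maps that agree on $X$), whence $\Delta(\ker\pi)\subseteq\ker(\pi\otimes\pi)=I\otimes\K\X+\K\X\otimes I$; (iii) the invertibility with order control: for $z\in X_{0}$, if $\pi(1-z)$ has finite order $n$ take $c_{z}=(1-z)^{n-1}$ (so $c_{z}(1-z),(1-z)c_{z}\in 1+I$ by (\ref{formula:invertible-relation-finite-1}) and $\ol(m_{c_{z}})=\ol(z^{n-1})=z=\ol(z)$), and if $\pi(1-z)$ has infinite order take $c_{z}=1-z^{*}$ with $z^{*}\in D^{*}$ (so the inclusions hold by (\ref{formula:invertible-relation-infinite-1}) and $\ol(m_{c_{z}})=\ol(z^{*})=z=\ol(z)$); and (iv) $\Delta$ is skew-triangular. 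Step (iv) rests on Lemma~\ref{lem:coproduct-pointed-coalgebra}(a): if $\pi(x)$ lies in ${}^{g}(H_{(n)})^{h}$ then $\Delta_{H}(\pi(x))=g\otimes\pi(x)+\pi(x)\otimes h+v$ with $v$ of coradical degree $<n$ and $(\epsilon_{H}\otimes\id)v=(\id\otimes\epsilon_{H})v=0$; writing $g=1-\pi(z_{x})$ and $\pi(x)\otimes h=\pi(x)\otimes 1-\pi(x)\otimes\pi(z')$ with $z_{x},z'\in X_{0}$, using the two counit identities to normalize both tensor legs of $v$ into $\ker\epsilon_{H}$, and re-expressing the right legs of $v$ through letters of $t$-degree $<n$ (available by the choice of basis of $\ker\epsilon_{H}$), the lift of $\Delta_{H}(\pi(x))$ can be taken of the required form $x\otimes 1+(1-z_{x})\otimes x+\sum x'\otimes x''$ with $x'\in\K\X^{+}$, $x''\in X$, $t(x'')<t(x)$ and, by the refined order, $x''\prec x$.

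\emph{Deducing (a) and (b).} With (i)--(iv) in hand, Theorem~\ref{thm:rl-sum-smaller-irrls} shows that every non-empty word of $\X$ equals modulo $I$ a $\K$-linear combination of products of $I$-irreducible letters; applying $\pi$ and using that $\X$ spans $\K\X$ with $\pi$ surjective, $\pi(X_{I})$ generates $H$ as an algebra, giving (a). For (b), first suppose $H$ is right Noetherian. Since $X,I\subseteq\ker\epsilon$, Lemma~\ref{lem:Noetherian-irrls-finite} makes $X_{I}\cap O$ finite; and since each $x\in D$ satisfies $xx^{*},x^{*}x\in x+x^{*}+I$ by (\ref{formula:invertible-relation-infinite-2}), the contrapositive of Lemma~\ref{lem:relation-x-x*}(a) shows $x^{*}\in X_{I}$ forces $\ol(x^{*})\in X_{I}\cap O$, so $x^{*}\mapsto\ol(x^{*})$ injects $X_{I}\cap D^{*}$ into the finite set $X_{I}\cap O$; hence $X_{I}$ is finite and, by (a), is a finite algebra generating set of $H$, so $H$ is affine. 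If $H$ is left Noetherian, I would apply the right-Noetherian case just established to the opposite Hopf algebra $H^{\mathrm{op}}$ --- which has the same, hence pointed, coalgebra structure, and whose right-Noetherianity is the left-Noetherianity of $H$ --- and conclude that $H^{\mathrm{op}}$, and therefore $H$, is affine, a finite algebra generating set of $H^{\mathrm{op}}$ being one of $H$.

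\emph{Expected main obstacle.} The one genuinely delicate step is (iv). Radford's formula only confines the lower term $v$ to $H_{(n-1)}\otimes H_{(n-1)}$, whereas the skew-triangular shape insists that each right tensor leg be a \emph{single} letter that is at once of strictly smaller $t$-degree and $\prec$-smaller; guaranteeing this forces $X$ to be chosen rich enough to span the relevant layers of $\ker\epsilon_{H}$, carried with a $t$-compatible well-order, and each generator normalized (using the counit identities) so that no trivial tensor-leg of $v$ disturbs the distinguished term $x\otimes 1$. Once the generating set is fixed, the remaining arguments are routine applications of the lemmas already established.
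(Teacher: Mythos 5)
Your proposal is correct and follows essentially the same route as the paper: the same construction of $X=O\cup D^{*}$, $t$, the $t$-refined well-order and the lifted skew-triangular $\Delta$ along the coradical filtration, the same verification of the hypotheses of Theorem~\ref{thm:rl-sum-smaller-irrls} via (\ref{formula:invertible-relation-finite-1}) and (\ref{formula:invertible-relation-infinite-1}), and the same deduction of (b) from Lemmas~\ref{lem:Noetherian-irrls-finite} and~\ref{lem:relation-x-x*}(a). The only deviation is the left Noetherian case, where you pass to $H^{\mathrm{op}}$ --- which is a (pointed) Hopf algebra only because the antipode of a pointed Hopf algebra is bijective, a fact you should cite --- whereas the paper uses $H^{\mathrm{op\,cop}}$ and thereby avoids any appeal to bijectivity of the antipode.
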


\begin{proof}
By Lemmas \ref{lem:corad-pointed-Hopfalg} and \ref{lem:coproduct-pointed-coalgebra} 
, we see that for $0\neq v = 1-u \in H_{(0)}$ with $u\in \G(H)$, 
\begin{align*}
\Delta_{H}(v)= v \otimes 1 + (1-v) \otimes v, \quad \epsilon(v)=0.
\end{align*}
For each $v\in {^{g}(H_{(n)})^{1}}$ with $g\in \G(H)$ and $n\ge 1$, there exist elements $z_{v}\in H_{(0)}$, $v' \in H_{(n-1)}$ and $v'' \in {(H_{(n-1)})^{1}}$ such that $g=1-z_{v}\in \G(H)$, $\epsilon(v')=\epsilon(v'')=0$ and
\begin{align*}
\Delta_{H}(v) &= v \otimes 1 + (1-z_{v}) \otimes v + \sum_{} v' \otimes v''.
\end{align*}
For convenience, we set $H_{(-1)}=0$.  When $v= 1-u\in H_{(0)}$ with $u\in \G(H)$, we also adopt the notation $z_{v}=v$.

Next define
\begin{align*}
G &:= \{1-g \mid g\in \G(H)\}, \\
G_{\infty} &:= \{1-g \mid g\in \G(H) \text{ and } \ord(g)= + \infty\}.
\end{align*}
Clearly, $G_{\infty} \subseteq G \subseteq \ker \epsilon_{H} \cap H_{(0)}$. By Lemma \ref{lem:corad-pointed-Hopfalg}, we may split  $G_{\infty}$ into  two disjoint subsets $G_{\infty}^{+}, G_{\infty}^{-}$ so that $G_{\infty}^{-} = S_{H}(G_{\infty}^{+})$ (e.g., $1-g \in G_{\infty}^{+}$, $1-g^{-1} \in G_{\infty}^{-}$).

Note that the subcoalgebra generated by finitely many elements in $H$ is finite dimensional by \cite[Lemma 2.2.1]{Ra2012}. 
Hence,  we may choose a generating set $X$ of $H$ with the assignment $f_{X}: X \rightarrow H$, a subset $D \subseteq X$, a mirror subset $D^{*} $ and a map $t: X \rightarrow \mathbb{N}$ defined by 
$$t(x) = \min\{n \mid f_{X}(x) \in H_{(n)} \}$$ 
(see Example \ref{example:filtration-t-degree}), such that  
\begin{itemize}
\item $f_{X} (X) \subseteq \ker \epsilon_{H}$ and $f_{X}(X_{0}) = G$; 
\item $D, D^{*} \subseteq X_{0}$, $f_{X}(D) = G_{\infty}^{+}$ and $f_{X}(D^{*}) = G_{\infty}^{-}$; 
\item $\pi: \K\X \rightarrow H$ is the canonical projection induced by $f_{X}$;
\item the coproduct $\Delta_{H}$ lifts to an algebra map $\Delta:\K\X \rightarrow \K\X \otimes \K\X$ satisfying the following property:\\
for every $x\in X$, there exist elements $z_{x} \in X_{0}$, $x'\in \K\X^{+}$ and $x'' \in X$ such that
\begin{align*}
\Delta(x) = x \otimes 1 + (1-z_{x}) \otimes x + \sum_{t(x'')< t(x)}  x_{}' \otimes x'',
\end{align*}
where $\pi(1-z_{x})\in \G(H)$, and $z_{x_{0}} = x_{0} $ for all $x_{0}\in X_{0}$. 
\end{itemize}
Let $I:=\ker \pi$. By construction, $\Delta$ is a comultiplication on $\K\X$ such that 
$$\Delta(I) \subseteq I \otimes \K\X + \K\X \otimes I.$$
 Similarly,  the counit $\epsilon_{H}$ lifts to an algebra map $\epsilon: \K\X \rightarrow \K$ such that $X, I \subseteq \ker \epsilon$.

Let $O:= X\setminus D^{*}$. Then $X = O \cup D^{*}$.
We now define an order $\prec$ on $X$. For each $n\ge 0$, fix a well order $\prec_{n}$ on the set $X_{n} \cap O$. For $x,y\in O$, define
\begin{align*}
x <_{O} y \Longleftrightarrow  
\left\{\begin{array}{l}
t(x)< t(y), \text { or } \\
t(x)= t(y) \text{ and } 
x \prec_{t(x)} y.
\end{array}\right.
\end{align*}
Since $D,D^{*} \subseteq X_{0}$,  we have  $t(x^{*})=t(x)$ for all $x\in D$. \\
The order $\prec$ on $X$ induced by $<_{O}$ is then given as follows:  for $x, y \in X$, 
\begin{align*}
x \prec y \Longleftrightarrow  
\left\{\begin{array}{l}
t(x)< t(y), \text { or } \\
t(x)= t(y) \text{ and } 
\left\{\begin{array}{l}
o(x) \prec_{t(x)} o(y), \text{ or }\\
o(x) = o(y) =x \text{ and } y=x^{*}.
\end{array}\right.
\end{array}\right.
\end{align*}

It is easy to see that $(O,<_{O})$ and $(X,\prec)$ are well-ordered. 
Hence, the induced reduction order $\prec_{\R}$ is a well order on $\X$ by Lemma \ref{lem:reduction-order-well}, and the comultiplication  $\Delta$ is skew-triangular. Moreover, $I$ satisfies the additional assumption of Theorem \ref{thm:rl-sum-smaller-irrls} by Formulas (\ref{formula:invertible-relation-finite-1}) and (\ref{formula:invertible-relation-infinite-1}), since $\pi(1-x)\in \G (H)$  for every $x\in X_{0}$. 
Therefore, Part(a) follows from Theorem \ref{thm:rl-sum-smaller-irrls}. 

Now assume that $H$ is right Noetherian. By Lemma \ref{lem:Noetherian-irrls-finite}, the set $X_{I} \cap O$  is finite, and  by Lemma \ref{lem:relation-x-x*} (a) the set  $X_{I} \cap D^{*}$ is also finite. It follows that $X_{I}$ is finite. Therefore, $H$ is affine by Part(a). If $H$ is left Noetherian, then the Hopf algebra $H^{op~cop}$ is right Noetherian and hence affine. Therefore, $H$ is also affine.
\end{proof}

\begin{remark}\label{rmk:main-theorem-2}
(1) As a consequence,  any right or left Noetherian connected Hopf algebra is affine.
\noindent
(2) With the notation  of Theorem \ref{thm:noetherian-pointed-affine},  Theorem \ref{thm:rl-sum-smaller-irrls} implies that $\pi(X_{0, I})$ generates the coradical $H_{(0)}$ as an algebra, where $X_{0,I} := X_{0} \cap X_{I}$. Hence, if $H$ is  right Noetherian,  then $X_{0,I}$ is finite and therefore $H_{(0)}$ is affine. 
\end{remark}

We now consider the Noetherian and  affine properties of Hopf subalgebras of a Noetherian pointed Hopf algebra. 

Let $H$ be a Hopf algebra over $\K$, and $K$ a Hopf subalgebra of $H$. Suppose that $H_{(0)} \subseteq K$ or $H$ is pointed.
By \cite[Theorem 9.3.1]{Ra2012}, the Hopf algebra $H$  is a free left  $K$-module. Hence, $H$ is faithfully flat over $K$.  If $H$ is right Noetherian, then by \cite[Exercise 17T]{GW2004}, any left faithfully flat Hopf subalgebra of a right Noetherian Hopf algebra is itself right Noetherian. Therefore, $K$ is right Noetherian.  Thus, we obtain the following lemma. 

\begin{lemma}\label{lem:Hopfsubalg-Notherian}
Let $H$ be a Hopf algebra over $\K$ and $K$ a Hopf subalgebra of $H$. Suppose that $H_{(0)} \subseteq K$ or $H$ is pointed. If $H$ is right (resp. left) Noetherian, then $K$ is right (resp. left) Noetherian.
\end{lemma}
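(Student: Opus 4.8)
The plan is to reduce the statement to a clean combination of three facts, each already available either from the cited literature or from the structure theory recalled in Section~\ref{sec:1}. First I would establish the freeness claim: under the hypothesis $H_{(0)}\subseteq K$ or $H$ pointed, the Hopf algebra $H$ is a free left $K$-module. In the pointed case this is precisely the content of the Nichols--Zoeller-type result quoted as \cite[Theorem 9.3.1]{Ra2012}; in the case $H_{(0)}\subseteq K$ one invokes the same theorem, since the hypothesis on the coradical is exactly what makes that freeness theorem applicable. The only thing to check here is that our two hypotheses are each covered by the statement of \cite[Theorem 9.3.1]{Ra2012}, which they are by construction of that theorem's assumptions.

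Second, freeness as a left $K$-module immediately gives that $H$ is faithfully flat as a left $K$-module (a free module is faithfully flat). Third, I would apply \cite[Exercise 17T]{GW2004}: a Hopf subalgebra $K$ over which $H$ is left faithfully flat inherits the right Noetherian property from $H$. Chaining these three steps yields that $K$ is right Noetherian whenever $H$ is. For the left Noetherian case, I would pass to the opposite algebra: $H^{\mathrm{op}}$ is a Hopf algebra containing $K^{\mathrm{op}}$ as a Hopf subalgebra, $H^{\mathrm{op}}$ is right Noetherian precisely when $H$ is left Noetherian, and the coradical/pointedness hypotheses are preserved under $(-)^{\mathrm{op}}$; applying the right-handed case to $H^{\mathrm{op}}$ gives that $K^{\mathrm{op}}$, hence $K$, is left Noetherian. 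Alternatively one notes that \cite[Theorem 9.3.1]{Ra2012} equally produces a free \emph{right} $K$-module structure on $H$, so the symmetric version of \cite[Exercise 17T]{GW2004} applies directly.

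I do not anticipate a genuine obstacle here: the lemma is essentially a bookkeeping assembly of known results, and the only mild subtlety is making sure the hypotheses ``$H_{(0)}\subseteq K$ or $H$ pointed'' match exactly what \cite[Theorem 9.3.1]{Ra2012} requires for the freeness conclusion, and that this matching is stable under passing to $H^{\mathrm{op}}$. Since the excerpt already states the proof inline immediately before the lemma, the write-up amounts to recording these citations in order; no new computation is needed.
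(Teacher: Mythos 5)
Your proposal follows essentially the same route as the paper: freeness of $H$ over $K$ via \cite[Theorem 9.3.1]{Ra2012} under the hypothesis ``$H_{(0)}\subseteq K$ or $H$ pointed,'' hence faithful flatness, and then \cite[Exercise 17T]{GW2004} to transfer the right Noetherian property to $K$, with the left case obtained by a symmetric/opposite argument. The only minor caveat is that $H^{\mathrm{op}}$ is a Hopf algebra only when the antipode is bijective, so it is slightly cleaner to pass to $H^{\mathrm{op\,cop}}$ (as the paper does elsewhere) or to use the right-module freeness, but this does not affect the correctness of your argument.
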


Lemma \ref{lem:Hopfsubalg-Notherian} provides a partial answer to Andruskiewitsch’s question \cite[Question 8]{A2023} on whether every Hopf subalgebra of a Noetherian Hopf algebra is itself Noetherian.

As a  consequence, every Hopf subalgebra of a Noetherian pointed Hopf algebra is affine.
\begin{corollary}\label{cor:Hopfsubalg-affine}
Let $H$ be a pointed Hopf algebra over $\K$. If $H$ is right or left Noetherian, then every Hopf subalgebra of $H$ is affine, or equivalently,  every ascending chain of Hopf subalgebras of $H$ stabilizes.
\end{corollary}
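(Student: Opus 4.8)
The plan is to combine the Noetherian-descent lemma for faithfully flat Hopf subalgebras (Lemma \ref{lem:Hopfsubalg-Notherian}) with the main affineness result (Theorem \ref{thm:noetherian-pointed-affine}), noting that a Hopf subalgebra of a pointed Hopf algebra is again pointed. Concretely, let $H$ be a right or left Noetherian pointed Hopf algebra and let $K \subseteq H$ be any Hopf subalgebra. First I would observe that $K$ is pointed: the coradical of $K$ is contained in $\corad(H) = \K\G(H)$, and since $\corad(K)$ is a subcoalgebra of $H$ spanned by simple subcoalgebras, each such simple subcoalgebra lands inside $\K\G(H)$ and is therefore one-dimensional, spanned by a group-like element of $H$ that necessarily lies in $K$; hence $\corad(K) = \K\G(K)$. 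Next, Lemma \ref{lem:Hopfsubalg-Notherian} (with the hypothesis ``$H$ is pointed'') shows that $K$ inherits the right (resp. left) Noetherian property from $H$ via faithful flatness. Finally, applying Theorem \ref{thm:noetherian-pointed-affine}(b) to the Noetherian pointed Hopf algebra $K$ yields that $K$ is affine.

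For the equivalence with the ascending chain condition on Hopf subalgebras: if every Hopf subalgebra of $H$ is affine and $K_1 \subseteq K_2 \subseteq \cdots$ is an ascending chain of Hopf subalgebras, set $K = \bigcup_i K_i$, which is again a Hopf subalgebra of $H$ (the union of a chain of sub-bialgebras closed under the antipode is such). By the first part, $K$ is affine, say generated by $a_1, \dots, a_r$; since the chain is ascending, all generators lie in some $K_N$, whence $K = K_N$ and the chain stabilizes. Conversely, if every ascending chain of Hopf subalgebras stabilizes, then in particular for any Hopf subalgebra $K$ the chain of Hopf subalgebras generated by finite subsets of a fixed generating family exhausting $K$ must stabilize, forcing $K$ to be finitely generated as an algebra, i.e. affine. (One must be slightly careful here: the natural objects are Hopf subalgebras generated by finite subsets, and for this one uses that a finitely generated subalgebra of a Hopf algebra is contained in a finitely generated Hopf subalgebra, which follows from \cite[Lemma 2.2.1]{Ra2012} applied coordinate-wise together with closure under the antipode.)

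The only genuine subtlety — and the step I would present most carefully — is verifying that a Hopf subalgebra of a pointed Hopf algebra is pointed, together with checking that Lemma \ref{lem:Hopfsubalg-Notherian} is stated precisely for the ``$H$ pointed'' branch of its hypothesis rather than only the ``$H_{(0)} \subseteq K$'' branch; both are listed, so this is a matter of citing the correct alternative. Everything else is a direct chain of implications, and the proof is short:
\begin{proof}
Let $K$ be a Hopf subalgebra of $H$. Since $H$ is pointed, Lemma \ref{lem:Hopfsubalg-Notherian} shows that $K$ is right (resp. left) Noetherian. Moreover $K$ is pointed: any simple subcoalgebra of $K$ is a simple subcoalgebra of $H$, hence contained in $\corad(H) = \K\G(H)$ and thus one-dimensional and spanned by a group-like element lying in $K$; therefore $\corad(K) = \K\G(K)$. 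Applying Theorem \ref{thm:noetherian-pointed-affine}(b) to the right (resp. left) Noetherian pointed Hopf algebra $K$ shows that $K$ is affine. This proves that every Hopf subalgebra of $H$ is affine. For the stated equivalence, if every Hopf subalgebra is affine and $K_{1}\subseteq K_{2}\subseteq\cdots$ is an ascending chain of Hopf subalgebras, then $K:=\bigcup_{i}K_{i}$ is a Hopf subalgebra, hence affine; a finite generating set of $K$ lies in some $K_{N}$, so $K=K_{N}$ and the chain stabilizes. Conversely, if every ascending chain of Hopf subalgebras stabilizes and $K$ is a Hopf subalgebra, write $K$ as the union of the directed family of Hopf subalgebras generated by the finite subsets of a fixed generating set (each such subalgebra is finite-dimensional over its finitely generated sub-bialgebras by \cite[Lemma 2.2.1]{Ra2012} and closure under the antipode); stabilization of this family forces $K$ to be finitely generated, i.e. affine.
\end{proof}
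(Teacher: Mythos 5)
Your argument is correct and takes essentially the same route as the paper: the paper's proof is exactly the one-line combination of Lemma \ref{lem:Hopfsubalg-Notherian} (the ``$H$ pointed'' branch) with Theorem \ref{thm:noetherian-pointed-affine}(b), and your added verifications that a Hopf subalgebra $K$ of a pointed Hopf algebra is again pointed (so that Theorem \ref{thm:noetherian-pointed-affine} applies to $K$) and that affineness of all Hopf subalgebras yields the ascending chain condition via the union of a chain are both correct and worth making explicit. One caveat: in the converse direction of the equivalence you invoke the claim that every finite subset of a Hopf algebra lies in an affine Hopf subalgebra, justified by \cite[Lemma 2.2.1]{Ra2012} ``together with closure under the antipode''; this is not a consequence of that lemma, since the finite-dimensional subcoalgebra $C$ it provides need not have $\sum_{n\ge 0} S^{n}(C)$ finite-dimensional, and indeed this ``locally affine'' property is treated as a genuine hypothesis in \cite{GZ2017} rather than an automatic fact. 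Fortunately that direction is not needed here: under the corollary's hypotheses the statement ``every Hopf subalgebra of $H$ is affine'' is established outright, so the equivalence with the chain condition reduces to the direction you proved correctly (affine $\Rightarrow$ stabilization); I would either drop the converse or restrict it to the present setting, where it is immediate.
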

\begin{proof}
The proof follows from Lemma \ref{lem:Hopfsubalg-Notherian} and Theorem \ref{thm:noetherian-pointed-affine}.
\end{proof}

\begin{remark}\label{rmk:Hopfsubalg-affine}
This corollary generalizes a classical phenomenon from group and Lie theory:
if the group algebra of a group (respectively, the universal enveloping algebra of a Lie algebra) is Noetherian, then the group (respectively, the Lie algebra) satisfies the ACC on subgroups (respectively, subalgebras), or equivalently, all of its subgroups (respectively, subalgebras) are finitely generated.

Similarly, Corollary~\ref{cor:Hopfsubalg-affine} shows that if a pointed Hopf algebra 
$H$ is Noetherian, then every Hopf subalgebra of $H$ is affine. In particular, a pointed Hopf algebra cannot be Noetherian if it contains an infinitely generated Hopf subalgebra.
\end{remark}

\bibliographystyle{plain}

\end{document}